\documentclass[11pt]{article}
\usepackage[T1]{fontenc}
\usepackage{amsfonts}
\usepackage{amsmath}
\usepackage{amssymb}
\usepackage{amsthm}
\usepackage{bbm}
\usepackage{bm}
\usepackage{mathrsfs}
\usepackage{verbatim}
\usepackage{setspace}
\usepackage{color}
\usepackage{pdfsync}
\usepackage{enumitem}

\theoremstyle{plain}
\newtheorem{theorem}{Theorem}[section]
\newtheorem{proposition}[theorem]{Proposition}
\newtheorem{lemma}[theorem]{Lemma}
\newtheorem{corollary}[theorem]{Corollary}

\theoremstyle{definition}
\newtheorem{definition}[theorem]{Definition}
\newtheorem{remark}[theorem]{Remark}

\theoremstyle{remark}
%\newtheorem{claim}[theorem]{Claim}
%\newtheorem*{acknowledgements}{Acknowledgements}

%\renewcommand{\labelenumii}{(enumii)}

%%%%%%%%%%%%% Bibliography %%%%%%%%%%%%%%%%%%%%%%

\renewenvironment{thebibliography}[1]{%
\begin{oldthebibliography}{#1}%
\setlength{\baselineskip}{.9em}
\linespread{1}
\small
\setlength{\parskip}{0.2ex}%
\setlength{\itemsep}{.30em}%
}%
{%
\end{oldthebibliography}%
}
%%%%%%%%%%%%%%%%%%%%%%%%%%%%%%%%%%%%

%math spacing

%math symbols
\newcommand{\eps}{\varepsilon}

\newcommand{\F}{\mathbb{F}}
\newcommand{\G}{\mathbb{G}}

\newcommand{\N}{\mathbb{N}}
\newcommand{\Q}{\mathbb{Q}}
\newcommand{\R}{\mathbb{R}}
\renewcommand{\S}{\mathbb{S}}

\newcommand{\cA}{\mathcal{A}}
\newcommand{\cB}{\mathcal{B}}

\newcommand{\cE}{\mathcal{E}}
\newcommand{\cF}{\mathcal{F}}
\newcommand{\cG}{\mathcal{G}}

\newcommand{\cL}{\mathcal{L}}

\newcommand{\cN}{\mathcal{N}}

\newcommand{\cP}{\mathcal{P}}

\newcommand{\fP}{\mathfrak{P}}
\newcommand{\fM}{\mathfrak{M}}

\DeclareMathOperator{\Var}{Var}
\DeclareMathOperator{\conv}{conv}

\newcommand{\as}{\mbox{-a.s.}}

\newcommand{\1}{\mathbf{1}}

\numberwithin{equation}{section}

%%%%%%%%%%%%%%%%%%%%%%%%%%
\usepackage[pdfborder={0 0 0}]{hyperref}
\hypersetup{
  urlcolor = black,
  pdfauthor = {Ariel Neufeld, Marcel Nutz},
  pdfkeywords = {Semimartingale Characteristics;  Semimartingale Property; Doob--Meyer Decomposition},
  pdftitle = {Measurability of Semimartingale Characteristics with Respect to the Probability Law},
  pdfsubject = {Measurability of Semimartingale Characteristics with Respect to the Probability Law},
  pdfpagemode = UseNone
}
%%%%%%%%%%%%%%%%%%%%%%%%%%

%***************************
%\newcommand{\F}{\mathcal{F}}
%\newcommand{\F}{\mathbb{F}}
\renewcommand{\P}{\mathcal{P}}
\newcommand{\B}{\mathcal{B}}
%*****************************

%\documentclass[12pt]{article}
%\usepackage{geometry}                % See geometry.pdf to learn the layout options. There are lots.
%\geometry{a4paper}                   % ... or letterpaper or a5paper or ...
%%\geometry{landscape}                % Activate for rotated page geometry
%%\usepackage[parfill]{parskip}    % Activate to begin paragraphs with an empty line rather than an indent
%\usepackage{amsmath, amsthm, amssymb}
%
%\newcommand{\P}{\mathcal{P}}
%\newcommand{\E}{\mathbb{E}}
%
%\theoremstyle{definition}
%\newtheorem{definition}{Definition} %\numberwithin{definition}{section}%NEU %sandwich[theorem]
%\newtheorem{assumption}[definition]{Assumption}
%\newtheorem{remark}[definition]{Remark}
%\newtheorem{notation}[definition]{Notation}
%%\newtheorem{theorem}[definition]{Theorem}
%\newtheorem{example}[definition]{Example}
%\newtheorem*{conditionA}{Condition~(A)}
%
%\theoremstyle{plain}
%%\newtheorem{theorem}[definition]{Theorem}%\numberwithin{theorem}{section}
%\newtheorem{lemma}[definition]{Lemma}
%%\newtheorem{definition}[theorem]{Definition}
%\newtheorem{proposition}[definition]{Proposition}
%%\newtheorem{remark}[definition]{Remark}
%\newtheorem{corollary}[definition]{Corollary}
%%
%\newtheorem{theorem}[definition]{Theorem}
%%
%%\newtheorem{example}[definition]{Example}
%\newtheorem{assumption}[theorem]{Assumption}
%\newtheorem{notation}[definition]{Notation}
%\DeclareMathOperator{\esssup}{ess\,\sup}

\begin{document}

\title{\vspace{-0em}
Measurability of Semimartingale Characteristics with Respect to the Probability Law
\date{\today}
\author{
  Ariel Neufeld%
  \thanks{
  Dept.\ of Mathematics, ETH Zurich, \texttt{ariel.neufeld@math.ethz.ch}.
  Financial support by Swiss National Science Foundation Grant PDFMP2-137147/1 is gratefully acknowledged.
  }
  \and
  Marcel Nutz%
  \thanks{
  Dept.\ of Mathematics, Columbia University, New York, \texttt{mnutz@math.columbia.edu}. Financial support by NSF Grant DMS-1208985 is gratefully acknowledged.
  }
 }
}
\maketitle \vspace{-1em}

\begin{abstract}
Given a c\`adl\`ag process $X$ on a filtered measurable space, we construct a version of its semimartingale characteristics which is measurable with respect to the underlying probability law. More precisely, let $\mathfrak{P}_{sem}$ be the set of all probability measures $P$ under which $X$ is a semimartingale. We construct processes $(B^P,C,\nu^P)$ which are jointly measurable in time, space, and the probability law $P$, and are versions of the semimartingale characteristics of $X$ under $P$ for each $P\in\mathfrak{P}_{sem}$. This result gives a general and unifying answer to measurability questions that arise in the context of quasi-sure analysis and stochastic control under the weak formulation.
\end{abstract}

\vspace{.9em}

{\small
\noindent \emph{Keywords} Semimartingale characteristics;  Semimartingale property; Doob--Meyer decomposition

\noindent \emph{AMS 2010 Subject Classification}
60G44; %Martingales with continuous parameter
93E20 %Optimal stochastic control
}

\section{Introduction}\label{se:intro}

We study the measurability of semimartingale characteristics with respect to the probability law. For the purpose of this introduction, consider the coordinate-mapping process $X$ on the Skorohod space $\Omega=D[0,\infty)$; that is, the set of right-continuous paths with left limits. If $P$ is a law on $\Omega$ such that $X$ is a $P$-semimartingale, we can consider the corresponding triplet $(B^P,C^P,\nu^P)$ of predictable semimartingale characteristics. Roughly speaking, $B^P$ describes the drift, $C^P$ the continuous diffusion, and $\nu^P$ the jumps of $X$. This triplet depends on $P$ and is defined $P$-almost surely; for instance, if $P'$ is equivalent to $P$, the characteristics under $P'$ are in general different from the ones under $P$, whereas if $P$ and $P'$ are singular, it is a priori meaningless to compare the characteristics. In standard situations of stochastic analysis, the characteristics are considered under a fixed probability, or one describes their transformation under an absolutely continuous change of measure as in Girsanov's theorem.

There are, however, numerous applications of stochastic analysis and dynamic programming where we work with a large set $\fP$ of semimartingale laws, often mutually singular.
For instance, when considering a standard stochastic control problem based on a controlled stochastic differential equation, it is useful to recast the problem on Skorohod space by taking $\fP$ to be the set of all laws of solutions of the controlled equation; see e.g.\ \cite{Nutz.11}. This so-called weak formulation of the control problem is advantageous because the Skorohod space has a convenient topological structure; in fact, control problems are often stated directly in this form (cf.\ \cite{ElKaroui.81,Elliott.82} among many others). A similar weak formulation exists in the context of stochastic differential games; here this choice is even more important as the existence of a value may depend on the formulation; see~\cite{PhamZhang.12, Sirbu.13} and the references therein. Or, in the context of a nonlinear expectation $\cE(\cdot)$, the set $\fP$ of all measures $P$ such that $E_P[\cdot]\leq\cE(\cdot)$ plays an important role; see \cite{NutzVanHandel.12, Peng.10, Peng.10icm}. For instance,  the set of all laws of continuous semimartingales whose drift and diffusion coefficients satisfy given bounds is related to $G$-Brownian motion. Other examples where sets of semimartingale laws play a role are path-dependent PDEs~\cite{EkrenTouziZhang.12}, robust superhedging as in~\cite{NeufeldNutz.12, PossamaiRoyerTouzi.13} or nonlinear optimal stopping problems as in~\cite{NutzZhang.12}, to name but a few.
It is well known that the dynamic programming principle is delicate as soon as the regularity of the value function is not known a priori; this is often the case when the reward/cost function is discontinuous or in the presence of state constraints. In this situation, the measurability of the set of controls is crucial to establish the dynamic programming and the measurability of the value function; see \cite{ElKarouiTan.13a, NutzVanHandel.12, Zitkovic.13} for recent developments related to the present paper.
%Once dynamic programming is in place, the comparison principle of viscosity solutions can often be used to enhance the regularity of the value function.

As a guiding example, let us consider the set $\fP$ that occurs in the probabilistic construction of nonlinear L\'evy processes \cite{NeufeldNutz.13b} and which was our initial motivation. The starting point is a collection $\Theta\subseteq \R^d\times \S^d_+\times \cL$, where $\cL$ denotes the set of L\'evy measures; the collection plays the role of a generalized L\'evy triplet since the case of a singleton corresponds to a classical L\'evy process with corresponding triplet. In this application, the set $\fP$ of interest consists of all laws of semimartingales whose differential characteristics take values in $\Theta$, and since a dynamic programming principle is crucial to the theory, we need to establish the measurability of $\fP$. After a moment's reflection, we see that the fundamental question underlying such issues is \emph{the measurability of the characteristics as a function of the law} $P$; indeed, $\fP$ is essentially the preimage of $\Theta$ under the mapping which associates to $P$ the characteristics of $X$ under $P$.
There are of course many situations where a set $\fP$ is specified not only in terms of semimartingale characteristics but with additional conditions whose form is specific to the problem at hand (e.g.\ \cite{NeufeldNutz.12}). However, in view of the fact that intersections of measurable sets are measurable, it makes sense to analyze in general the measurability of the characteristics and check other conditions on a case-by-case basis. Moreover, let us mention that the set $\fP$ often fails to be closed (e.g., because pure jump processes can converge to a continuous diffusion), so that it is indeed natural to examine the measurability directly.

Our main result (Theorem~\ref{th:charactMbl}) states that the set $\fP_{sem}$ of all semimartingale laws is Borel-measurable and that there exists a Borel-measurable map
\[
  \fP_{sem}\times \Omega\times \R_+ \to \R^d\times \S^d_+ \times \overline{\cL},\quad (P,\omega,t)\mapsto (B^P_t(\omega),C_t(\omega),\nu^P(\omega))
\]
such that $(B^P,C,\nu^P)$ are $P$-semimartingale characteristics of $X$ for each $P\in\fP_{sem}$, where $\overline{\cL}$ is the space of L\'evy measures on $\R_+\times \R^d$. A similar result is obtained for the differential characteristics (Theorem~\ref{th:diffCharactMbl}). The second characteristic $C$ can be constructed as a single process not depending on~$P$; roughly speaking, this is possible because two measures under which $X$ has different diffusion are necessarily singular. By contrast, the first and the third characteristic have to depend on $P$ as they are predictable compensators.

Our construction of the characteristics proceeds through versions of the classical results on the structure of semimartingales, such as the Doob--Meyer theorem, with an additional measurable dependence on the law $P$. This situation is somewhat unusual because the objects of interest are probabilistic in nature and at the same time the underlying measure $P$ itself plays the role of the measurable parameter; we are not aware of a similar problem in the literature. The starting point is that for discrete-time processes, the Doob decomposition can be constructed explicitly and of course all adapted processes are semimartingales. Thus, the passage to the continuous-time limit is the main obstacle, just like in the classical theory of semimartingales. A variety of related compactness arguments have emerged over the years; for our purposes, we have found the recent proofs of \cite{BeiglbockSchachermayerVeliyev.11, BeiglbockSchachermayerVeliyev.12} for the Doob--Meyer and the Bichteler--Dellacherie theorem to be particularly useful as they are built around a compactness argument for which we can provide a measurable version.

The remainder of this paper is organized as follows. In Section~\ref{se:mainResults}, we describe the setting and terminology in some detail (mainly because we cannot work with the ``usual assumptions'') and proceed to state the main results. Section~\ref{se:auxResults} contains some auxiliary results, in particular a version of Alaoglu's theorem for~$L^2(P)$ which allows to choose convergent subsequences that depend measurably on~$P$. The measurability of the set of all semimartingale laws is proved in Section~\ref{se:semimartPropertyAndPsem}. In Section~\ref{se:DoobMeyer}, we show that the Doob--Meyer decomposition can be chosen to be measurable with respect to~$P$ and deduce corresponding results for the compensator of a process with integrable variation and the canonical decomposition of a bounded semimartingale. Using these tools, the jointly measurable version of the characteristics is constructed in Section~\ref{se:characteristics}, whereas the corresponding results for the differential characteristics are obtained in the concluding Section~\ref{se:diffCharacteristics}.

\section{Main Results}\label{se:mainResults}

\subsection{Basic Definitions and Notation}

Let $(\Omega,\cF)$ be a measurable space and let $\F=(\cF_t)_{t\geq0}$ be a filtration of sub-$\sigma$-fields of $\cF$. A process $X=(X_t)$ is called right-continuous if \emph{all} its paths are right-continuous. In the presence of a probability measure $P$, we shall say that $X$ is $P$-a.s.\ right-continuous if $P$-almost all paths are right-continuous; the same convention is used for other path properties such as being c\`adl\`ag, of finite variation, etc.

We denote by $\F_{+}:=(\cF_{t+})$ the right-continuous version of $\F$, defined by $\cF_{t+}= \cap_{u>t}\cF_u$. Similarly, the left-continuous version is $\F_-=(\cF_{t-})$. For $t=0$, we use the convention $\cF_{0-}=\cF_{(0+)-}=\{\emptyset,\Omega\}$. As a result, the predictable $\sigma$-field $\cP$ of $\F$ on $\Omega\times\R_+$, generated by the $\F_-$-adapted processes which are left-continuous on $(0,\infty)$, coincides with the predictable $\sigma$-field of $\F_+$; this fact will be used repeatedly without further mention.
Given a probability measure $P$, the augmentation $\F_+^P=(\cF^P_{t+})$ of $\F_+$, also called the usual augmentation of $\F$, is obtained by adjoining all $P$-nullsets of $(\Omega,\cF)$ to $\cF_{t+}$ for all $t$, including $t=0-$. The corresponding predictable $\sigma$-field will be denoted by $\cP^P$.

Finally, $\fP(\Omega)$ is the set of all probability measures on $(\Omega,\cF)$. In most of this paper, $\Omega$ will be a separable metric space and $\cF$ its Borel $\sigma$-field. In this case,
$\fP(\Omega)$ is a separable metric space for the weak convergence of probability measures and its Borel $\sigma$-field $\cB(\fP(\Omega))$ coincides with the one generated by the maps $P\mapsto P(A)$, $A\in\cF$. Unless otherwise mentioned, any metric space is equipped with its Borel $\sigma$-field. Similarly, product spaces are always equipped with their product $\sigma$-fields and measurability then refers to joint measurability.

It will be convenient to define the integral of any (appropriately measurable) function $f$ taking values in the extended real line $\overline{\R}=[-\infty,\infty]$, regardless of its integrability. For instance, the expectation under a probability measure $P$ is defined by $E^P[f]:= E^P[f^+]- E^P[f^-]$; here and everywhere else, the convention
\[
  \infty - \infty = -\infty
\]
is used. Similarly, conditional expectations are also defined for $\overline{\R}$-valued functions.

\begin{definition}\label{def:semimartingale}
  Let $(\Omega,\cG,\G,P)$ be a filtered probability space. A  $\mathbb{G}$-adapted stochastic process $X: \Omega\times \R_+ \to \R^d$ with c\`adl\`ag paths is a $P$-$\G$-semimartingale if there exist right-continuous, $\mathbb{G}$-adapted processes $M$ and $A$ with $M_0=A_0=0$ such that $M$ is a $P$-$\G$-local martingale, $A$ has paths of (locally) finite variation $P$-a.s., and
  \begin{equation*}
    X= X_0 + M + A \ \ P\mbox{-a.s.}
  \end{equation*}
\end{definition}

The dimension $d\in\N$ is fixed throughout. Fix also a truncation function $h: \R^d\to \R^d$; that is, a bounded measurable function such that $h(x)=x$ in a neighborhood of the origin. The characteristics of a semimartingale $X$ on $(\Omega,\cG,\G,P)$ are a triplet $(B,C,\nu)$ of processes defined as follows. First, consider the c\`adl\`ag process
\begin{equation*}
  \widetilde{X}_t:= X_t-X_0-\sum_{0\leq s \leq t} \big(\Delta X_s- h(\Delta X_s)\big),
\end{equation*}
which has bounded jumps. This process has a ($P$-a.s.\ unique) canonical decomposition $\widetilde{X}=M'+B'$, where $M'$ and $B'$ have the same properties as the processes in Definition~\ref{def:semimartingale}, but in addition $B'$ is predictable. (See \cite[Theorem~7.2.6, p.\,160]{vonWeizsackerWinkler.90} for the existence of the canonical decomposition in a general filtration.) Moreover, let $\mu^X$ be the integer-valued random measure associated with the jumps of $X$,
\[
  \mu^X(\omega,dt,dx)=\sum_{s\geq0} \1_{\{\Delta X_s(\omega)\neq0\}} \1_{(s,\Delta X_s(\omega))}(dt,dx).
\]
Processes $(B,C,\nu)$ with values in $\R^d$, $\R^{d\times d}$, and the set of measures on $\R_+\times\R^d$, respectively, will be called characteristics of $X$ (relative to $h$) if $B=B'$ $P$-a.s., $C$ equals the predictable covariation process of the continuous local martingale part of $M'$ $P$-a.s., and $\nu$ equals the predictable compensator of $\mu^X$  $P$-a.s. All these notions are relative to the given filtration $\G$ which, in the sequel, will be either the basic filtration $\F$, its right-continuous version $\F_+$, or its usual augmentation $\F^P_+$. Our first observation is that the characteristics do not depend on this choice.

\begin{proposition}\label{prop:SemFunab}
  Let $X$ be a c\`adl\`ag, $\R^d$-valued, $\F$-adapted process on a filtered probability space $(\Omega,\cF,\F,P)$. The following are equivalent:
  \begin{enumerate}[topsep=3pt, partopsep=0pt, itemsep=2pt,parsep=2pt]
    \item $X$ is an $\F$-semimartingale,
    \item $X$ is an $\F_+$-semimartingale,
    \item $X$ is an $\F^P_+$-semimartingale.
  \end{enumerate}
  Moreover, the semimartingale characteristics associated with these filtrations are the same.
\end{proposition}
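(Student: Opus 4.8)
The plan is to prove the cycle of implications (i) $\Rightarrow$ (ii) $\Rightarrow$ (iii) $\Rightarrow$ (i) and then to check that the characteristics built in any of the three filtrations agree $P$-a.s.; the entire difficulty is concentrated in transferring the \emph{local martingale} part of a decomposition to a coarser filtration, since the finite-variation part and all the pathwise objects cause no trouble. For the two upward directions I would first record that a c\`adl\`ag $P$-$\F$-martingale $M$ is automatically a $P$-$\F_+$-martingale: for $s<t$ and $u\in(s,t]$ one has $E^P[M_t\mid\cF_u]=M_u$, and letting $u\downarrow s$ the backward convergence of conditional expectations along the decreasing family $(\cF_u)_{u\downarrow s}$ together with right-continuity of the paths gives $E^P[M_t\mid\cF_{s+}]=M_s$, which is already $\cF_{s+}$-measurable. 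Since every $\F$-stopping time is an $\F_+$-stopping time and the finite-variation part stays adapted, this yields (i) $\Rightarrow$ (ii). For (ii) $\Rightarrow$ (iii) I would use that passing to the usual augmentation changes neither the conditional expectations of $\cF$-measurable random variables (they agree $P$-a.s., only nullsets being added) nor the stopping-time structure up to $P$-nullsets, so local martingales and adapted finite-variation processes persist under $\F_+^P$.

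The substance is the downward implication (iii) $\Rightarrow$ (i), where I would exploit that $X$ is $\F$-adapted. Working under $\F_+^P$, take the canonical decomposition $\widetilde{X}=M'+B'$ with $B'$ being $\cP^P$-measurable. Because $\cP^P$ is the $P$-completion of the predictable $\sigma$-field $\cP$, which is common to $\F$ and $\F_+$, I can replace $B'$ by a $\cP$-measurable (hence $\F$-predictable, hence $\F$-adapted) process $\bar{B}$ agreeing with $B'$ up to a $P$-evanescent set. Then $\bar{M}:=\widetilde{X}-\bar{B}$ is $\F$-adapted and coincides $P$-a.s.\ with the $\F_+^P$-local martingale $M'$. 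The crux is the lemma: \emph{an $\F$-adapted c\`adl\`ag process that is a $P$-$\F_+^P$-local martingale is a $P$-$\F$-local martingale.} Granting it, $\bar{M}$ is an $\F$-local martingale, $\bar{A}:=\bar{B}+\sum_{0\leq s\leq\cdot}(\Delta X_s-h(\Delta X_s))$ is $\F$-adapted and of finite variation, and $X=X_0+\bar{M}+\bar{A}$ exhibits $X$ as an $\F$-semimartingale.

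I expect the lemma to be the main obstacle, precisely because the usual conditions are not available and $\F$-stopping times are strictly scarcer than $\F_+$-stopping times. My plan for it is two-step. First, remove the augmentation: an $\F_+$-adapted $\F_+^P$-local martingale is an $\F_+$-local martingale, using that each $\F_+^P$-stopping time equals an $\F_+$-stopping time $P$-a.s.\ and that conditional expectations agree $P$-a.s. Second, and this is the delicate part, descend from $\F_+$ to $\F$: for a true $\F_+$-martingale the tower property $E^P[\,\cdot\mid\cF_s]=E^P[E^P[\,\cdot\mid\cF_{s+}]\mid\cF_s]$ yields the $\F$-martingale property as soon as the random variable in question is $\cF_s$-measurable. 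To localize a c\`adl\`ag $\F_+$-local martingale $N$, I would stop by $\rho=T_n\wedge\tau_k$, where $T_n=\inf\{t:|N_t|>n\}$ are $\F_+$-stopping times (hitting times of an open set by a right-continuous process) and $\tau_k$ is a reducing sequence, so that $N^{\rho}$ is a uniformly integrable $\F_+$-martingale; the key observation is that $N_{s\wedge\rho}$ is in fact $\cF_s$-measurable—on $\{\rho<s\}$ it is the value of a stopping time strictly before $s$, on $\{\rho\geq s\}$ it equals $N_s$—so the tower property makes $N^{\rho}$ an $\F$-martingale. A dyadic approximation of $\rho$ from above then upgrades these $\F_+$-localizing times to genuine $\F$-stopping times, completing the descent.

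Finally, for the equality of the characteristics I would note that $\widetilde{X}$ and the jump measure $\mu^X$ are pathwise functionals of $X$ and hence filtration-independent, that $B$ is the predictable part of the (essentially unique) canonical decomposition and lives in the common predictable $\sigma$-field, and that $C=\langle (M')^c\rangle$ and $\nu$ are predictable objects characterized by a compensation property. Since the three predictable $\sigma$-fields coincide up to $P$-nullsets and the associated conditional expectations agree $P$-a.s., the uniqueness of these predictable objects forces $B^\F=B^{\F_+}=B^{\F_+^P}$, a single common $C$, and a single common $\nu$, all $P$-a.s., which is the asserted coincidence.
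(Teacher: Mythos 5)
Your reduction to the key lemma, the tower-property argument showing that $N_{s\wedge\rho}$ is $\cF_s$-measurable for an $\F_+$-stopping time $\rho$ and $\F$-progressive $N$ (hence that $N^\rho$ is an $\F$-martingale whenever it is an $\F_+$-martingale), and the identification of the characteristics are all sound and parallel the paper. The genuine gap is the very last step of your key lemma: upgrading the localizing times. Your times $\rho=T_n\wedge\tau_k$ are only $\F_+$-stopping times (indeed $\tau_k$ may only be an $\F_+^P$-stopping time), while the definition of an $\F$-local martingale requires a localizing sequence of genuine $\F$-stopping times. Your proposed fix --- dyadic approximation of $\rho$ from above --- produces $\F$-stopping times $\sigma_m\geq\rho$, but stopping \emph{later} does not preserve the martingale property: knowing that $N^{\rho}$ is a martingale says nothing about $N^{\sigma_m}$ on the interval $(\rho,\sigma_m]$, where $N$ is merely an $\F_+$-local martingale; so you are back to the original problem and the argument is circular. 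Note also that the gap between $\F$ and $\F_+$ cannot be bridged by null sets: an $\F_+$-stopping time (e.g.\ the entry time of an open set) is in general not $P$-a.s.\ equal to any $\F$-stopping time, so no ``de-augmentation'' of ordinary stopping times is available either. The natural alternative localization $S_n=\inf\{t\,:\,|N_t|\geq n\mbox{ or }|N_{t-}|\geq n\}$ does consist of $\F$-stopping times, but $N^{S_n}$ is then stopped at a possibly non-integrable jump value, so its martingale property cannot be verified directly.

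This is exactly the point where the paper invokes two nontrivial inputs: by Dellacherie's theorem \cite{Dellacherie.78} the $\F_+^P$-local martingale $\hat M$ admits a localizing sequence of \emph{predictable} $\F_+^P$-stopping times, and by \cite[Theorem~IV.78, p.\,133]{DellacherieMeyer.78} every $\F_+^P$-predictable time is $P$-a.s.\ equal to an $\F$-predictable time --- unlike ordinary stopping times, predictable times \emph{can} be de-augmented all the way down to $\F$. These $\F$-stopping times localize $\hat M$, and then the same tower-property argument you use finishes the proof. So your outline is reparable, but only by replacing the dyadic-approximation step with the predictable-localization argument (or an equivalent input); as written, the descent from $\F_+$ (or $\F_+^P$) to $\F$ for the local martingale part is not established.
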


The proof is stated in Section~\ref{se:semimartPropertyAndPsem}. In order to study the measurability of the third characteristic $\nu$, we introduce a $\sigma$-field on the set of L\'evy measures; namely, we shall use the Borel $\sigma$-field associated with a natural metric that we define next. Given a metric space $\Omega'$, let $\fM(\Omega')$ denote the set of all (nonnegative) measures on $(\Omega',\cB(\Omega'))$. We introduce the set of L\'evy measures on $\R^d$,
\begin{equation*}
  \cL= \bigg\{ \nu\in \fM(\R^d) \, \bigg| \,  \int_{\R^d} |x|^2 \wedge 1 \, \nu(dx) <\infty \ \mbox{and} \ \nu(\{0\})=0  \bigg\},
\end{equation*}
as well as their analogues on $\R_+\times \R^d$,
\begin{multline}\label{eq:defOverlineL}
  \overline{\cL}= \bigg\{ \nu\in \fM(\R_+\times \R^d) \, \bigg| \, \int_0^N \int_{\R^d} |x|^2 \wedge 1 \, \nu(dt,dx) <\infty \;\forall\, N \in \N,  \\
  \nu(\{0\} \times \R^d)=\nu(\R_+\times\{0\})=0 \bigg\}.
\end{multline}
The space $\fM^f(\R^d)$ of all finite measures on $\R^d$ is a separable metric space under a metric $d_{\fM^f(\R^d)}$ which induces the weak convergence relative to $C_b(\R^d)$; cf.\ \cite[Theorem~8.9.4, p.\,213]{Bogachev.07volII}; this topology is the natural extension of the more customary weak convergence of probability measures. With any $\mu\in\cL$, we can associate a finite measure
\begin{equation*}
  A \mapsto \int_A |x|^2 \wedge 1 \, \mu(dx), \ \ \ A \in \mathcal{B}(\R^d),
\end{equation*}
denoted by $|x|^2 \wedge 1.\mu$ for brevity. We can then define a metric $d_{\cL}$ on $\cL$ via
\begin{equation*}
  d_{\cL}(\mu,\nu)= d_{\fM^f(\R^d)}\big(|x|^2 \wedge 1.\mu, |x|^2 \wedge 1.\nu\big), \quad \mu,\nu \in \cL.
\end{equation*}
We proceed similarly with $\overline{\cL}$. First, given $N>0$, let $\overline{\cL}_N$ be the restriction of $\overline{\cL}$ to $[0,N]\times \R^d$. For any $\mu\in\overline{\cL}_N$, let $|x|^2 \wedge 1.\mu$ be the finite measure
\begin{equation*}
  A \mapsto \int_A |x|^2 \wedge 1 \, \mu(dt,dx), \ \ \ A \in \mathcal{B}([0,N]\times \R^d);
\end{equation*}
then we can again define a metric
\begin{equation*}
  d_{\overline{\cL}_N}(\mu,\nu)= d_{\fM^f([0,N]\times \R^d)}\big(|x|^2 \wedge 1.\mu, |x|^2 \wedge 1.\nu\big), \quad \mu,\nu \in \overline{\cL}_N.
\end{equation*}
Finally, we can metrize $\overline{\cL}$ by
\[
  d_{\overline{\cL}}(\mu,\nu)=\sum_{N\in\N} 2^{-N} \big(1\wedge d_{\overline{\cL}_N}(\mu,\nu) \big), \quad \mu,\nu \in \overline{\cL}.
\]

\begin{lemma}\label{le:metric}
  The pairs $(\cL,d_{\cL})$, $(\overline{\cL}_N,d_{\overline{\cL}_N})$, $(\overline{\cL},d_{\overline{\cL}})$ are separable metric spaces.
\end{lemma}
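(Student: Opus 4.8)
The plan is to reduce all three metric-and-separability claims to a single structural observation: for each of the weight functions involved, multiplication by $|x|^2\wedge 1$ sets up an isometric bijection between the relevant space of L\'evy measures and a subset of a space of finite measures that is already known to be separable metric.

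Concretely, I would first consider the map $T\colon \cL\to\fM^f(\R^d)$, $\mu\mapsto |x|^2\wedge 1.\mu$, which indeed lands in $\fM^f(\R^d)$ because $\int |x|^2\wedge 1\,\mu(dx)<\infty$ by the definition of $\cL$. By the very definition of $d_\cL$, this map is distance-preserving: $d_\cL(\mu,\nu)=d_{\fM^f(\R^d)}(T\mu,T\nu)$. The only metric axiom that does not transfer automatically from $d_{\fM^f(\R^d)}$ is positive-definiteness, and this is exactly where $T$ must be shown injective. I would argue injectivity by reconstruction: since $|x|^2\wedge 1>0$ on $\R^d\setminus\{0\}$ and every $\mu\in\cL$ satisfies $\mu(\{0\})=0$, the measure $\mu$ is recovered from $T\mu$ by $\mu(dx)=(|x|^2\wedge 1)^{-1}(T\mu)(dx)$ on $\R^d\setminus\{0\}$, with no further mass at the origin. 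Hence $T\mu=T\nu$ forces $\mu=\nu$, so $d_\cL$ is a genuine metric and $T$ is an isometry onto its image $T(\cL)\subseteq\fM^f(\R^d)$. Since $(\fM^f(\R^d),d_{\fM^f(\R^d)})$ is separable and subspaces of separable metric spaces are separable, $T(\cL)$ is separable, and therefore so is its isometric copy $(\cL,d_\cL)$. The identical argument, with $\R^d$ replaced by $[0,N]\times\R^d$ and $\fM^f(\R^d)$ by $\fM^f([0,N]\times\R^d)$ (separable metric for the same cited reason, $[0,N]\times\R^d$ being a separable metric space), handles $(\overline{\cL}_N,d_{\overline{\cL}_N})$; here injectivity uses $\nu(\R_+\times\{0\})=0$, which restricts to $\nu([0,N]\times\{0\})=0$, together with the positivity of $|x|^2\wedge 1$ off $\{x=0\}$, so that no mass sits on the zero set of the weight.

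For $(\overline{\cL},d_{\overline{\cL}})$ I would first check that $d_{\overline{\cL}}$ is a metric: each $1\wedge d_{\overline{\cL}_N}$ is a bounded metric, and a countable sum $\sum_N 2^{-N}(1\wedge d_{\overline{\cL}_N})$ with summable weights is again a pseudometric; positive-definiteness follows from the previous step, since $d_{\overline{\cL}}(\mu,\nu)=0$ forces $d_{\overline{\cL}_N}(\mu|_{[0,N]\times\R^d},\nu|_{[0,N]\times\R^d})=0$, hence equality of the restrictions, for every $N$, and letting $N\to\infty$ (using $\R_+=\bigcup_N[0,N]$ and continuity from below) gives $\mu=\nu$. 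Separability then follows by embedding: the map $\mu\mapsto(\mu|_{[0,N]\times\R^d})_{N\in\N}$ is, by the $2^{-N}$-weighted construction of $d_{\overline{\cL}}$, an isometry of $(\overline{\cL},d_{\overline{\cL}})$ into the countable product $\prod_{N}\overline{\cL}_N$ equipped with its product metric; a countable product of separable metric spaces is separable, so the image---and hence $(\overline{\cL},d_{\overline{\cL}})$---is separable.

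The routine parts are the inherited metric axioms and the standard facts that subspaces and countable products of separable metric spaces are separable. The one point that genuinely requires care, and which I regard as the crux, is the injectivity of the weighting map, i.e.\ that $d_\cL$, $d_{\overline{\cL}_N}$, $d_{\overline{\cL}}$ separate points rather than being mere pseudometrics; this is precisely where the defining conditions $\nu(\{0\})=0$ and $\nu(\R_+\times\{0\})=0$ enter, ensuring that $\mu$ charges no part of the zero set of $|x|^2\wedge 1$ and can therefore be reconstructed from $|x|^2\wedge 1.\mu$.
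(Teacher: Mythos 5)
Your proof is correct and follows exactly the route the paper indicates: the paper's proof of Lemma~\ref{le:metric} consists of the single remark that the claim ``is proved by reducing to the properties of $\fM^f$'' with details omitted, and your argument is precisely that reduction carried out in full (the isometric embedding $\mu\mapsto |x|^2\wedge 1.\mu$ into $\fM^f$, injectivity from $\nu(\{0\})=0$ resp.\ $\nu(\R_+\times\{0\})=0$, and the weighted-sum embedding of $\overline{\cL}$ into the countable product of the $\overline{\cL}_N$). No discrepancies to report.
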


This is proved by reducing to the properties of $\fM^f$; we omit the details. The above metrics define the Borel structures $\cB(\cL)$, $\cB(\overline{\cL}_N)$ and $\cB(\overline{\cL})$. Alternatively, we could have defined the $\sigma$-fields through the following result, which will be useful later on.

\begin{lemma}\label{le:kernlevy}
  Let $(Y,\mathcal{Y})$ be a measurable space and consider a function $\kappa:\, Y\to \overline{\cL}$, $y\mapsto \kappa(y,dt,dx)$. The following are equivalent:
  \begin{enumerate}
    \item $\kappa:\, (Y,\mathcal{Y})\to (\overline{\cL},\cB(\overline{\cL}))$ is measurable,
    \item for all measurable functions $f: \R_+\times\R^d\to \R$,
    \[
      (Y,\mathcal{Y})\to (\R,\cB(\R)),\quad  y \mapsto \int_0^\infty \int_{\R^d} (|x|^2 \wedge 1)f(t,x)\, \kappa(y,dt,dx)
    \]
    is measurable.
  \end{enumerate}
  Corresponding assertions hold for $\cL$ and $\overline{\cL}_N$.
\end{lemma}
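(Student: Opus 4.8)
The plan is to reduce everything to the already-understood measurable structure of the space $\fM^f$ of finite measures. The fact I would invoke is the analogue, recorded in the paper for $\fP(\Omega)$ and available from the Bogachev reference, that for a separable metric space the Borel $\sigma$-field of $\fM^f$ is generated by the integration functionals $\mu\mapsto\int g\,d\mu$, $g\in C_b$: these maps are $d_{\fM^f}$-continuous and separate measures, so they generate the Borel $\sigma$-field. The whole proof then consists of transporting $\kappa$ into $\fM^f$ and recognizing the displayed integrals as such functionals.

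First I would set up the reduction chain. For each $N$ let $\rho_N\colon\overline{\cL}\to\overline{\cL}_N$ be the restriction $\mu\mapsto\mu|_{[0,N]\times\R^d}$ and let $\Phi_N\colon\overline{\cL}_N\to\fM^f([0,N]\times\R^d)$, $\mu\mapsto |x|^2\wedge 1.\mu$. By the very definition of $d_{\overline{\cL}_N}$, the map $\Phi_N$ is an isometry, hence a homeomorphism onto its image, so a map into $\overline{\cL}_N$ is Borel iff its composition with $\Phi_N$ is Borel into $\fM^f$. Moreover $d_{\overline{\cL}}=\sum_N 2^{-N}(1\wedge d_{\overline{\cL}_N})$ is exactly the product metric under the embedding $\mu\mapsto(\rho_N\mu)_N$ into $\prod_N\overline{\cL}_N$, which is injective since a measure in $\overline{\cL}$ is determined by its restrictions to the sets $[0,N]\times\R^d$; as the $\overline{\cL}_N$ are separable (Lemma~\ref{le:metric}), $\cB(\overline{\cL})$ is the trace of $\bigotimes_N\cB(\overline{\cL}_N)$, i.e.\ it is generated by the $\rho_N$. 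Combining these facts, $\kappa$ is $\cB(\overline{\cL})$-measurable iff for every $N$ and every $g\in C_b([0,N]\times\R^d)$ the map $y\mapsto\int_0^N\int(|x|^2\wedge 1)g(t,x)\,\kappa(y,dt,dx)$ is $\mathcal{Y}$-measurable.

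For the implication (i)$\Rightarrow$(ii) I would run a functional monotone class argument. Fixing $N$, the class of bounded Borel $f$ on $[0,N]\times\R^d$ for which $y\mapsto\int_0^N\int(|x|^2\wedge 1)f\,\kappa(y)$ is measurable is a vector space, contains the multiplicative class $C_b$ (by the reduction step) together with the constants, and is closed under bounded monotone limits: for each fixed $y$ the measure $|x|^2\wedge 1.\kappa(y,\cdot)$ is finite on $[0,N]\times\R^d$ by the defining property of $\overline{\cL}$, so dominated convergence gives pointwise-in-$y$ convergence of the integrals and hence measurability of the limit. The monotone class theorem then yields all bounded Borel $f$. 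For general measurable $f\ge 0$, the functions $g_N=(f\wedge N)\1_{[0,N]\times\R^d}$ are bounded Borel and increase to $f$, so by monotone convergence $\int_0^\infty\int(|x|^2\wedge 1)f\,\kappa(y)$ is a pointwise limit of measurable $[0,\infty]$-valued maps and hence measurable; applying this to $f^+$ and $f^-$ and forming the difference with the convention $\infty-\infty=-\infty$ handles arbitrary measurable $f$. The reverse implication (ii)$\Rightarrow$(i) is then immediate from the characterization at the end of the reduction step: for fixed $N$ and $g\in C_b([0,N]\times\R^d)$ one applies (ii) to the bounded measurable function $f=g\cdot\1_{[0,N]\times\R^d}$.

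The main obstacle is the reduction step itself, namely correctly identifying $\cB(\overline{\cL})$ with the $\sigma$-field generated by the integration functionals through the passage $\overline{\cL}\to\overline{\cL}_N\to\fM^f$ and the product-space description of $\cB(\overline{\cL})$; once this is in place, both implications are routine (a monotone class extension and a direct specialization). The cases of $\cL$ and $\overline{\cL}_N$ follow from the same argument, simply dropping the time integral, respectively the sum over $N$.
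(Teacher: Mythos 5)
Your proposal is correct and takes essentially the same route as the paper's own (largely omitted) proof: reduce to the space $\fM^f$ of finite measures via the isometry $\mu\mapsto |x|^2\wedge 1.\mu$ and the restriction maps $\rho_N$, invoke the standard fact that $\cB(\fM^f)$ is generated by the $C_b$-integration functionals (the analogue of the Bertsekas--Shreve result the paper cites), and extend to all measurable integrands by a monotone class argument before lifting from $\overline{\cL}_N$ to $\overline{\cL}$. In effect you have supplied exactly the details the paper declares it omits.
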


\begin{proof}
  A similar result is standard, for instance, for the set of probability measures on a Polish space; cf.\ \cite[Proposition~7.25, p.\,133]{BertsekasShreve.78}. The arguments in this reference can be adapted to the space $\overline{\cL}_N$ by using the facts stated in \cite[Chapter~8]{Bogachev.07volII}. Then, one can lift the result to the space $\overline{\cL}$. We omit the details.
\end{proof}

\subsection{Main Results}

We can now state our main result, the existence of a jointly measurable version $(P,\omega,t)\mapsto (B^P_t(\omega),C_t(\omega),\nu^P(\omega))$ of the characteristics of a process $X$ under a family of measures $P$. Here the second characteristic $C$ is a single process not depending on $P$; roughly speaking, this is possible because two measures under which $X$ has different diffusion are necessarily singular. By contrast, the first and the third characteristic have to depend on $P$ in all nontrivial cases: in general, two equivalent measures will lead to different drifts and compensators, so that the families $(B^P)_P$ and $(\nu^P)_P$ are not consistent with respect to $P$ and cannot be aggregated into single processes. We write $\S^d_+$ for the set of symmetric nonnegative definite  $d\times d$-matrices.

\begin{theorem}\label{th:charactMbl}
  Let $X$ be a c\`adl\`ag, $\F$-adapted, $\R^d$-valued process on a filtered measurable space $(\Omega,\cF,\F)$, where $\Omega$ is a separable metric space, $\cF=\cB(\Omega)$ and each $\sigma$-field $\cF_t$ of the filtration $\F=(\cF_t)_{t\geq0}$ is separable. Then the set
  \[
    \fP_{sem}=\{P\in \fP(\Omega)\,|\, \mbox{$X$ is a semimartingale on }(\Omega,\cF,\F,P)\}\subseteq \fP(\Omega)
  \]
  is Borel-measurable and there exists a Borel-measurable map
  \[
    \fP_{sem}\times \Omega\times \R_+ \to \R^d\times \S^d_+ \times \overline{\cL},\quad (P,\omega,t)\mapsto (B^P_t(\omega),C_t(\omega),\nu^P(\omega))
  \]
  such that for each $P\in\fP_{sem}$,
  \begin{enumerate}
    \item $(B^P,C,\nu^P)$ are $P$-semimartingale characteristics of $X$,
    \item $B^P$ is $\F_{+}$-adapted, $\F^P_{+}$-predictable and has right-continuous, $P$-a.s.\ finite variation paths,
    \item $C$ is $\F$-predictable and has $P$-a.s.\ continuous, increasing paths\footnote{Alternately, one can construct $C$ such that all paths are continuous and increasing, at the expense of being predictable in a slightly larger filtration. See Proposition~\ref{prop:quadvarunabP}.} in $\S^d_+$,
%    \item $C$ is $\F\vee \cN_{sem}$-predictable and has continuous, increasing paths in $\S^d_+$, where $\cN_{sem}=\{A\in \cF\,|\, P(A)=0\mbox{ for all }P\in\fP_{sem}\}$,
    \item $\nu^P$ is an $\F^P_{+}$-predictable random measure on $\R_+\times \R^d$.
  \end{enumerate}
  Moreover, there exists a decomposition
  \begin{equation*}
     \nu^P(\cdot,dt,dx) = K^P(\cdot,t,dx)\,dA^P_t\quad P\as,
  \end{equation*}
  where
  \begin{enumerate}[topsep=3pt, partopsep=0pt, itemsep=2pt,parsep=2pt]
  \addtocounter{enumi}{4}
    \item $(P,\omega,t)\mapsto A^P_t(\omega)$ is Borel-measurable and for all $P \in \fP_{sem}$, $A^P$ is an $\F_{+}$-adapted, $\F^P_+$-predictable, $P$-integrable process with right-continuous and $P$-a.s.\ increasing paths,
    \item $(P,\omega,t)\mapsto K^P(\omega,t,dx)$ is a kernel on $(\R^d,\B(\R^d))$ given $(\fP_{sem}\times\Omega\times\R_+, \B(\fP_{sem})\otimes \cF\otimes \cB(\R_+))$ and for all $P \in \fP_{sem}$, $(\omega,t)\mapsto K^P(\omega,t,dx)$ is a kernel on $(\R^d,\B(\R^d))$ given $(\Omega\times\R_+, \P^P)$.
  \end{enumerate}
\end{theorem}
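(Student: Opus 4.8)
The plan is to reconstruct the classical structure theory of semimartingales---the Bichteler--Dellacherie characterization, the Doob--Meyer decomposition, and the disintegration of the jump compensator---while carrying the law $P$ as a measurable parameter through every step. The common feature of all these classical results is a compactness argument used to pass from discrete to continuous time, and the crux of the work is to perform each such extraction measurably in $P$. The enabling device, which I would establish first, is a measurable version of Alaoglu's theorem for $L^2(P)$: given a family of sequences that is bounded in $L^2(P)$ and depends measurably on $P$, one can select convex combinations (or a subsequence) converging weakly in $L^2(P)$, with the selection depending measurably on $P$. This provides a measurable substitute for the non-constructive subsequence extractions underlying the proofs of \cite{BeiglbockSchachermayerVeliyev.11, BeiglbockSchachermayerVeliyev.12}.

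First I would show that $\fP_{sem}$ is Borel. By the Bichteler--Dellacherie theorem, $X$ is a $P$-semimartingale if and only if the stochastic integrals $\int H\,dX$ of simple predictable integrands $H$ with $|H|\le 1$ form a set that is bounded in $P$-probability. Using the separability of each $\cF_t$, I would fix a countable generating family $(H_n)$ of such integrands, reducing the semimartingale property to the condition $\lim_{c\to\infty}\sup_n P(|\int H_n\,dX|>c)=0$. Since each $P\mapsto P(A)$ is measurable, this is a Borel condition on $P$, so $\fP_{sem}$ is Borel; Proposition~\ref{prop:SemFunab} ensures the characterization is insensitive to the choice of filtration.

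Next I would prove a measurable Doob--Meyer decomposition: for a $P$-submartingale of class (D) depending measurably on $P$, the predictable compensator $A^P$ can be chosen jointly measurable in $(P,\omega,t)$. Following the compactness-based proof, $A^P$ arises as a weak limit of the discrete-time Doob decompositions along a refining sequence of partitions, and the measurable Alaoglu theorem produces this limit measurably in $P$. From this I would deduce a measurable compensator for processes of integrable variation, and thence a measurable canonical decomposition $\widetilde X = M^P + B^P$ of the bounded-jump process $\widetilde X$, yielding the first characteristic $B^P$ with its regularity properties (ii). I expect this to be the main obstacle: extracting a limit that depends measurably on $P$ uniformly over all of $\fP_{sem}$, while ensuring that the resulting $A^P$ is genuinely $\F^P_{+}$-predictable rather than merely adapted---and doing so outside the usual conditions, in the filtrations $\F_+$ and $\F^P_+$---is exactly where the measurable Alaoglu theorem must be deployed most carefully, and it is the step on which the measurability of both $B^P$ and $\nu^P$ rests.

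Finally I would construct $C$ and $\nu^P$. The second characteristic $C=\langle X^c\rangle$ is obtained pathwise as $[X]_t-\sum_{s\le t}(\Delta X_s)(\Delta X_s)^\top$, where $[X]$ is the limit of sums of squared increments along a fixed refining sequence of partitions; aggregating these sums into a single measurable process by means of a medial limit gives a process $C$ that is simultaneously a version of the second characteristic for every $P$, which is possible precisely because measures with distinct diffusion coefficients are mutually singular. For the third characteristic, I would compensate, for each bounded measurable $g$ vanishing near the origin, the integrable-variation process $\sum_{s\le t} g(\Delta X_s)$ using the measurable compensator above; applying this to a countable generating family of test functions and disintegrating via Lemma~\ref{le:kernlevy} yields the predictable random measure $\nu^P$ together with its decomposition $K^P(\cdot,t,dx)\,dA^P_t$.
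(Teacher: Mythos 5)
Your overall architecture --- a measurable Komlos/Alaoglu extraction feeding a measurable Doob--Meyer theorem, then a canonical decomposition for $B^P$ --- matches the paper's strategy, and your route to the Borel-measurability of $\fP_{sem}$ (the classical Bichteler--Dellacherie criterion over a countable family of elementary integrands, using the separability of the $\cF_t$ and Proposition~\ref{prop:SemFunab} to handle the filtration issues) is a legitimate alternative to the paper's characterization via the stopping times $\sigma_{m,n}(c)$, $\tau_{m,P,n}(c)$ from \cite{BeiglbockSchachermayerVeliyev.11}. However, two of your steps have genuine gaps. The first concerns $C$. Sums of squared increments along a \emph{fixed deterministic} refining sequence of partitions converge only in $P$-probability, not $P$-a.s., which is exactly why you invoke a medial limit to aggregate over $P$. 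But medial limits are not available in ZFC (their existence requires additional axioms such as the continuum hypothesis or Martin's axiom), and, more to the point, a medial limit of Borel functions is only \emph{universally} measurable: your $C$ would be neither Borel nor $\F$-predictable, so it cannot deliver items (iii) and the Borel-measurability of the map asserted in the theorem. The paper avoids this by constructing $[X]$ via Bichteler--Karandikar pathwise integration: the approximating integrals $I^n$ use partitions by \emph{stopping times} $\tau^n_l$ built from the oscillations of $X$ itself, so that $I^n$ converges locally uniformly $P$-a.s.\ simultaneously for every $P\in\fP_{sem}$; one then takes a $\limsup$ and multiplies by the indicator of the Borel set where convergence (resp.\ monotonicity in $\S^d_+$) holds, staying Borel and $\F$-predictable throughout. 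Your construction of $C$ must be replaced by such a pathwise scheme, or the conclusion weakened to universal measurability.

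The second gap concerns $\nu^P$ and its decomposition. Compensating $g_n\ast\mu^X$ for countably many test functions $g_n$ produces a countable family of scalar predictable increasing processes, but the theorem demands an $\overline{\cL}$-valued Borel map together with a kernel decomposition $\nu^P(\cdot,dt,dx)=K^P(\cdot,t,dx)\,dA^P_t$ satisfying (v)--(vi). Passing from countably many scalar compensators to an actual kernel $K^P(\omega,t,dx)$ is precisely the nontrivial step, and Lemma~\ref{le:kernlevy} cannot perform it: that lemma only characterizes when an \emph{already constructed} $\overline{\cL}$-valued map is Borel; it is not a disintegration theorem. What is needed is a disintegration of the Dol\'eans measure $m^P(d\omega,dt,dx)$ on $\cP\otimes\cB(\R^d)$ with measurable dependence on the parameter $P$ --- the paper's Lemma~\ref{le:decompkern} --- which in turn rests on the separability of the predictable $\sigma$-field (Lemma~\ref{le:predictableSeparable}); the paper applies it after compensating the single bounded process $V\ast\mu^X$ for a strictly positive predictable weight $V$. (Relatedly, your processes $\sum_{s\le t}g_n(\Delta X_s)$ have only locally integrable variation, so some localization or weighting is needed before your measurable compensator even applies.) These repairs are available, but as written the kernel construction --- the heart of the ``Moreover'' part of the statement --- is missing.
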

The measurability of $\fP_{sem}$ is proved in Section~\ref{se:semimartPropertyAndPsem}, whereas the characteristics are constructed in Section~\ref{se:characteristics}. We remark that the conditions of the theorem are satisfied in particular when $X$ is the coordinate-mapping process on Skorohod space and $\F$ is the filtration generated by $X$. This is by far the most important example---the slightly more general situation in the theorem does not cause additional work.

Of course, we are particularly interested in measures $P$ such that the characteristics are absolutely continuous with respect to the Lebesgue measure $dt$ on $\R_+$; that is, the set
\[
  \fP^{ac}_{sem}=\big\{P\in \fP_{sem}\,\big|\, \mbox{$(B^P,C,\nu^P)\ll dt$, $P$-a.s.}\big\}.
\]
(Absolute continuity does not depend on the choice of the truncation function~$h$; cf.\ \cite[Proposition~2.24, p.\,81]{JacodShiryaev.03}.) Given a triplet of absolutely continuous characteristics, the corresponding derivatives (defined $dt$-a.e.) are called the differential characteristics of $X$ and denoted by $(b^P,c,F^P)$.

\begin{theorem}\label{th:diffCharactMbl}
  Let $X$ and $(\Omega,\cF,\F)$ be as in Theorem~\ref{th:charactMbl}. Then the set
  \[
    \fP^{ac}_{sem}=\big\{P\in \fP_{sem}\,\big|\, \mbox{$(B^P,C,\nu^P)\ll dt$, $P$-a.s.}\big\}
  \]
  is Borel-measurable and there exists a Borel-measurable map
  \[
    \fP^{ac}_{sem}\times \Omega\times \R_+ \to \R^d\times \S^d_+ \times \cL,\quad (P,\omega,t)\mapsto (b^P_t(\omega),c_t(\omega),F^P_{\omega,t})
  \]
  such that for each $P\in\fP^{ac}_{sem}$,
  \begin{enumerate}
    \item $(b^P,c,F^P)$ are $P$-differential characteristics of $X$,
    \item $b^P$ is $\F$-predictable,
    \item $c$ is $\F$-predictable,
    \item $(\omega,t)\mapsto F^P_{\omega,t}(dx)$ is a kernel on $(\R^d,\B(\R^d))$ given $(\Omega\times\R_+, \P)$.
  \end{enumerate}
\end{theorem}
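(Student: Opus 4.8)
The plan is to obtain the differential characteristics $(b^P,c,F^P)$ as the Radon--Nikodym densities with respect to $dt$ of the \emph{integral} characteristics $(B^P,C,\nu^P)$ already furnished by Theorem~\ref{th:charactMbl}, computed through an explicit difference-quotient formula that is simultaneously jointly Borel in $(P,\omega,t)$ and predictable in the \emph{un-augmented} filtration $\F$. The first step is to reduce absolute continuity of the third characteristic to that of a single increasing process via the decomposition $\nu^P(\cdot,dt,dx)=K^P(\cdot,t,dx)\,dA^P_t$. Since $A^P_t=\int_0^t\!\int_{\R^d}(|x|^2\wedge1)\,\nu^P(ds,dx)$ is increasing with $\int_{\R^d}(|x|^2\wedge1)\,K^P(t,dx)=1$, one has $\nu^P\ll dt$ $P$-a.s.\ if and only if $dA^P\ll dt$ $P$-a.s., and on $\fP^{ac}_{sem}$ the density is $F^P_{\omega,t}(dx)=a^P_t(\omega)\,K^P(\omega,t,dx)$, where $a^P$ denotes the $dt$-density of $A^P$; note $F^P_{\omega,t}\in\cL$ since $\int(|x|^2\wedge1)F^P_{\omega,t}(dx)=a^P_t(\omega)<\infty$ for $dt$-a.e.\ $t$. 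Hence the whole question reduces to the three increasing processes $\Var(B^P)$ (the total variation of $B^P$), $\tr C$, and $A^P$, each of which is right-continuous, $P$-a.s.\ increasing, and jointly Borel in $(P,\omega,t)$.

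For the measurability of $\fP^{ac}_{sem}$ I would use the Lebesgue differentiation theorem in the form of a singular-part functional. For a generic such increasing process $G^P$, define the candidate density $g^P_t:=\limsup_n n\big(G^P_{t-}-G^P_{(t-1/n)-}\big)$, a jointly measurable process. By Lebesgue differentiation, for $P$-a.e.\ $\omega$ one has $G^P_T(\omega)-G^P_0(\omega)-\int_0^T g^P_t(\omega)\,dt\geq0$, with equality (for every $T$) exactly when $t\mapsto G^P_t(\omega)$ is absolutely continuous. Consequently $dG^P\ll dt$ holds $P$-a.s.\ iff $E^P\big[G^P_T-G^P_0-\int_0^T g^P_t\,dt\big]=0$ for all $T\in\N$; the integrand is nonnegative, so this expectation is a well-defined element of $[0,\infty]$ irrespective of integrability. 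The map $P\mapsto E^P[\cdot]$ of a jointly measurable nonnegative integrand is Borel (a monotone-class argument reduces this to indicators of measurable rectangles), whence $\fP^{ac}_{sem}=\fP_{sem}\cap\bigcap_{T\in\N}\{S^{\Var(B^P)}_T=0\}\cap\{S^{C}_T=0\}\cap\{S^{A^P}_T=0\}$ is Borel-measurable.

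The same difference-quotient device produces the densities with the required regularity. Replacing each process by its left-continuous version $B^P_{t-}$, $C_{t-}$, $A^P_{t-}$ and setting $b^P_t:=\limsup_n n(B^P_{t-}-B^P_{(t-1/n)-})$, and analogously $c_t$ and $a^P_t$, the resulting processes are jointly Borel in $(P,\omega,t)$, since every operation used (left limits, time shifts, differences, countable $\limsup$) preserves joint measurability. Crucially, because $B^P$ and $A^P$ are \emph{$\F_+$-adapted} and right-continuous (Theorem~\ref{th:charactMbl}(ii),(v)) and $C$ is $\F$-predictable (iii), their left-limit processes are $\cF_{\cdot-}$-measurable and left-continuous, hence $\F$-predictable; difference quotients and countable $\limsup$s of $\F$-predictable processes are again $\F$-predictable. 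Thus $b^P$ and $c$ are $\F$-predictable and, by Lebesgue differentiation, coincide $dt\otimes P$-a.e.\ on $\fP^{ac}_{sem}$ with the true drift and diffusion densities, establishing parts (i)--(iii) for these two characteristics.

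The remaining and \textbf{main obstacle} is part (iv): the kernel $K^P$ supplied by Theorem~\ref{th:charactMbl}(vi) is only $\cP^P$-measurable (augmented), whereas $F^P$ is required to be a kernel given the \emph{un-augmented} predictable $\sigma$-field $\cP$. The integrals $\int f_k\,d\nu^P$ are genuinely only $\F^P_+$-predictable, so unlike for $B^P$ and $A^P$ the left-limit trick does not by itself remove the augmentation; the augmentation can only be discarded at the price of modifying $K^P$ on a $P$-evanescent (equivalently, a $\nu^P$-null) set. I would resolve this by invoking the standard fact that every $\cP^P$-measurable process agrees, off a $P$-evanescent set, with a $\cP$-measurable one, and by carrying out this reduction \emph{measurably in $P$}: applying it to the scalar processes $(\omega,t)\mapsto\int(|x|^2\wedge1)f_k(x)\,K^P(\omega,t,dx)$ for a countable measure-determining family $(f_k)$ yields jointly Borel, $\cP$-measurable versions, from which Lemma~\ref{le:kernlevy} reconstructs an $\cL$-valued, $\F$-predictable kernel $\widehat K^P$ equal to $K^P$ for $\nu^P$-a.e.\ $(\omega,t)$, $P$-a.s. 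Setting $F^P_{\omega,t}(dx):=a^P_t(\omega)\,\widehat K^P(\omega,t,dx)$ then delivers the jointly measurable, $\F$-predictable differential Lévy kernel, and the identity $\nu^P=F^P\,dt$ on $\fP^{ac}_{sem}$ completes part (i). The delicate point throughout is ensuring that this un-augmentation, the reconstruction of the $\cL$-valued kernel, and the joint Borel dependence on $P$ are all carried out at once.
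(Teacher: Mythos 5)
Your treatment of the measurability of $\fP^{ac}_{sem}$ and of the first two differential characteristics is essentially the paper's own argument: the paper also reduces everything to increasing processes (it lumps $\Var(B^{P,i})$, $\Var(C^{ij})$ and $|A^P|$ into a single process $R^P$), uses dyadic difference quotients and Lebesgue differentiation to obtain a jointly measurable candidate density, characterizes $\fP^{ac}_{sem}$ through an expectation identity handled by Lemma~\ref{le:BorelcondexpPP}, and obtains $b^P$, $c$, $a^P$ by taking left limits (to pass from $\F_+$-adapted to $\F$-predictable) followed by $\limsup$ difference quotients. One caveat: your normalization $A^P_t=\int_0^t\int(|x|^2\wedge1)\,\nu^P(ds,dx)$ with $\int(|x|^2\wedge1)\,K^P(t,dx)=1$ is \emph{not} part of the statement of Theorem~\ref{th:charactMbl}; from the black-box statement alone, $\nu^P\ll dt$ does not force $dA^P\ll dt$ (nothing prevents $K^P(\omega,t,\cdot)$ from vanishing where $A^P$ is singular), so the equivalence you rely on already requires going back into the construction of Proposition~\ref{prop:mversioncomp}, where $A^P=V\ast\nu^P$ $P\as$

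The genuine gap is in your part (iv), and it sits exactly at the point you flag as delicate. Your repair has two unjustified steps. First, the Dellacherie--Meyer fact that a $\cP^P$-measurable process is indistinguishable from a $\cP$-measurable one is a per-$P$ existence statement with no canonical version; ``carrying out this reduction measurably in $P$'' is precisely the kind of aggregation problem this paper exists to solve, and you provide no mechanism for it. (This half is actually fixable by your own left-limit trick: the processes $t\mapsto\int_0^t\int(|x|^2\wedge1)f_k(x)\,\nu^P(ds,dx)$ are right-continuous, increasing, and jointly measurable via the joint measurability of $\nu^P$ and Lemma~\ref{le:kernlevy}, so difference quotients give jointly measurable $\F$-predictable moment densities.) Second, and more seriously, Lemma~\ref{le:kernlevy} does not ``reconstruct'' an $\cL$-valued kernel from countably many scalar moment processes: it characterizes measurability of a map that is \emph{already given} as $\cL$-valued. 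Passing from the family $(h_k)_k$ back to an everywhere-defined, $\cP$-measurable, $\cL$-valued kernel would need a measurable Riesz-representation/selection argument that you do not supply. The paper sidesteps both problems by not treating Theorem~\ref{th:charactMbl} as a black box: in the proof of Proposition~\ref{prop:mversioncomp}, the disintegration Lemma~\ref{le:decompkern} is applied with $(Y,\mathcal{Y})=(\Omega\times\R_+,\cP)$ --- the \emph{un-augmented} predictable $\sigma$-field, which is separable by Lemma~\ref{le:predictableSeparable} --- so the kernel $\widetilde{K}^P$ of \eqref{eq:defKtildeKernel} is $\cB(\fP_{sem})\otimes\cP$-measurable from the outset; the augmentation in Theorem~\ref{th:charactMbl}(vi) enters only through the indicator of a good set. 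The proof of Theorem~\ref{th:diffCharactMbl} then simply sets $\widetilde{F}^P_{\omega,t}(dx)=a^P_t(\omega)\,\widetilde{K}^P(\omega,t,dx)$ and discards the exceptional set by an indicator $\1_{G}(P,\omega,t)$ whose $P$-sections lie in $\cP$ and are $P\otimes dt$-null, which yields the required $\F$-predictable, everywhere $\cL$-valued kernel $F^P$.
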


In applications, we are interested in constraining the set $\fP^{ac}_{sem}$ via the values of the differential characteristics. Given a collection $\Theta\subseteq \R^d \times \S^d_+\times\cL$ of L\'evy triplets, we let
\[
  \fP^{ac}_{sem}(\Theta)=\big\{P\in \fP^{ac}_{sem}\,\big|\, (b^P,c,F^P)\in \Theta, P\otimes dt\mbox{-a.e.}\big\}.
\]

\begin{corollary}\label{co:PThetaMbl}
  Let $X$ and $(\Omega,\cF,\F)$ be as in Theorem~\ref{th:charactMbl}. Then $\fP^{ac}_{sem}(\Theta)$ is Borel-measurable whenever $\Theta\subseteq \R^d \times \S^d_+\times\cL$ is Borel-measurable.
\end{corollary}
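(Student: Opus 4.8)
The plan is to reduce the statement to the measurability already furnished by Theorem~\ref{th:diffCharactMbl} together with a single standard fact about integrals depending on the underlying law. Since $\fP^{ac}_{sem}$ is known to be Borel, it suffices to encode membership in $\fP^{ac}_{sem}(\Theta)$ through a Borel functional. The natural candidate is the mass of the ``bad set'',
\[
  G(P) := (P\otimes dt)\big(\{(\omega,t):\ (b^P_t(\omega),c_t(\omega),F^P_{\omega,t})\notin\Theta\}\big),
\]
which vanishes exactly when the differential characteristics lie in $\Theta$ for $P\otimes dt$-a.e.\ $(\omega,t)$. The only apparent obstruction is that $dt$ has infinite mass on $\R_+$; this is harmless, since replacing $dt$ by the equivalent finite measure $e^{-t}\,dt$ alters neither the nullsets nor the defining condition. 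I would therefore work with
\[
  G(P) = \int_{\R_+}\!\!\int_\Omega \1_{\Theta^c}\big(b^P_t(\omega),c_t(\omega),F^P_{\omega,t}\big)\,P(d\omega)\,e^{-t}\,dt,
\]
so that $P\in\fP^{ac}_{sem}(\Theta)$ if and only if $P\in\fP^{ac}_{sem}$ and $G(P)=0$; the corollary then follows once $G$ is Borel, because $\fP^{ac}_{sem}(\Theta)=\fP^{ac}_{sem}\cap G^{-1}(\{0\})$.

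First I would verify that the integrand is jointly measurable. Because $(\cL,d_{\cL})$ is a separable metric space (Lemma~\ref{le:metric}), the product Borel $\sigma$-field on $\R^d\times\S^d_+\times\cL$ coincides with the Borel $\sigma$-field of the product, so $\1_{\Theta^c}$ is Borel there. Composing it with the Borel map $(P,\omega,t)\mapsto(b^P_t(\omega),c_t(\omega),F^P_{\omega,t})$ of Theorem~\ref{th:diffCharactMbl} yields a function
\[
  \phi(P,\omega,t):=\1_{\Theta^c}\big(b^P_t(\omega),c_t(\omega),F^P_{\omega,t}\big)
\]
that is measurable for $\B(\fP^{ac}_{sem})\otimes\cF\otimes\B(\R_+)$. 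Integrating out $t$ against $e^{-t}\,dt$ and applying Tonelli's theorem, the partial integral $\psi(P,\omega):=\int_{\R_+}\phi(P,\omega,t)\,e^{-t}\,dt$ is a bounded, jointly $\B(\fP^{ac}_{sem})\otimes\cF$-measurable function with $G(P)=\int_\Omega\psi(P,\omega)\,P(d\omega)$.

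The main point---and the only step that is not bookkeeping---is the measurability of $P\mapsto\int_\Omega\psi(P,\omega)\,P(d\omega)$ for bounded jointly measurable $\psi$. I would establish this by a functional monotone class argument: for a product indicator $\psi(P,\omega)=\1_{\cO}(P)\1_A(\omega)$, with $\cO\in\B(\fP^{ac}_{sem})$ and $A\in\cF$, one has $\int\psi(P,\cdot)\,dP=\1_{\cO}(P)\,P(A)$, which is Borel in $P$ because $P\mapsto P(A)$ is Borel by the very definition of $\B(\fP(\Omega))$. The class of bounded jointly measurable $\psi$ for which the conclusion holds is closed under bounded monotone limits and contains this multiplicative generating system of the product $\sigma$-field, hence contains all bounded jointly measurable functions. (Equivalently, this is the measurability of the integral of a bounded Borel function against the Borel stochastic kernel $P\mapsto P$; cf.\ \cite{BertsekasShreve.78}.) Applying this to $\psi$ shows that $G$ is Borel, which completes the argument. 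I expect no genuine difficulty beyond this standard lemma, as all the analytic content has been absorbed into Theorem~\ref{th:diffCharactMbl}.
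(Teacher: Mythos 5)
Your proof is correct and follows essentially the same route as the paper: joint measurability of $(P,\omega,t)\mapsto(b^P_t,c_t,F^P_{\omega,t})$ from Theorem~\ref{th:diffCharactMbl}, Fubini--Tonelli to integrate out $t$, and measurability of $P\mapsto E^P[\psi(P,\cdot)]$ to integrate out $\omega$. The only cosmetic differences are your use of the equivalent finite measure $e^{-t}\,dt$ (the paper instead tests whether the $t$-section is Lebesgue-null, forming a measurable set $G'\subseteq \fP^{ac}_{sem}\times\Omega$, and then asks $E^P[\1_{G'}(P,\cdot)]=1$), and that your monotone class step is precisely the first assertion of the paper's Lemma~\ref{le:BorelcondexpPP}, which you could simply have cited.
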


The proofs for Theorem~\ref{th:diffCharactMbl} and Corollary~\ref{co:PThetaMbl} are stated in Section~\ref{se:diffCharacteristics}.

\begin{remark}
  The arguments in the subsequent sections yield similar results when $X$ is $\F_+$-adapted (instead of $\F$-adapted), or if $X$ is replaced by an appropriately measurable family $(X^P)_P$ as in Proposition~\ref{prop:Doobmeyer} below---we have formulated the main results in the setting which is most appropriate for the applications we have in mind.
\end{remark}

\section{Auxiliary Results}\label{se:auxResults}

This section is a potpourri of tools that will be used repeatedly later on; they mainly concern the possibility of choosing $L^1(P)$-convergent subsequences and limits in a measurable way (with respect to~$P$). Another useful result concerns right-continuous modifications of processes.

Throughout this section, we place ourselves in the setting of Theorem~\ref{th:charactMbl}; that is, $\Omega$ is a separable metric space, $\cF=\cB(\Omega)$ and $\F=(\cF_t)$ is a filtration such that $\cF_t$ is separable for all $t\geq0$. Moreover, we fix a measurable set $\fP\subseteq \fP(\Omega)$; recall that $\fP(\Omega)$ carries the Borel structure induced be the weak convergence. (The results of this section also hold for a general measurable space $(\Omega,\cF)$ if $\fP(\Omega)$ is instead endowed with the $\sigma$-field generated by the maps $P\mapsto P(A)$, $A\in\cF$.)

As $P$ plays the role of a measurable parameter, it is sometimes useful to consider the filtered measurable space
\begin{equation}\label{eq:defHatOmega}
  \big(\widehat{\Omega},\widehat{\cF}\big):= \big(\fP\times \Omega, \cB(\fP)\otimes \cF\big), \quad \widehat{\F}=(\widehat{\cF}_t)_{t\geq0},\quad \widehat{\cF}_t:=\cB(\fP)\otimes \cF_t
\end{equation}
and its right-continuous filtration $\widehat{\F}_+$; a few facts can be obtained simply by applying standard results in this extended space.

\begin{lemma}\label{le:BorelcondexpPP}
  Let $t\geq0$ and let $f:\widehat{\Omega} \to \overline{\R}$ be measurable. Then the function $\fP\to \overline{\R}$, $P\mapsto E^P[f(P,\cdot)]$ is measurable. Moreover, there exist versions of the conditional expectations
  $E^P[f(P,\cdot) \, | \,\cF_t]$ and $E^P[f(P,\cdot) \, | \,\cF_{t+}]$ such that
  \[
    \widehat{\Omega} \to \overline{\R}, \quad (P,\omega)\mapsto E^P[f(P,\cdot) \, | \,\cF_t](\omega), \quad (P,\omega)\mapsto E^P[f(P,\cdot) \, | \,\cF_{t+}](\omega)
  \]
  are measurable with respect to $\widehat{\cF}_t$ and $\widehat{\cF}_{t+}$, respectively, while for fixed $P\in\fP$,
  \[
    \Omega \to \overline{\R}, \quad \omega\mapsto E^P[f(P,\cdot) \, | \,\cF_t](\omega), \quad \omega\mapsto E^P[f(P,\cdot) \, | \,\cF_{t+}](\omega)
  \]
  are measurable with respect to $\cF_t$ and $\cF_{t+}$, respectively.
\end{lemma}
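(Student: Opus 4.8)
The plan is to base everything on the scalar statement that $P\mapsto E^P[f(P,\cdot)]$ is measurable, and then to obtain the two conditional expectations by approximating $\cF_t$ from the inside, and $\cF_{t+}$ from the right, by finite $\sigma$-fields, passing to a limit that is a legitimate version of the conditional expectation simultaneously for \emph{every} $P$. The crux throughout is that a conditional expectation is only defined up to $P$-nullsets, so I must select versions that are jointly measurable in $(P,\omega)$ and remain valid for all $P$ at once.

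For the first assertion I would reduce to $f\geq 0$ via $E^P[f(P,\cdot)]=E^P[f^+(P,\cdot)]-E^P[f^-(P,\cdot)]$ and the convention $\infty-\infty=-\infty$, then run a Dynkin argument. The class of sets $E\in\widehat{\cF}$ for which $P\mapsto E^P[\1_E(P,\cdot)]$ is measurable contains the measurable rectangles $B\times A$, since $E^P[\1_{B\times A}(P,\cdot)]=\1_B(P)\,P(A)$ is measurable by the choice of Borel structure on $\fP(\Omega)$ (under which $P\mapsto P(A)$ is measurable); it is a $\lambda$-system, hence equals $\widehat{\cF}$, and monotone convergence lifts this to all nonnegative $f$.

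For the $\cF_t$-conditional expectation I use separability of $\cF_t$: write $\cF_t=\sigma(A_1,A_2,\dots)$, let $\cG_n:=\sigma(A_1,\dots,A_n)$ be generated by the finite partition $\{D^n_1,D^n_2,\dots\}$, so $\sigma(\bigcup_n\cG_n)=\cF_t$. For bounded $f$ the explicit formula
\[
  g_n(P,\omega):=\sum_j \frac{E^P[f(P,\cdot)\1_{D^n_j}]}{P(D^n_j)}\,\1_{D^n_j}(\omega),
\]
with the $j$-th summand read as $0$ when $P(D^n_j)=0$, defines a version of $E^P[f(P,\cdot)\,|\,\cG_n]$ that is jointly $\cB(\fP)\otimes\cG_n$-measurable, because each numerator $P\mapsto E^P[f(P,\cdot)\1_{D^n_j}]$ is measurable by the first assertion applied to the jointly measurable function $f\cdot\1_{\fP\times D^n_j}$. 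By L\'evy's upward theorem $g_n(P,\cdot)\to E^P[f(P,\cdot)\,|\,\cF_t]$ $P$-a.s.\ for each fixed $P$, so $g(P,\omega):=\limsup_n g_n(P,\omega)$ is the desired version: it is $\widehat{\cF}_t$-measurable jointly (a $\limsup$ of $\cB(\fP)\otimes\cG_n$-measurable functions), $\cF_t$-measurable for fixed $P$, and agrees $P$-a.s.\ with the conditional expectation. I extend to $f\geq 0$ by truncating $f\wedge m$ and taking $\limsup_m$ of the bounded versions (conditional monotone convergence), and to general $f$ through $f^\pm$ and the convention. The $\cF_{t+}$-case does \emph{not} require separability of $\cF_{t+}$: I apply the previous construction at each $s_n:=t+1/n$ to get jointly $\widehat{\cF}_{s_n}$-measurable versions $g_{s_n}$, and set $g_+(P,\omega):=\limsup_n g_{s_n}(P,\omega)$. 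By the downward martingale convergence theorem and $\bigcap_n\cF_{s_n}=\cF_{t+}$, this equals $E^P[f(P,\cdot)\,|\,\cF_{t+}]$ $P$-a.s.\ and is $\cF_{t+}$-measurable for fixed $P$; jointly it is $\bigcap_n\widehat{\cF}_{s_n}$-measurable, and since $\widehat{\cF}_{t+}=(\widehat{\F}_+)_t=\bigcap_{u>t}\widehat{\cF}_u=\bigcap_n\widehat{\cF}_{s_n}$, this is exactly $\widehat{\cF}_{t+}$-measurability.

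The main obstacle is conceptual rather than computational: selecting versions of a $P$-a.s.\ defined object that are measurable in the measure $P$ and correct for every $P$ simultaneously. The resolving device is to realize each conditional expectation as a $\limsup$ of explicit finite-$\sigma$-field averages: $\limsup$ is a jointly measurable operation and, by martingale convergence, recovers the correct version $P$-by-$P$, while the averages are themselves jointly measurable only thanks to the first assertion, where $P$ enters both as the integrator and as the measurable parameter. A minor point worth flagging is the identification $\widehat{\cF}_{t+}=\bigcap_n\widehat{\cF}_{t+1/n}$, which is what makes the reverse-martingale construction land in the right $\sigma$-field without having to compare $\cB(\fP)\otimes\cF_{t+}$ with $\bigcap_n\big(\cB(\fP)\otimes\cF_{t+1/n}\big)$.
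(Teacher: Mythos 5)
Your proof is correct and follows essentially the same route as the paper's: a monotone class (Dynkin) argument for the measurability of $P\mapsto E^P[f(P,\cdot)]$, finite-partition averages built from the separability of $\cF_t$ whose $\limsup$ yields a jointly measurable version of $E^P[f(P,\cdot)\,|\,\cF_t]$ via martingale convergence, and backward martingale convergence along $t+1/n$ for the $\cF_{t+}$-case, landing in $\widehat{\cF}_{t+}=\bigcap_n\widehat{\cF}_{t+1/n}$ exactly as the paper does. The only differences are cosmetic: you run the monotone class argument on indicators of rectangles rather than on product functions $g(P)h(\omega)$, and you spell out the truncation to unbounded $f$ where the paper simply reduces to the bounded case.
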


\begin{proof}
  It suffices to consider the case where $f$ is bounded. We first show that $P\mapsto E^P[f(P,\cdot)]$ is measurable. By a monotone class argument, it suffices to consider a function $f$ of the form
  $f(P,\omega)=g(P)h(\omega)$, where $g$ and $h$ are measurable. In this case, $P\mapsto E^P[f(P,\cdot)]= g(P)\,E^P[h]$, and $P\mapsto E^P[h]$ is measurable due to \cite[Proposition~7.25, p.\,133]{BertsekasShreve.78}.

  The construction of the conditional expectation follows the usual scheme. Fix $t\geq 0$, let $(A_n)_{n\in \N}$ be a sequence generating $\cF_t$ and let $(A_n^m)_m$ be a finite partition generating $\cA_n:=\sigma(A_1,\dots,A_n)$. Using the supermartingale convergence theorem as in \cite[V.56, p.\,50]{DellacherieMeyer.82} and the convention $0/0=0$, we can define a version of the conditional expectation given $\cF_t$ by
  \begin{equation*}
    E^P[f(P,\cdot)\, | \, \cF_t ]:= \limsup_{n \to \infty} \sum_{m} \frac{E^P[f(P,\cdot) \,\1_{A_n^m}]}{P[A_n^m]} \,\1_{A_n^m}.
  \end{equation*}
  In view of the first part, this function is $\widehat{\cF}_t$-measurable, and $\cF_t$-measurable for fixed $P$. Finally, using the backward martingale convergence theorem,
  \begin{equation*}
    E^P[f(P,\cdot)\, | \, \cF_{t+} ] :=\limsup_{n \to \infty} E^P[f(P,\cdot)\, | \, \cF_{t+1/n} ]
  \end{equation*}
  is a version of the conditional expectation given $\cF_{t+}$ having the desired properties.
\end{proof}

In what follows, we shall always use the measurable versions of the conditional expectations as in Lemma~\ref{le:BorelcondexpPP}.

\begin{lemma}\label{le:L1messbar}
  Let $f^n: \widehat{\Omega} \to \overline{\R}^d$ be measurable functions such that $f^n(P,\cdot)$ is a convergent sequence in $L^1(P)$ for every $P\in\fP$.
  There exists a measurable function $f:\widehat{\Omega} \to \overline{\R}^d$ such that $f(P,\cdot)=\lim_n f^n(P,\cdot)$ in $L^1(P)$ for every $P\in\fP$. Moreover, there exists an increasing sequence $(n_k^P)_k\subseteq \N$ such that $P\mapsto n_k^P$ is measurable and $\lim_k f^{n_k^P}(P,\cdot)=f(P,\cdot)$ $P$-a.s.\ for all $P\in\fP$.
\end{lemma}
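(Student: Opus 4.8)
The plan is to produce the limit $f$ by applying the measurable conditional-expectation machinery of Lemma~\ref{le:BorelcondexpPP} to a suitable auxiliary function, and then to extract the measurable subsequence by a Borel-selection argument indexed by $P$. Since $f^n(P,\cdot)$ converges in $L^1(P)$ for each fixed $P$, the natural candidate for the limit is not the pointwise $\limsup$ (which need not coincide with the $L^1$-limit), so I would instead define
\[
  f(P,\omega):=\limsup_{m\to\infty}\limsup_{n\to\infty} E^P\big[f^n(P,\cdot)\,\big|\,\cF_{t_m}\big](\omega)
\]
along an exhausting sequence $t_m\uparrow\infty$ — or, more simply, work coordinatewise and set $f(P,\cdot):=\limsup_k f^{n_k^P}(P,\cdot)$ once the measurable subsequence has been constructed. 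In fact the cleanest route is to reverse the two claims: first build the measurable subsequence $(n_k^P)_k$ along which one has $P$-a.s.\ convergence, and then simply \emph{define} $f(P,\omega):=\limsup_k f^{n_k^P(P)}(P,\omega)$, which is automatically measurable as a $\limsup$ of measurable functions (using that $P\mapsto n_k^P$ is measurable, so $(P,\omega)\mapsto f^{n_k^P(P)}(P,\omega)$ is measurable), is $P$-a.s.\ equal to the pointwise limit along the subsequence, and hence equals the $L^1(P)$-limit.

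So the real content is the construction of $(n_k^P)_k$. Here I would exploit that $L^1$-convergence of $(f^n(P,\cdot))_n$ implies it is $L^1(P)$-Cauchy, and choose a subsequence converging fast enough that Borel–Cantelli forces $P$-a.s.\ convergence. Concretely, I would define $n_k^P$ recursively by
\[
  n_k^P:=\inf\Big\{\, n> n_{k-1}^P \;\Big|\; \sup_{j\geq n}\,E^P\big[\,|f^j(P,\cdot)-f^n(P,\cdot)|\wedge 1\,\big]\leq 2^{-k}\,\Big\},
\]
with $n_0^P:=0$. For each fixed $P$ this infimum is finite by the Cauchy property, and the resulting subsequence satisfies $\sum_k E^P[|f^{n_{k+1}^P}-f^{n_k^P}|\wedge 1]<\infty$, so $\sum_k |f^{n_{k+1}^P}-f^{n_k^P}|<\infty$ $P$-a.s., whence $(f^{n_k^P}(P,\cdot))_k$ converges $P$-a.s. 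The key measurability input is that, for fixed integers $i,j$, the map $P\mapsto E^P[|f^j(P,\cdot)-f^i(P,\cdot)|\wedge 1]$ is measurable by the first part of Lemma~\ref{le:BorelcondexpPP} (applied to the bounded measurable function $(P,\omega)\mapsto |f^j(P,\omega)-f^i(P,\omega)|\wedge 1$). Consequently each set $\{P: n_k^P=n\}$ is determined by countably many such measurable constraints (a countable supremum over $j\geq n$ being measurable), so $P\mapsto n_k^P$ is measurable, and by induction so is the whole sequence. The truncation by $\wedge 1$ is there to keep every expectation finite and bounded, which is what makes the measurability statement of Lemma~\ref{le:BorelcondexpPP} applicable and makes the $\inf$ well-defined; it does not affect a.s.\ convergence.

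The main obstacle I anticipate is precisely the interplay between measurability in $P$ and the subsequence extraction: a naive diagonal argument picks $n_k^P$ in a way that may not be measurable, and the standard Alaoglu/Cauchy selection arguments are inherently nonconstructive. The fix above — defining $n_k^P$ as an \emph{explicit infimum} of $n$ over a condition built from $P$-measurable quantities, rather than merely asserting existence — is what restores measurability, and this is the same philosophy the paper advertises in the introduction (providing ``measurable versions'' of classical compactness/selection steps). A secondary point requiring care is that the limit $f$ should take values in $\overline{\R}^d$ and that the $L^1(P)$-limit is identified with the a.s.\ limit along $(n_k^P)$: since the full sequence $f^n(P,\cdot)$ is $L^1(P)$-convergent and a subsequence converges $P$-a.s.\ to $f(P,\cdot)$, the $L^1(P)$-limit must coincide with $f(P,\cdot)$ $P$-a.s., so $f(P,\cdot)=\lim_n f^n(P,\cdot)$ in $L^1(P)$ as required. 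Finiteness $P$-a.s.\ of $f(P,\cdot)$ is immediate from integrability of the $L^1$-limit.
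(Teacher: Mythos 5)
Your proposal is correct and is essentially the paper's own argument: the paper likewise defines the subsequence indices recursively as the least admissible $n$ (there via the condition $\|f^u(P,\cdot)-f^v(P,\cdot)\|_{L^1(P)}\le 2^{-k}$ for all $u,v\ge n$, measurable in $P$ by Lemma~\ref{le:BorelcondexpPP}), obtains $P$-a.s.\ convergence from summability of the consecutive differences, and then sets $f(P,\cdot):=\limsup_k f^{n_k^P}(P,\cdot)$ componentwise, identifying it with the $L^1(P)$-limit. Your only deviations --- truncating by $\wedge\,1$ and anchoring the Cauchy condition at $f^n$ rather than over all pairs of indices --- are cosmetic and harmless (the truncation is not even needed, since Lemma~\ref{le:BorelcondexpPP} already covers $\overline{\R}$-valued integrands).
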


\begin{proof}
  For $P\in\fP$, let $n_0^P:=1$ and define recursively
  \begin{align*}
  \tilde{n}_k^P &:= \min\big\{ n \in \N \, \big| \,  \Vert f^u(P,\cdot)- f^v(P,\cdot) \Vert_{L^1(P)} \leq 2^{-k} \ \mbox{for all} \ u,v \geq n \big\},\\
  n_k^P &:= \max\big\{\tilde{n}_k^P,n_{k-1}^P +1\big\}.
  \end{align*}
  It follows from Lemma~\ref{le:BorelcondexpPP} that $P\mapsto n^P_k$ is measurable, and so the composition $(P,\omega)\mapsto f^{n_k^P}(P,\omega)$ is again measurable.
  Moreover, we have
  \[
    \sum_{k\geq0} \Vert f^{n^P_{k+1}}(P,\cdot)-f^{n^P_k}(P,\cdot) \Vert_{L^1(P)}<\infty,\quad P\in\fP
  \]
  by construction, which implies that $(f^{n^P_k}(P,\cdot))_{k \in \N}$ converges $P$-a.s.
  Thus, we can set (componentwise)
  \begin{equation*}
    f(P,\omega):=\limsup_{k \to \infty} f^{n_k^P}(P,\omega)
  \end{equation*}
  to obtain a jointly measurable limit.
\end{proof}

The next result is basically a variant of Alaoglu's theorem in $L^2$ (or the Dunford--Pettis theorem in $L^1$, or Komlos' lemma) which yields measurability with respect to the underlying measure. It will be crucial to obtain measurable versions of the compactness arguments of semimartingale theory in the later sections. We denote by $\conv A$ the convex hull of a set $A\subseteq \R^d$.

\begin{proposition}\label{pr:mblKomlos}
  (i) Let $f^n:\fP \times \Omega \to \R^d$ be a sequence of measurable functions such that
  \begin{equation}\label{eq:L2KomlosAssump}
    \sup_{n \in \N} \Vert f^n(P,\cdot) \Vert_{L^2(P)}<\infty,\quad P \in \fP.
  \end{equation}
  Then there exist measurable functions
  $P\mapsto N_n^P \in \{n,n+1,\dots\}$ and $P\mapsto \lambda_i^{P,n}\in [0,1]$
  satisfying $\sum_{i=n}^{N^P_n} \lambda_i^{P,n}=1$ and  $\lambda^{P,n}_i=0$ for $i \notin \{n,...,N^P_n\}$ such that
  \[
    (P,\omega)\mapsto g^{P,n}(\omega):= \sum_{i=n}^{N^P_n} \lambda^{P,n}_i\,f^{i}(P,\omega) \ \in \conv\{f^n(P,\omega),f^{n+1}(P,\omega),...\}
  \]
  is measurable and $(g^{P,n})_{n\in\N}$ converges in $L^2(P)$ for all $P\in\fP$.\vspace{.5em}

  (ii) For each $m \in \N$,  let $(f_m^n)_{n \in \N}$ be a sequence as in~(i). Then there exist $N_n^P$ and $\lambda^{P,n}_i$ as in (i) such that
  \begin{equation*}
    (P,\omega)\mapsto g^{P,n}_m(\omega):= \sum_{i=n}^{N^P_n} \lambda^{P,n}_i\,f^i_m(P,\omega) \ \in \conv\{f^n_m(P,\omega),f^{n+1}_m(P,\omega),...\}
  \end{equation*}
  is measurable and $(g^{P,n}_m)_{n\in\N}$ converges in $L^2(P)$ for all $P\in\fP$ and $m\in\N$.\vspace{.5em}

  (iii) Let $f^n:\fP \times \Omega \to \R^d$ be measurable functions such that
  \[
    \{f^n(P,\cdot)\}_{n\in\N}\subseteq L^1(P)\quad\mbox{is uniformly integrable},\quad P\in\fP.
  \]
  Then the assertion of (i) holds with convergence in $L^1(P)$ instead of $L^2(P)$.
\end{proposition}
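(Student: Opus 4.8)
The plan is to treat part~(i) as a measurable version of the Hilbert-space fact that a norm-bounded sequence admits convex combinations of its tails converging \emph{strongly}, and then to bootstrap parts~(ii) and~(iii) from it. First I would isolate the only input needed for measurability: for any indices $i,j$ the map $P\mapsto E^P[f^i(P,\cdot)\cdot f^j(P,\cdot)]$ is measurable by Lemma~\ref{le:BorelcondexpPP} (applied to $(P,\omega)\mapsto f^i(P,\omega)\cdot f^j(P,\omega)$ on $\widehat\Omega$), so that for every finitely supported coefficient vector $q=(q_i)$ the quantity $\Phi^P(q):=\Vert\sum_i q_i f^i(P,\cdot)\Vert_{L^2(P)}^2$ is measurable in $P$. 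Fixing $n$, I let $Q_n$ be the countable set of all rational convex combinations of $\{f^i:i\ge n\}$ and set
\[
  a_n^P:=\inf_{q\in Q_n}\Phi^P(q).
\]
By continuity of $q\mapsto\Phi^P(q)$ on each finite simplex and density of the rationals, $a_n^P$ equals the infimum of $\Vert\cdot\Vert_{L^2(P)}^2$ over the full convex hull $\conv\{f^i(P,\cdot):i\ge n\}$, and it is measurable in $P$ as a countable infimum.

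The delicate point is to select a near-minimizer \emph{measurably} in $P$ without sacrificing strong convergence. Rather than invoke a measurable-selection theorem, I would enumerate $Q_n=\{q^{(1)},q^{(2)},\dots\}$ once and for all and put $j^P:=\min\{j:\Phi^P(q^{(j)})\le a_n^P+2^{-n}\}$, which is measurable since each defining condition is. Declaring $g^{P,n}:=\sum_i q^{(j^P)}_i f^i(P,\cdot)$ then yields measurable coefficients $\lambda^{P,n}_i:=q^{(j^P)}_i$ supported in $\{n,\dots,N^P_n\}$, a measurable upper index $N^P_n$, and a jointly measurable $g^{P,n}\in\conv\{f^i(P,\cdot):i\ge n\}$ with $\Vert g^{P,n}\Vert_{L^2(P)}^2\le a_n^P+2^{-n}$. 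For convergence I use the parallelogram law: for $m\le n$ both $g^{P,m},g^{P,n}$ lie in $\conv\{f^i:i\ge m\}$, hence so does their midpoint, whence $\Vert(g^{P,m}+g^{P,n})/2\Vert_{L^2(P)}^2\ge a_m^P$ and
\[
  \Vert g^{P,m}-g^{P,n}\Vert_{L^2(P)}^2\le 2(a_n^P-a_m^P)+2^{-m+1}+2^{-n+1}.
\]
Since $(a_k^P)_k$ is nondecreasing and bounded by $\sup_k\Vert f^k(P,\cdot)\Vert_{L^2(P)}^2<\infty$, it is Cauchy, so $(g^{P,n})_n$ is Cauchy and converges in $L^2(P)$. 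This proves~(i).

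For part~(ii) the only new issue is that one coefficient vector must serve all families $m$ simultaneously, which I handle by running the same construction in a weighted Hilbert sum. With $S_m^P:=\sup_n\Vert f_m^n(P,\cdot)\Vert_{L^2(P)}<\infty$ (measurable) and weights $w_m^P:=2^{-m}/(1+(S_m^P)^2)>0$, I replace $\Phi^P$ by
\[
  \Psi^P(q):=\sum_{m\in\N} w_m^P\,\Big\Vert\sum_i q_i f_m^i(P,\cdot)\Big\Vert_{L^2(P)}^2,
\]
which is again measurable in $P$ and satisfies $\Psi^P(\delta_n)\le 1$ for a single unit mass $\delta_n$. Being a sum of squared Hilbert norms, $\Psi^P$ still obeys the parallelogram identity, so the verbatim argument of~(i) produces measurable coefficients $\lambda^{P,n}_i$—the \emph{same} for every $m$—with $\sum_m w_m^P\Vert g^{P,n}_m-g^{P,n'}_m\Vert_{L^2(P)}^2\to0$ as $n,n'\to\infty$, where $g^{P,n}_m:=\sum_i\lambda^{P,n}_i f^i_m(P,\cdot)$. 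Since each $w_m^P>0$, this forces $(g^{P,n}_m)_n$ to be Cauchy, hence convergent, in $L^2(P)$ for every $m$.

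Finally, part~(iii) reduces to~(ii) by truncation. Setting $f^{n,K}:=f^n\1_{\{|f^n|\le K\}}$, each family $(f^{n,K})_n$ is bounded in $L^2(P)$ by $K$, so applying~(ii) across $K\in\N$ gives common measurable coefficients for which $g^{P,n}_K:=\sum_i\lambda^{P,n}_i f^{i,K}(P,\cdot)$ converges in $L^2(P)$, hence in $L^1(P)$, for each $K$. Writing $g^{P,n}:=\sum_i\lambda^{P,n}_i f^i(P,\cdot)$ and $\delta_K^P:=\sup_i\Vert f^i(P,\cdot)-f^{i,K}(P,\cdot)\Vert_{L^1(P)}$, uniform integrability yields $\delta_K^P\to0$, while $\Vert g^{P,n}-g^{P,n}_K\Vert_{L^1(P)}\le\delta_K^P$ uniformly in $n$; a three-term estimate (two truncation errors bounded by $\delta_K^P$ plus the Cauchy middle term from~(ii)) then shows $(g^{P,n})_n$ is Cauchy in $L^1(P)$. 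The main obstacle throughout is the first one—choosing the convex combinations measurably in $P$ while keeping genuine strong convergence—and the rational-enumeration device together with the parallelogram law is precisely what resolves it.
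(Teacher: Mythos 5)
Your proof is correct, and its core mechanism---minimizing the $L^2(P)$-norm over convex combinations of tails and extracting the Cauchy property from the parallelogram identity---is exactly the paper's argument; the differences lie in two implementation choices, both legitimate. First, for the measurable choice of near-minimizers in (i), the paper takes an exact minimizer over a finite set $\Lambda^n_{k^{P,n}}$ of rational coefficient vectors, where the cutoff $k^{P,n}$ depends measurably on $P$, and then invokes a measurable selection theorem in the Polish space $[0,1]^\N$; your device of fixing one enumeration of all rational convex combinations and taking the \emph{first} $2^{-n}$-approximate minimizer achieves the same measurability by inspection and dispenses with the selection theorem entirely---a genuine simplification, since approximate minimizers are all the parallelogram argument needs (the paper, too, only uses $\Vert g^{P,n}\Vert_{L^2(P)}\le \alpha^{P,n}+1/n$). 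Note also that the identification of your rational infimum $a_n^P$ with the infimum over the full hull is not even needed: midpoints of rational vectors are rational, which is all the Cauchy estimate uses. Second, for (ii) the paper appeals to a ``standard diagonal argument'' (iterating (i) across the families $m$ and diagonalizing, tracking measurability through each stage), whereas you run a single optimization of the weighted functional $\Psi^P(q)=\sum_m w_m^P\,\bigl\Vert\sum_i q_i f_m^i(P,\cdot)\bigr\Vert^2_{L^2(P)}$; since this is still a positive semidefinite quadratic form, the parallelogram argument applies verbatim, and positivity of the weights $w_m^P$ forces Cauchyness of every family simultaneously. This buys a self-contained proof of the required simultaneity without bookkeeping measurability through a diagonalization, at the modest cost of checking finiteness and measurability of $\Psi^P$, which your choice of weights handles. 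Part (iii)---truncation, applying (ii) to the truncated families, and transferring the Cauchy property to the untruncated combinations via uniform integrability---coincides with the paper's proof.
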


\begin{proof}
  (i) For $n\in\N$, consider the sets
  \[
    G^{P,n} = \conv \{f^n(P,\cdot),f^{n+1}(P,\cdot),...\},\quad P\in\fP.
  \]
  Moreover, for $k \in \N$, let $\Lambda^n_k$ be the (finite) set of all $\lambda=(\lambda_1,\lambda_2,\dots)\in[0,1]^\N$ such that $\sum_{i} \lambda_i =1$,
  \[
     \lambda_i= \frac{a_i}{b_i} \quad \mbox{ for some } a_i\in \{0,1,\dots,b_i\},\quad b_i\in \{1,2,\dots, k\}
  \]
  and $\lambda_i=0$ for $i\notin \{n,\dots,n+k\}$. Thus,
  \[
    g^P(\lambda):= \sum_{i\geq1} \lambda_i\,f^{i}(P,\cdot) \in G^{P,n}
  \]
  for all $\lambda\in \Lambda^n_k$. Let
  \[
   \alpha^{P,n}_k = \min\big\{ \Vert g^P(\lambda) \Vert_{L^2(P)} \, \big| \, \lambda\in \Lambda^n_k\big\},\quad
   \alpha^{P,n} = \inf\big\{\Vert g \Vert_{L^2(P)} \, \big| \, g \in G^{P,n} \big\}
  \]
  and $\alpha^P =\lim_{n} \alpha^{P,n}$; note that $(\alpha^{P,n})_n$ is increasing. We observe that any sequence $g^{P,n}\in G^{P,n}$ such that $\|g^{P,n}\|_{L^2(P)}\leq \alpha^{P,n} + 1/n$ is a Cauchy sequence in $L^2(P)$. Indeed, if $\eps>0$ is given and $n$ is large,
  then $\|(g^{P,k}+g^{P,l})/2\|_{L^2(P)}\geq \alpha^P -\eps $ for all $k,l\geq n$, which by the parallelogram identity yields that
  \[
    \|g^{P,k}-g^{P,l}\|^2_{L^2(P)} \leq 4(\alpha^{P,n} + 1/n)^2-4(\alpha^P-\eps)^2.
  \]
  As $\alpha^{P,n}$ tends to $\alpha^P$, this shows the Cauchy property.
  To select such a sequence in a measurable way, we first observe that $(\alpha^{P,n}_k)_k$ decreases to $\alpha^{P,n}$, due to~\eqref{eq:L2KomlosAssump}. Thus,
  \begin{equation*}
    k^{P,n}:=\min \big\{ k \in \N \, \big| \, |\alpha_k^{P,n}-\alpha^{P,n}|\leq 1/n\big\}
  \end{equation*}
  is well defined and finite.
  As a consequence of Lemma~\ref{le:BorelcondexpPP}, $P\mapsto (\alpha_k^{P,n},\alpha^{P,n})$ is measurable, and this implies that $P\mapsto k^{P,n}$ is measurable. Applying a selection theorem in the Polish space $[0,1]^\N$ (e.g., \cite[Theorem~18.13, p.\,600]{AliprantisBorder.06}), we can find for each $n$ a measurable minimizer $P\mapsto \widehat{\lambda}^{P,n}$ in the (finite) set $\Lambda^n_{k^{P,n}}$ such that
  \[
    \Vert g^P(\widehat{\lambda}^{P,n}) \Vert_{L^2(P)} = \alpha^{P,n}_{k^{P,n}} \equiv \min\big\{ \Vert g^P(\lambda) \Vert_{L^2(P)} \, \big| \, \lambda\in \Lambda^n_{k^{P,n}}\big\}.
  \]
  According to the above, $g^P(\widehat{\lambda}^{P,n})$ is Cauchy and so the result follows by setting $N^P_n=n + k^{P,n}$.

  (ii) This assertion follows from~(i) by a standard ``diagonal argument.''

  (iii) For $m,n \in \N$, define the function $f_m^n:\fP \times \Omega \to \R^d$ by
  \[
    f_m^n(P,\omega):=f^n(P,\omega)\,\1_{\{|f^n(P,\omega)|\leq m\}}.
  \]
  Then $\sup_{n \in \N} \Vert f^n_m(P,\cdot) \Vert_{L^2(P)}<\infty$ for each $m$. Thus, for each $m$, (ii) yields an $L^2(P)$-convergent sequence
  \begin{equation*}
    g^{P,n}_m = \sum_{i=n}^{N^P_n} \lambda^{P,n}_i\,f^{i}_m(P,\cdot)
  \end{equation*}
  with suitably measurable coefficients. We use the latter to define
  \begin{equation*}
  g^{P,n}:= \sum_{i=n}^{N^P_n} \lambda^{P,n}_i\,f^{i}(P,\cdot).
  \end{equation*}
  By the assumed uniform integrability, we have
  \[
    \lim_{m\to\infty}\sup_{n\geq1 }\|f_m^n(P,\cdot)-f^n(P,\cdot)\|_{L^1(P)}=0,\quad P\in\fP;
  \]
  thus, the Cauchy property of $(g^{P,n})_n$ in $L^1(P)$ follows from the corresponding property of the sequences $(g^{P,n}_m)_n$.
\end{proof}

The last two lemmas in this section are observations about the measurability of processes and certain right-continuous modifications.

\begin{lemma}\label{le:productm}
  Let $f:\fP\times \Omega \times \R_+ \to \overline{\R}$ be such that $f(\cdot,\cdot,t)$  is $\widehat{\cF}_t$-measurable for all $t$ and $f(P,\omega,\cdot)$ is right-continuous for all $(P,\omega)$. Then $f$ is measurable and $f|_{\fP\times \Omega \times[0,t]}$ is $\widehat{\cF}_t \otimes \mathcal{B}([0,t])$-measurable for all $t \in \R_+$.

  The same assertion holds if $\widehat{\cF}_t$ is replaced by $\widehat{\cF}_{t+}$ throughout.
\end{lemma}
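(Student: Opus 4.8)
The plan is to recognize the statement as the classical fact that a right-continuous adapted process is progressively measurable, transported to the enlarged filtered space $(\widehat\Omega,\widehat\cF,\widehat\F)$ of \eqref{eq:defHatOmega}: viewing $f$ as a process on $\widehat\Omega$ indexed by $\R_+$, the hypotheses say precisely that $f(\cdot,\cdot,t)$ is $\widehat\cF_t$-measurable (i.e.\ $\widehat\F$-adapted) and that every path $t\mapsto f(P,\omega,t)$ is right-continuous. The heart of the matter is the local claim, namely that $f|_{\fP\times\Omega\times[0,t]}$ is $\widehat\cF_t\otimes\cB([0,t])$-measurable for each fixed $t$; once this is established, global measurability follows by writing $\fP\times\Omega\times\R_+=\bigcup_{n}(\fP\times\Omega\times[0,n])$ and noting that a function which is measurable on each member of a countable measurable cover of its domain is measurable (here each restriction is $\widehat\cF_n\otimes\cB([0,n])$-measurable, hence measurable for the trace of $\cB(\fP)\otimes\cF\otimes\cB(\R_+)$).

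For the local claim I would approximate $f$ from the right along a dyadic grid. Fix $t>0$ and, for $n\in\N$, let $\phi_n(s)=2^{-n}t\,\lceil 2^n s/t\rceil$ denote the smallest point of the grid $\{0,t/2^n,\dots,t\}$ that is $\ge s$, and set
\[
  f^n(P,\omega,s):=f\big(P,\omega,\phi_n(s)\big)
  =\sum_{k=1}^{2^n} f\big(P,\omega,\tfrac{k}{2^n}t\big)\,\1_{(\frac{k-1}{2^n}t,\,\frac{k}{2^n}t]}(s)+f(P,\omega,0)\,\1_{\{0\}}(s)
\]
for $s\in[0,t]$. Each coefficient $f(\cdot,\cdot,\tfrac{k}{2^n}t)$ is $\widehat\cF_{\frac{k}{2^n}t}$-measurable and hence $\widehat\cF_t$-measurable, because $\tfrac{k}{2^n}t\le t$; since the time-indicators are Borel and the sum is finite, $f^n$ is $\widehat\cF_t\otimes\cB([0,t])$-measurable.

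Since $\phi_n(s)\ge s$ and $\phi_n(s)-s\le t/2^n\to0$, we have $\phi_n(s)\downarrow s$, so right-continuity of $f(P,\omega,\cdot)$ yields $f^n(P,\omega,s)\to f(P,\omega,s)$ for every $(P,\omega,s)\in\fP\times\Omega\times[0,t]$. A pointwise limit of measurable functions is measurable (the $\overline{\R}$-valued case causing no trouble), which proves the local claim and therefore the first assertion. For the variant with $\widehat\cF_{t+}$ in place of $\widehat\cF_t$, the identical construction applies: for an interior grid point $\tfrac{k}{2^n}t<t$ one has $\widehat\cF_{(\frac{k}{2^n}t)+}\subseteq\widehat\cF_t\subseteq\widehat\cF_{t+}$, while the endpoint value $f(\cdot,\cdot,t)$ is $\widehat\cF_{t+}$-measurable by hypothesis, so each $f^n$ is $\widehat\cF_{t+}\otimes\cB([0,t])$-measurable and the same passage to the limit goes through. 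I do not expect a genuine obstacle here; the only points that require care are the use of adaptedness together with $\tfrac{k}{2^n}t\le t$ to keep the coefficients $\widehat\cF_t$-measurable, and, in the right-continuous variant, the separate treatment of the endpoint $t$.
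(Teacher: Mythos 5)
Your proof is correct and is essentially the paper's own argument: the paper simply cites the standard fact that a right-continuous adapted process is progressively measurable, applied on the extended space $(\widehat\Omega,\widehat\cF,\widehat\F)$, and your dyadic right-approximation is exactly the classical proof of that fact, including the correct handling of the endpoint in the $\widehat\cF_{t+}$ variant.
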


\begin{proof}
  This is simply the standard fact that a right-continuous, adapted process is progressively measurable, applied on the extended space $\widehat{\Omega}$.
\end{proof}

Finally, we state a variant on a regularization for processes in right-continuous but non-complete filtrations. As usual, the price to pay for the lack of completion is that the resulting paths are not c\`adl\`ag in general.

\begin{lemma}\label{le:cadlagversionm}
  Let $f:\fP\times \Omega \times \R_+ \to\R$ be such that $f(\cdot,\cdot,t)$ is $\widehat{\cF}_{t+}$-measurable for all $t$. There exists a measurable function $\bar{f}:\fP \times \Omega \times \R_+ \to \R$ such that $\bar{f}$ is $\widehat{\F}_+$-optional, $\bar{f}(P,\omega,\cdot)$ is right-continuous for all $(P,\omega)$, and for any $P \in \fP$ such that $f(P,\cdot,\cdot)$ is an $\F_+$-adapted $P$-$\F_+$-supermartingale with right-continuous expectation $t \mapsto E^P[f(P,\cdot,t)]$, the process $\bar{f}(P,\cdot,\cdot)$ is an $\F_+$-adapted $P$-modification of $f(P,\cdot,\cdot)$ and in particular a $P$-$\F_+$-supermartingale.
\end{lemma}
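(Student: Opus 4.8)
The strategy is to run the classical regularization of a supermartingale by its right limits along the rationals, but to perform every operation on the product space $\widehat\Omega=\fP\times\Omega$ so that measurability in $P$ is automatic, and to replace the usual ``discard a $P$-nullset'' step---which is unavailable without completion---by an \emph{adapted pathwise cut-off}. Concretely, I would first set
\[
  h(P,\omega,t):=\limsup_{s\downarrow t,\, s\in\Q} f(P,\omega,s),\qquad t\ge0 .
\]
Writing $h(\cdot,\cdot,t)=\lim_n \sup\{f(\cdot,\cdot,s):s\in\Q,\ t<s<t+1/n\}$ exhibits $h(\cdot,\cdot,t)$ as a countable operation on the $\widehat\cF_{s+}$-measurable maps $f(\cdot,\cdot,s)$ with $s<t+1/n$, hence as $\bigcap_n\widehat\cF_{t+1/n}=\widehat\cF_{t+}$-measurable; thus $h$ is $\widehat\F_+$-adapted. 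On the set where the rational path $s\mapsto f(P,\omega,s)$ is bounded on compacts and has right limits everywhere, $h$ coincides with the genuine right limit and is c\`adl\`ag. For pathological $(P,\omega)$, however, $t\mapsto h(P,\omega,t)$ is only upper semicontinuous from the right and \emph{need not} be right-continuous, so $h$ by itself does not yet meet the requirement that the paths be right-continuous for \emph{all} $(P,\omega)$.

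To repair this I would introduce monotone, adapted functionals that detect irregularity. For rationals $a<b$ let $U^{a,b}_t(P,\omega)$ be the number of upcrossings of $(a,b)$ by $\{f(P,\omega,s):s\in\Q\cap[0,t]\}$, and let $S_t$, $I_t$ be the running supremum and infimum over the same set; each is monotone in $t$ and, being a countable operation on values at rationals $\le t$, is $\widehat\cF_{t+}$-measurable. Let $G_t$ be the event that all $U^{a,b}_t$ are finite and that $S_t<\infty$, $I_t>-\infty$. Then $(G_t)_{t\ge0}$ is decreasing with $G_t\in\widehat\cF_{t+}$, so $\tau:=\inf\{t\ge0:(P,\omega)\notin G_t\}$ has $\{t<\tau\}=\{\tau>t\}=\bigcup_n\bigcap_{s\in\Q,\,s<t+1/n}G_s\in\widehat\cF_{t+}$; hence $\tau$ is an $\widehat\F_+$-stopping time and $\1_{\{t<\tau\}}$ is $\widehat\F_+$-adapted with right-continuous paths $\1_{[0,\tau)}(\cdot)$. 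I then define
\[
  \bar f(P,\omega,t):=\begin{cases} h(P,\omega,t), & t<\tau(P,\omega),\\[2pt] 0, & t\ge\tau(P,\omega).\end{cases}
\]
On $[0,\tau)$ the rational path is bounded and regulated, so there $\bar f=h$ is the right-limit function and is c\`adl\`ag; on $[\tau,\infty)$ it equals $0$; and at $\tau$ the value $0$ matches the right limit $0$. Therefore $\bar f(P,\omega,\cdot)$ is right-continuous for \emph{every} $(P,\omega)$, while $\bar f(\cdot,\cdot,t)=h(\cdot,\cdot,t)\1_{\{t<\tau\}}$ is $\widehat\cF_{t+}$-measurable. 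Being right-continuous and $\widehat\F_+$-adapted, $\bar f$ is $\widehat\F_+$-optional, and Lemma~\ref{le:productm} gives its joint measurability; since $\widehat\cF_{t+}$-measurability forces $\cF_{t+}$-measurability of each section, $\bar f(P,\cdot,\cdot)$ is automatically $\F_+$-adapted.

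It remains to verify the modification property for a \emph{good} $P$, i.e.\ one for which $f(P,\cdot,\cdot)$ is an $\F_+$-adapted $P$-$\F_+$-supermartingale with right-continuous expectation. By Doob's upcrossing and maximal inequalities (see \cite{DellacherieMeyer.82}), $P$-a.s.\ the rational path is bounded on compacts with finitely many upcrossings of every rational interval, so $\tau=\infty$ and $\bar f(P,\omega,t)=f(P,\omega,t+):=\lim_{s\downarrow t}f(P,\omega,s)$ for all $t$, $P$-a.s. The supermartingale inequality gives $f(P,\cdot,t+)\le f(P,\cdot,t)$ $P$-a.s.; on the other hand $\{f(P,\cdot,s):s\in\Q\cap(t,t+1]\}$ is uniformly integrable, whence $E^P[f(P,\cdot,t+)]=\lim_{s\downarrow t}E^P[f(P,\cdot,s)]=E^P[f(P,\cdot,t)]$ by the assumed right-continuity of the expectation. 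Combining, $\bar f(P,\cdot,t)=f(P,\cdot,t)$ $P$-a.s.\ for each $t$, so $\bar f(P,\cdot,\cdot)$ is an $\F_+$-adapted $P$-modification of $f(P,\cdot,\cdot)$ and, as a right-continuous modification of a supermartingale, is itself a $P$-$\F_+$-supermartingale. The main obstacle throughout is exactly the reconciliation of \emph{everywhere} right-continuity with adaptedness in the absence of completion: the $\limsup$-regularization is adapted for free but fails right-continuity off the good set, and the remedy is to cut the path to $0$ at the adapted irregularity time $\tau$ built from the monotone upcrossing and boundedness functionals, $\tau$ being a.s.\ infinite precisely for the supermartingale-type $P$ of interest.
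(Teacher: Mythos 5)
Your proposal is correct and follows essentially the same route as the paper's proof: both take the $\limsup$ of $f$ along a countable dense set and multiply by the indicator of $\{t<\rho\}$, where $\rho$ (your $\tau$) is the adapted irregularity time built from the upcrossing and boundedness functionals of the rational path, and both then invoke the classical supermartingale regularization argument (Doob's inequalities, uniform integrability, right-continuity of $t\mapsto E^P[f(P,\cdot,t)]$) to get the modification property for the good measures $P$. The only difference is presentational: the paper delegates the verification to the argument of \cite[Remark~VI.5, p.\,70]{DellacherieMeyer.82}, whereas you spell out the measurability of the cut-off time and the identification $E^P[f(P,\cdot,t+)]=E^P[f(P,\cdot,t)]$ explicitly.
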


\begin{proof}
  Let $D$ be a countable dense subset of $\R_+$. For any $a<b \in \R$ and $t \in \R_+$, denote by $M_a^b(D\cap [0,t],P,\omega)$ the number of upcrossings of the restricted path $f(P,\omega,\cdot)|_{D\cap [0,t]}$ over the interval $[a,b]$. Moreover, let
  \begin{align*}
  \tau_a^b(P,\omega)&=\inf\big\{t \in   \Q_+ \, \big| \, M_a^b(D\cap [0,t],P,\omega)=\infty \big\},\\
  \sigma(P,\omega)&= \inf \big\{ t \in  \Q_+ \, \big| \, \sup_{s \leq t,\, s \in D} |f(P,\omega,s)|=\infty \big\},\\
  \rho(P,\omega)&= \sigma(P,\omega) \wedge \inf_{a<b \in \Q} \tau_a^b(P,\omega)
  \end{align*}
  and define the function $\bar{f}$ by
  \begin{equation*}
  \bar{f}(P,\omega,t):= \bigg(\limsup_{s \in D,\, s\downarrow t} f(P,\omega,s)\bigg)\,\1_{\{t< \rho(P,\omega)\}}.
  \end{equation*}
  Using the arguments in the proof of \cite[Remark~VI.5, p.\,70]{DellacherieMeyer.82}, we can verify that $\bar{f}$ has the desired properties.
\end{proof}

\section{Semimartingale Property and $\fP_{sem}$}\label{se:semimartPropertyAndPsem}

In this section, we prove Proposition~\ref{prop:SemFunab} and the measurability of $\fP_{sem}$.

\begin{proof}[Proof of Proposition~\ref{prop:SemFunab}]
  Let $X$ be a c\`adl\`ag, $\F$-adapted process on a filtered probability space $(\Omega,\cF,\F,P)$. We begin with the equivalence of
  \begin{enumerate}[topsep=7pt, partopsep=0pt, itemsep=2pt,parsep=2pt]
    \item $X$ is an $\F$-semimartingale,
    \item $X$ is an $\F_+$-semimartingale,
    \item $X$ is an $\F^P_+$-semimartingale.
  \end{enumerate}
  To see that (i) implies (iii), let $X= X_0 +M+A$ be an $\mathbb{F}$-semimartingale, where $M$ is a right-continuous $\mathbb{F}$-local martingale and $A$ is a right-continuous $\mathbb{F}$-adapted process with paths of $P$-a.s.\ finite variation. The same decomposition is admissible in $\mathbb{F}^P_{+}$; to see this, note that any right-continuous $\mathbb{F}$-martingale $N$ is also an $\mathbb{F}^P_{+}$-martingale: by the backward martingale convergence theorem,  $N_s=E^P[N_t|\cF_s]$ for $s\leq t$ implies
  \[
    N_s=N_{s+}=E^P[N_t|\cF_{s+}]=E^P[N_t|\cF_{s+}^P]\quad P\as, \quad s\leq t.
  \]

  Next, we show that (iii) implies (i). We observe that the process
  \[
    \widetilde{X}_t:=X_t- X_0 -\sum_{0\leq s\leq t} \Delta X_s \, \1_{\{|\Delta X_s|>1\}},
  \]
  is a semimartingale if and only if $X$ is. Thus, we may assume that $X_0=0$ and that $X$ has jumps bounded by one. In particular, $X$ then has a canonical decomposition $X= M+ B$, where $M$ is a right-continuous $\mathbb{F}_{+}^{P}$-local martingale and $B$ is a right-continuous $\mathbb{F}_{+}^{P}$-predictable process of finite variation. We can decompose the latter into a difference
  $B=B^1-B^2$ of increasing, right-continuous $\F^P_{+}$-predictable processes. By \cite[Lemma~6.5.10, p.\,143]{vonWeizsackerWinkler.90}, there exist right-continuous, $P$-a.s.\ increasing and $\F$-predictable processes $\hat{B}^1$ and $\hat{B}^2$ which are indistinguishable from $B^1$ and $B^2$, respectively. Define $\hat{B}= \hat{B}^1-\hat{B}^2$; then $\hat{B}$ is $\F$-predictable, right-continuous and $P$-a.s.\ of finite variation, and of course indistinguishable from $B$.

  As a consequence, $\hat{M} := X-\hat{B}$ is right-continuous, $\F$-adapted and indistinguishable from $M$; in particular, it is still an $\mathbb{F}_+^P$-local martingale. By~\cite[Theorem~3]{Dellacherie.78}, there exists an $\mathbb{F}_+^P$-predictable localizing sequence $(\widetilde{\tau}_n)$ for $\hat{M}$. For any $\widetilde{\tau}_n$, there exists an $\mathbb{F}$-predictable stopping time $\tau_n$ such that $\widetilde{\tau}_n=\tau_n$ $P$-a.s.; cf.\ \cite[Theorem~IV.78, p.\,133]{DellacherieMeyer.78}. Thus, the sequence $(\tau_n)$ is a localizing sequence of $\mathbb{F}$-stopping times for the $\mathbb{F}_+^P$-local martingale $\hat{M}$. Since $\hat{M}$ is $\F$-adapted, we deduce from the tower property of the conditional expectation that $\hat{M}$ is an $\F$-local martingale. As a result, $X=\hat{M} + \hat{B}$ is a decomposition as required and we have shown that~(iii) implies~(i).

  The equivalence between (ii) and (iii) now follows because we can apply the equivalence of (i) and (iii) to the filtration $\F':=\F_+$.

  It remains to show the indistinguishability of the characteristics. Let  $(B,C,\nu)$ be $\F$-characteristics of $X$  and let $(B',C',\nu')$ be $\F_{+}^P$-characteristics.
   The second characteristic is the continuous part of the quadratic variation $[X]$, which can be constructed pathwise $P$-a.s.\ (see the proof of Proposition~\ref{prop:quadvarunabP}) and thus is independent of the filtration. As a result, $C=C'$ $P$-a.s.
  To identify the first characteristic, consider the process
  \begin{equation*}
    \widetilde{X}_t:=X_t-\sum_{0\leq s \leq t} \big(\Delta X_s- h(\Delta X_s) \big).
  \end{equation*}
  As $\widetilde{X}$ has uniformly bounded jumps, it is an $\F$-special semimartingale. Let $\widetilde{X}= X_0 + \widetilde{M} + B$ be the canonical decomposition with respect to $\F$ (cf.\ \cite[Theorem~7.2.6, p.\,160]{vonWeizsackerWinkler.90}). By the arguments in the first part of the proof, this is also the canonical decomposition with respect to $\F_{+}^P$ and thus $B=B'$ $P$-a.s.\ by the definition of the first characteristic.

  Next, we show that $\nu=\nu'$ $P$-a.s. To this end, we may assume that $\nu$ is already the $\F$-predictable compensator of $\mu^X$. (The existence of the latter follows from \cite[Theorem~II.1.8, p.\,66]{JacodShiryaev.03} and~\cite[Lemma~7, p.\,399]{DellacherieMeyer.82}.) Let us check that $\nu$ is also a predictable random measure with respect to $\F_+^P$. Let $W^P=W^P(\omega,t,x)$ be a $\cP^P\otimes\B(\R^d)$-measurable function; we claim that $W^P$ is indistinguishable from a $\cP\otimes\B(\R^d)$-measurable function $W$, in the sense that the set $\{ \omega \in \Omega \, | \,  W (\omega,t,x) \neq W^P (\omega,t,x) \mbox{ for some } (t,x)\}$ is $P$-null. To see this, consider first the case where $W^P(\omega,t,x)=H^P(\omega,t) J(x)$ with $H^P$ being $\P^P$-measurable and $J$ being $\B(\R^d)$-measurable. By \cite[Lemma~7, p.\,399]{DellacherieMeyer.82}, there exists a $\cP$-measurable process $H$ indistinguishable from $H^P$ and thus $W(\omega,t,x)= H(\omega,t) J(x)$ has the desired properties. The general case follows by a monotone class argument.
  Since $\nu$ is a predictable random measure with respect to $\F$, the process defined by
  \begin{equation*}
    (W\ast \nu)_t:= \int_0^t \int_{\R^d} W(s,x)\,\nu(ds,dx)
  \end{equation*}
  is $\cP$-measurable. As a result, the indistinguishable process $(W^P\ast \nu)$ is $\P^P$-measurable, showing that $\nu$ is a predictable random measure with respect to~$\F_+^P$.

  To see that $\nu$ is the compensator of the jump measure $\mu^X$ of $X$ with respect to $\F_+^P$, suppose that $W^P$ is nonnegative. Then by the indistinguishability of $W$ and $W^P$ and \cite[Theorem~II.1.8, p.\,66]{JacodShiryaev.03},
  \begin{equation*}
  E^P[(W^P\ast \nu)_\infty]= E^P[(W\ast \nu)_\infty]=  E^P[(W\ast \mu^X)_\infty]= E^P[(W^P\ast \mu^X)_\infty].
  \end{equation*}
  Now the uniqueness of the $\F_+^P$-compensator as stated in the cited theorem shows that $\nu=\nu'$ $P$-a.s. This completes the proof that $(B,C,\nu)=(B',C',\nu')$ $P$-a.s.

  Again, the argument for $\F_+$ is contained in the above as a special case, and so the proof of Proposition~\ref{prop:SemFunab} is complete.
\end{proof}

To study the measurability of $\fP_{sem}$, we need to express the semimartingale property in a way which is more accessible than the mere existence of a semimartingale decomposition. To this end, it will be convenient to use some facts which were developed in~\cite{BeiglbockSchachermayerVeliyev.11} to give an alternative proof of the Bichteler--Dellacherie theorem.

We continue to consider a c\`adl\`ag, $\R^d$-valued, $\F$-adapted process $X$ on an arbitrary filtered space $(\Omega,\cF,\F)$, but fix a finite time horizon $T>0$. Let $(\widetilde{X}_t)_{t\in [0,T]}$ be the process defined by
\begin{equation*}
  \widetilde{X}_t:= X_t-X_0-\sum_{0 \leq s \leq t} \Delta X_s \, \1_{\{|\Delta X_s|> 1\}}
\end{equation*}
and consider the sequence of $\F$-stopping times
\begin{equation*}
  T_m:= \inf\big\{ t\geq 0 \, \big| \, |\widetilde{X}_t|\geq m \ \mbox{or} \ |\widetilde{X}_{t-}|\geq m \big\}.
\end{equation*}
Moreover, for any $m \in \N$, define the process $(\widetilde{X}^m_t)_{t\in [0,T]}$ by
\[
  \widetilde{X}^m_t:= (m+1)^{-1} \widetilde{X}_{T_m\wedge t}.
\]
Given $P \in \fP(\Omega)$, we can consider the Doob decomposition of $\widetilde{X}^m$ sampled on the $n$-th dyadic partition of $[0,T]$ under $P$ and $\F_{+}$; namely,  $A^{m,P,n}:=0$ and
\begin{align*}
  A^{m,P,n}_{kT/2^n}&:=\sum_{j=1}^k E^P\big[\widetilde{X}^m_{jT/2^n}-\widetilde{X}^m_{(j-1)T/2^n}\, \big| \, \cF_{(j-1)T/2^n +} \big], \ \ \ 1\leq k\leq 2^n,\\
  M^{m,P,n}_{kT/2^n}&:= \widetilde{X}^m_{kT/2^n}-A^{m,P,n}_{kT/2^n},  \ \ \ 0\leq k \leq 2^n.
\end{align*}
Furthermore, given $c>0$, we define the $\F_+$-stopping times
\begin{align*}
\sigma_{m,n}(c)&:= \inf\bigg\{\frac{kT}{2^n} \, \bigg | \, \sum_{j=1}^k \Big| \widetilde{X}^m_{\frac{jT}{2^n}}-\widetilde{X}^m_{\frac{(j-1)T}{2^n}}\Big|^2 \geq c-4 \bigg\}, \\
\tau_{m,P,n}(c)&:= \inf\bigg\{\frac{kT}{2^n} \, \bigg | \, \sum_{j=1}^k \Big| A^{m,P,n}_{\frac{jT}{2^n}}-A^{m,P,n}_{\frac{(j-1)T}{2^n}}\Big| \geq c-2 \bigg\}.
\end{align*}

\begin{proposition}\label{prop:B/S/Vgen}
  Let $P \in \fP(\Omega)$. The process $(X_t)_{t\in [0,T]}$ is a $P$-$\F$-semi\-martingale if and only if for all $m \in\N$ and $\varepsilon>0$ there exists a constant $c=c(m,\varepsilon)>0$ such that
  \begin{equation}\label{eq:BSVcondition}
    P\big[\sigma_{m,n}(c)<\infty\big]< \frac{\varepsilon}{2} \quad\mbox{and} \quad P\big[\tau_{m,P,n}(c)<\infty\big]< \frac{\varepsilon}{2}\quad \mbox{for all }n\geq1.
  \end{equation}
\end{proposition}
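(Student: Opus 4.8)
The plan is to read condition~\eqref{eq:BSVcondition} as a uniform-in-$n$ boundedness-in-probability statement and then run the Bichteler--Dellacherie argument of \cite{BeiglbockSchachermayerVeliyev.11} in its two directions. First I would record the reductions. Since $\widetilde X$ has jumps bounded by $1$, the stopped and rescaled process $\widetilde X^m$ is bounded by $1$ with jumps bounded by $1$, and $T_m\to\infty$ $P$-a.s.\ (so $T_m>T$ for $m$ large) because $\widetilde X$ is c\`adl\`ag, hence locally bounded. The big jumps removed in passing from $X$ to $\widetilde X$ are finitely many on $[0,T]$ and form a finite-variation process, so $X$ is a $P$-$\F$-semimartingale iff $\widetilde X$ is, and by localization along $(T_m)$ this holds iff each $\widetilde X^m$ is. Finally, because the running sums defining $\sigma_{m,n}(c)$ and $\tau_{m,P,n}(c)$ increase, the events unwind to $\{\sigma_{m,n}(c)<\infty\}=\{\sum_{j=1}^{2^n}|\Delta_j\widetilde X^m|^2\ge c-4\}$ and $\{\tau_{m,P,n}(c)<\infty\}=\{\sum_{j=1}^{2^n}|\Delta_j A^{m,P,n}|\ge c-2\}$, so~\eqref{eq:BSVcondition} says exactly that the discrete quadratic variations of $\widetilde X^m$ and the total variations of the discrete drifts $A^{m,P,n}$ stay bounded in probability, uniformly in $n$. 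Throughout I would work in $\F_+$, the filtration appearing in the Doob decomposition, and transfer the conclusion back to $\F$ via Proposition~\ref{prop:SemFunab}.

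For the forward implication, assume $X$, hence each bounded $\widetilde X^m$, is a semimartingale. The approximate quadratic variations $\sum_j|\Delta_j\widetilde X^m|^2$ converge in probability to $[\widetilde X^m]_T$, so they are bounded in probability and the $\sigma$-bound follows for $c$ large. Writing the canonical decomposition $\widetilde X^m=M+B$ with $B$ predictable of finite variation, the (localized) martingale increments have vanishing conditional expectation, so $\Delta_j A^{m,P,n}=E^P[\Delta_j B\mid\cF_{(j-1)T/2^n+}]$ and hence $\sum_j|\Delta_j A^{m,P,n}|\le\sum_j E^P[\mathrm{Var}(B)_{jT/2^n}-\mathrm{Var}(B)_{(j-1)T/2^n}\mid\cF_{(j-1)T/2^n+}]$. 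After stopping so that $\mathrm{Var}(B)_T$ is bounded (possible since $\mathrm{Var}(B)_T<\infty$ $P$-a.s.), the expectation of the right-hand side is bounded by $E^P[\mathrm{Var}(B)_T]$ by the tower property, and Markov's inequality then yields the $\tau$-bound uniformly in $n$.

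For the reverse implication---the substantive direction---I would fix $m$ and first upgrade the probability bounds to $L^2$-bounds on the discrete martingales. By orthogonality of martingale increments, $\|M^{m,P,n}_T\|_{L^2(P)}^2=E^P[\sum_j|\Delta_j M^{m,P,n}|^2]\le 2E^P[\sum_j|\Delta_j\widetilde X^m|^2]+2E^P[\sum_j|\Delta_j A^{m,P,n}|^2]$, and since $|\Delta_j A^{m,P,n}|\le 2$ one has $\sum_j|\Delta_j A^{m,P,n}|^2\le 2\,\mathrm{Var}(A^{m,P,n})$. Stopping at $\sigma_{m,n}(c)\wedge\tau_{m,P,n}(c)$ caps both sums by a constant multiple of $c$, giving $L^2$-bounds on the stopped martingales uniform in $n$; the thresholds $c-4$ and $c-2$ are precisely what make these stopped sums controllable (a single squared increment is $\le 4$, a single drift increment $\le 2$). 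I would then apply the single-measure version of the $L^2$-compactness result (Proposition~\ref{pr:mblKomlos}) to pass to convex combinations of $M^{m,P,n}_T$ converging in $L^2(P)$, close the limit up into a right-continuous martingale via the modification of Lemma~\ref{le:cadlagversionm}, and note that the matching convex combinations of $A^{m,P,n}$ converge to a predictable process of finite variation. Together they furnish a decomposition $\widetilde X^m=M^{m,P}+A^{m,P}$ valid up to an event of probability $\varepsilon$; letting $\varepsilon\downarrow0$ and $m\to\infty$ and patching along $(T_m)$ exhibits $\widetilde X$, hence $X$, as a $P$-$\F_+$-semimartingale, and Proposition~\ref{prop:SemFunab} delivers the $\F$-statement.

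The main obstacle is this last passage to the limit: securing the uniform $L^2$-control on the stopped discrete martingales and, above all, verifying that the convex-combination limit is a genuine right-continuous (local) martingale whose complement is predictable of finite variation. Turning weak $L^2$-compactness at the terminal time into a bona fide continuous-time semimartingale decomposition is exactly the compactness core of the Bichteler--Dellacherie theorem as formulated in \cite{BeiglbockSchachermayerVeliyev.11}, and it is where the careful choice of the stopping times $\sigma_{m,n}$ and $\tau_{m,P,n}$ together with the localization along $(T_m)$ do the real work.
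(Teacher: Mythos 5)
Your reductions and the unwinding of the events \eqref{eq:BSVcondition} are correct, and on the substantive (reverse) direction you are doing exactly what the paper does: the paper simply invokes ``the proof of \cite[Theorem~1.6]{BeiglbockSchachermayerVeliyev.11}'', which is precisely the chain you sketch --- uniform $L^2$-bounds on the stopped discrete martingales coming from the thresholds $c-4$ and $c-2$, convex-combination compactness, closing the limit into a right-continuous martingale, and patching over events of probability $\eps$ (this is also what the paper's Lemma~\ref{le:thm1.6B/S/V} and Proposition~\ref{prop:predFVbdd} redo later with measurability in $P$). Where you genuinely diverge is the \emph{forward} direction: the paper routes it through no-arbitrage, citing \cite[Theorem~1.6]{BeiglbockSchachermayerVeliyev.11} (bounded semimartingale $\Rightarrow$ NFLVR with little investment) followed by \cite[Proposition~3.1]{BeiglbockSchachermayerVeliyev.11} (NFLVR-LI $\Rightarrow$ the discrete bounds), whereas you argue directly from the canonical decomposition $\widetilde{X}^m=M+B$: convergence of approximate quadratic variations for the $\sigma$-bound, and a conditional-variation estimate plus Markov for the $\tau$-bound. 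Your route is more self-contained (no arbitrage detour), at the cost of having to do the estimates by hand; the paper's route is shorter given that BSV is already being used as a black box for the converse.

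However, your forward argument for the $\tau$-bound has a concrete gap at the step ``after stopping so that $\Var(B)_T$ is bounded \dots Markov's inequality then yields the $\tau$-bound.'' The quantity in $\tau_{m,P,n}(c)$ is the discrete Doob drift of the \emph{unstopped} process $\widetilde{X}^m$, and stopping does not commute with its formation: if $\rho$ is a stopping time with $\Var(B)_{T\wedge\rho}\leq K$, the identity $\Delta_j A^{m,P,n}=E^P[\Delta_j B\mid\cF_{(j-1)T/2^n+}]$ that you use is valid for the discrete drift of the stopped process $(\widetilde{X}^m)^\rho$, not for $A^{m,P,n}$ itself, and the two need \emph{not} agree on $\{\rho>T\}$, because conditional expectations are not local --- they see the paths on $\{\rho\leq T\}$ as well. (A second, smaller omission: $M$ is only a local martingale, so ``martingale increments have vanishing conditional expectation'' requires intersecting $\rho$ with a localizing sequence for $M$.) The gap is reparable: writing $\Delta_j A^{m,P,n}=\1_{\{\rho\leq (j-1)T/2^n\}}E^P[\Delta_j\widetilde{X}^m\mid\cF_{(j-1)T/2^n+}]+E^P\big[\Delta_j(\widetilde{X}^m)^\rho\,\1_{\{\rho>(j-1)T/2^n\}}\mid\cF_{(j-1)T/2^n+}\big]+E^P[R_j\mid\cF_{(j-1)T/2^n+}]$, the first term vanishes identically on $\{\rho>T\}$, the second term equals $\1_{\{\rho>(j-1)T/2^n\}}\,\Delta_j\tilde{A}^n$ for the stopped process (the indicator is $\cF_{(j-1)T/2^n}$-measurable and can be pulled inside), so your Markov estimate with bound $E^P[\Var(B)_{T\wedge\rho}]$ applies to it, and the remainder $R_j$ is bounded by $2$ and supported on $\{(j-1)T/2^n<\rho<jT/2^n\}$, so $\sum_j|E^P[R_j\mid\cF_{(j-1)T/2^n+}]|$ has expectation at most $2P[\rho\leq T]$ and is handled by choosing $P[\rho\leq T]$ small. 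With this repair your forward direction is a valid alternative to the paper's arbitrage-based one.
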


\begin{proof}
  Clearly $X$ is a $P$-$\mathbb{F}$-semimartingale if and only if $\widetilde{X}^m$ has this property for all $m$. Moreover, by Proposition~\ref{prop:SemFunab}, this is equivalent to $\widetilde{X}^m$ being a $P$-$\F^P_+$-semimartingale.

  If $\widetilde{X}^m$ is a $P$-$\F_{+}^P$-semimartingale, \cite[Theorem~1.6]{BeiglbockSchachermayerVeliyev.11} implies that it satisfies the property ``no free lunch with vanishing risk and little investment'' introduced in~\cite[Definition~1.5]{BeiglbockSchachermayerVeliyev.11}. As $\sup_{t \in [0,T]} |\widetilde{X}^m_t|\leq 1$,  we deduce from \cite[Proposition~3.1]{BeiglbockSchachermayerVeliyev.11} that for any $\varepsilon>0$ there exists a constant $c=c(m,\varepsilon)>0$ such that~\eqref{eq:BSVcondition} holds.
  Conversely, suppose that there exist such constants; then, as $\sup_{t \in [0,T]} |\widetilde{X}^m_t|\leq 1$, the proof of~\cite[Theorem~1.6]{BeiglbockSchachermayerVeliyev.11} shows that $\widetilde{X}^m$ is a  $P$-$\F_{+}^P$-semimartingale.
\end{proof}

\begin{corollary}\label{co:P_sem}
  Under the conditions of Theorem~\ref{th:charactMbl}, the set
  \[
    \fP_{sem,T}=\{P\in \fP(\Omega)\,|\, \mbox{$(X_t)_{0\leq t\leq T}$ is a semimartingale on }(\Omega,\cF,\F,P)\}
  \]
  is Borel-measurable for every $T>0$, and so is $\fP_{sem}=\cap_{T\in\N} \fP_{sem,T}$.
\end{corollary}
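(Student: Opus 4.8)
The plan is to read the measurability of $\fP_{sem,T}$ directly off the characterization in Proposition~\ref{prop:B/S/Vgen}. That proposition expresses membership in $\fP_{sem,T}$ through the condition~\eqref{eq:BSVcondition}, which a priori quantifies over the uncountable parameters $\eps>0$ and $c>0$. First I would reduce these to countable quantifiers using monotonicity. The event $\{\sigma_{m,n}(c)<\infty\}$ is non-increasing in $c$ (raising $c$ raises the threshold $c-4$), and likewise $\{\tau_{m,P,n}(c)<\infty\}$ is non-increasing in $c$ via the threshold $c-2$; hence if~\eqref{eq:BSVcondition} holds for some real $c>0$ it holds for the integer $\lceil c\rceil$. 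Similarly, ``for all $\eps>0$'' may be replaced by ``for all $\eps=1/j$, $j\in\N$''. This yields
\[
  \fP_{sem,T}=\bigcap_{m\in\N}\bigcap_{j\in\N}\bigcup_{c\in\N}\bigcap_{n\in\N}\Big\{P\in\fP(\Omega): P\big[\sigma_{m,n}(c)<\infty\big]<\tfrac{1}{2j}\ \text{and}\ P\big[\tau_{m,P,n}(c)<\infty\big]<\tfrac{1}{2j}\Big\}.
\]
It then suffices to prove that for each fixed $m,n,c$ the two maps $P\mapsto P[\sigma_{m,n}(c)<\infty]$ and $P\mapsto P[\tau_{m,P,n}(c)<\infty]$ are Borel, since $\fP_{sem,T}$ is built from them by countably many set operations preserving measurability.

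The first map is immediate. The stopping time $\sigma_{m,n}(c)$ does not depend on $P$, and $\{\sigma_{m,n}(c)<\infty\}$ is the $\cF$-measurable event that the finite dyadic sum of squared increments of $\widetilde{X}^m$ reaches $c-4$ at some grid point of $[0,T]$. Hence $P\mapsto P[\sigma_{m,n}(c)<\infty]$ is Borel by the very definition of the Borel structure on $\fP(\Omega)$, which is generated by the evaluation maps $P\mapsto P(A)$, $A\in\cF$.

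The second map is the crux and is where I expect the real work to lie, because $\tau_{m,P,n}(c)$ depends on $P$ through the discrete drift $A^{m,P,n}$, which is a sum of conditional expectations. Here I would invoke Lemma~\ref{le:BorelcondexpPP}: each increment $\widetilde{X}^m_{jT/2^n}-\widetilde{X}^m_{(j-1)T/2^n}$ is a fixed $\cF$-measurable function, so its conditional expectation given $\cF_{(j-1)T/2^n+}$ admits a version that is jointly measurable in $(P,\omega)$. Summing finitely many such versions, $(P,\omega)\mapsto A^{m,P,n}_{kT/2^n}(\omega)$ is $\widehat{\cF}$-measurable, and therefore so is the cumulative total-variation sum. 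Consequently the set
\[
  D_{m,n,c}:=\Big\{(P,\omega)\in\widehat{\Omega}: \textstyle\sum_{j=1}^k\big|A^{m,P,n}_{jT/2^n}(\omega)-A^{m,P,n}_{(j-1)T/2^n}(\omega)\big|\ge c-2\ \text{for some } 1\le k\le 2^n\Big\}
\]
belongs to $\widehat{\cF}$. Since passing to a different version changes $A^{m,P,n}$ only on a $P$-null set for each fixed $P$, the probability $P[\tau_{m,P,n}(c)<\infty]=E^P[\1_{D_{m,n,c}}(P,\cdot)]$ is unaffected, and the first assertion of Lemma~\ref{le:BorelcondexpPP} (applied to $f=\1_{D_{m,n,c}}$) shows that $P\mapsto E^P[\1_{D_{m,n,c}}(P,\cdot)]$ is Borel.

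With both maps shown to be Borel, the displayed formula exhibits $\fP_{sem,T}$ as a Borel set. Finally, $X$ is a semimartingale on $\R_+$ if and only if it is one on $[0,T]$ for every $T$, and by the monotonicity of this property in $T$ it suffices to let $T$ range over $\N$; thus $\fP_{sem}=\bigcap_{T\in\N}\fP_{sem,T}$ is a countable intersection of Borel sets and hence Borel. The only genuinely delicate point is the $P$-dependence of $A^{m,P,n}$, which is precisely what the measurable conditional expectations of Lemma~\ref{le:BorelcondexpPP} are designed to handle.
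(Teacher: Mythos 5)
Your proof is correct and follows essentially the same route as the paper: both express $\fP_{sem,T}$ via the characterization of Proposition~\ref{prop:B/S/Vgen} as a countable intersection/union of sets cut out by the maps $P\mapsto P[\sigma_{m,n}(c)<\infty]$ and $P\mapsto P[\tau_{m,P,n}(c)<\infty]$, and both obtain the Borel measurability of these maps from the jointly measurable conditional expectations of Lemma~\ref{le:BorelcondexpPP}. Your write-up is merely more explicit than the paper's on two points it leaves implicit---the reduction of the quantifiers over $\eps$ and $c$ to countable ones, and the observation that changing the version of $A^{m,P,n}$ on a $P$-null set does not affect $P[\tau_{m,P,n}(c)<\infty]$.
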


\begin{proof}
  Let $T>0$; then Proposition~\ref{prop:B/S/Vgen} allows us to write $\fP_{sem,T}$ as
  \begin{align*}
  \bigcap_{m,k \in \N} \bigcup_{c \in \N} \bigcap_{n\in \N} \Big\{ P \in \fP(\Omega) \, \Big| \, P\big[\sigma_{m,n}(c)<\infty\big]
     +P\big[\tau_{m,P,n}(c)<\infty\big]< 1/k \Big\};
  \end{align*}
  hence, it suffices to argue that the right-hand side is measurable. Indeed, $\omega\mapsto \sigma_{m,n}(c)(\omega)$ and $(P,\omega) \mapsto \tau_{m,P,n}(c)(\omega)$ are measurable by Lemma~\ref{le:BorelcondexpPP}, so the required measurability follows by another application of the same lemma.
\end{proof}

\section{Measurable Doob-Meyer and Canonical Decompositions}\label{se:DoobMeyer}

In this section, we first obtain a version of the Doob--Meyer decomposition which is measurable with respect to the probability measure $P$. Then, we apply this result to construct the canonical decomposition of a bounded semimartingale with the same measurability; together with a localization argument, this will provide the first semimartingale characteristic $B^P$ in the subsequent section.
The conditions of Theorem~\ref{th:charactMbl} are in force; moreover, we fix a measurable set $\fP\subseteq \fP(\Omega)$. As the results of this section can be applied componentwise, we consider scalar processes without compromising the generality.

There are various proofs of the Doob--Meyer theorem, all based on compactness arguments, which use a passage to the limit from the elementary Doob decomposition in discrete time. The latter is measurable with respect to $P$ by Lemma~\ref{le:BorelcondexpPP}. Thus, the main issue is to go through a compactness argument while retaining measurability. Our Proposition~\ref{pr:mblKomlos} is tailored to that purpose, and it combines naturally with the proof of the Doob--Meyer decomposition given in~\cite{BeiglbockSchachermayerVeliyev.12}.

\begin{proposition}[Doob--Meyer]\label{prop:Doobmeyer}
  Let $(P,\omega,t)\mapsto S^P_t(\omega)$ be a measurable function such that for all $P\in\fP$, $S^P$ is a right-continuous, $\F_+$-adapted $P$-$\F^P_+$-submartingale of class D. There exists a measurable function $(P,\omega,t)\mapsto A^P_t(\omega)$ such that for all $P\in\fP$,
  \[
    S^P-S^P_0-A^P\quad \mbox{is a $P$-$\F^P_+$-martingale},
  \]
   $A^P$ is right-continuous, $\F_{+}$-adapted, $\F^P_{+}$-predictable and $P$-a.s.\ increasing.
\end{proposition}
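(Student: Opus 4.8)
The plan is to run the compactness proof of the Doob--Meyer theorem from \cite{BeiglbockSchachermayerVeliyev.12} while keeping every step jointly measurable in $P$, replacing each classical limiting argument by the corresponding tool from Section~\ref{se:auxResults}. It suffices to construct $A^P$ on a fixed finite horizon $[0,T]$ with $T\in\N$: by uniqueness of the classical Doob--Meyer decomposition the resulting processes are consistent up to $P$-indistinguishability, so setting $A^P_t:=A^{(\lceil t\rceil),P}_t$, where $A^{(N),P}$ denotes the process built on $[0,N]$, yields a jointly measurable process that for each $P$ is $P$-indistinguishable from the horizon-$N$ construction on $[0,N]$ for every $N$. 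Fixing $T$ and letting $D$ be the dyadic rationals in $[0,T]$, I would first record the discrete compensators: for each $n$, let $A^{P,n}$ be the elementary Doob compensator of $S^P$ on the $n$-th dyadic grid, extended to a right-continuous step process. By Lemma~\ref{le:BorelcondexpPP} the map $(P,\omega,t)\mapsto A^{P,n}_t$ is measurable, and each $A^{P,n}$ is increasing with $A^{P,n}_0=0$ and $M^{P,n}:=S^P-S^P_0-A^{P,n}$ a martingale (with respect to $\cF_{\cdot+}$) on the grid. The class D hypothesis is precisely what guarantees, for each $P$, that the terminal family $\{A^{P,n}_T\}_{n}$ is uniformly integrable in $L^1(P)$; since the $A^{P,n}$ are increasing, $\{A^{P,n}_q\}_n$ is then uniformly integrable for every $q\in D$ as well.

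The heart of the argument is the measurable passage to the limit. I would apply Proposition~\ref{pr:mblKomlos} --- in the uniformly integrable form of part~(iii), combined with the diagonalization of part~(ii) over the countable index set $D$ (applied to the truncated sequences $A^{P,n}_q\1_{\{A^{P,n}_q\le l\}}$) --- to obtain a single measurable choice of convex weights $P\mapsto(\lambda^{P,n}_i)$, independent of $q$, such that the convex combinations $B^{P,n}_q:=\sum_i\lambda^{P,n}_iA^{P,i}_q$ converge in $L^1(P)$ for every $q\in D$ and every $P\in\fP$. Lemma~\ref{le:L1messbar} then provides a jointly measurable limit $(P,\omega)\mapsto\bar A^P_q(\omega)$ for each $q\in D$; being an $L^1(P)$-limit of increasing convex combinations, it is $P$-a.s.\ nondecreasing in $q$, with $\bar A^P_0=0$. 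The matching combinations $\sum_i\lambda^{P,n}_iM^{P,i}$ are $P$-martingales between any two points of $D$ once $n$ is large, so the limit $\bar M^P:=S^P-S^P_0-\bar A^P$ is a $P$-martingale indexed by $D$.

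It remains to regularize and to identify the limit. On $[0,T]$ I would set $m^P(P,\omega,t):=E^P[\bar M^P_T\,|\,\cF_{t+}](\omega)$, which by Lemma~\ref{le:BorelcondexpPP} is jointly measurable and $\widehat{\cF}_{t+}$-measurable in $(P,\omega)$ for each $t$, is for each $P$ a $P$-$\F_+$-martingale agreeing with $\bar M^P$ on $D$, and has constant, hence right-continuous, expectation. Lemma~\ref{le:cadlagversionm} then yields a jointly measurable, right-continuous, $\F_+$-adapted $P$-modification $M^P$, and I define $A^P:=S^P-S^P_0-M^P$. By construction $A^P$ is jointly measurable, right-continuous, $\F_+$-adapted, $P$-a.s.\ coincides with $\bar A^P$ on $D$ and is therefore $P$-a.s.\ increasing, and $S^P-S^P_0-A^P=M^P$ is a $P$-$\F^P_+$-martingale.

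The delicate final point is $\F^P_+$-predictability, which the martingale construction does not furnish directly. Here I would invoke uniqueness: the classical Doob--Meyer theorem provides an $\F^P_+$-predictable increasing compensator of $S^P$, which by uniqueness is $P$-indistinguishable from our $A^P$; since $\F^P_+$ is the usual augmentation, the evanescent set on which the two differ lies in $\cP^P$, so $A^P$ is itself $\F^P_+$-predictable. I expect the measurable compactness step --- extracting convex combinations that converge in $L^1(P)$ simultaneously at all dyadic times with a common, $P$-measurable choice of weights, via Proposition~\ref{pr:mblKomlos}(ii)--(iii) --- to be the main obstacle; the regularization through Lemma~\ref{le:cadlagversionm} and the consistency over horizons are comparatively routine.
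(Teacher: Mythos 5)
Your construction up to the definition of $A^P$ is essentially the paper's own proof: discrete Doob decompositions made measurable via Lemma~\ref{le:BorelcondexpPP}, a measurable Komlos-type selection via Proposition~\ref{pr:mblKomlos}, a jointly measurable limit via Lemma~\ref{le:L1messbar}, and regularization via Lemma~\ref{le:cadlagversionm}. (The paper is slightly more economical: it applies Proposition~\ref{pr:mblKomlos}(iii) only to the terminal values $M^{P,n}_T$, whose uniform integrability is \cite[Lemma~2.2]{BeiglbockSchachermayerVeliyev.12}, and then gets convergence at all dyadic times for free, since $M^{P,n}_q=E^P[M^{P,n}_T\,|\,\cF_{q+}]$ at grid points and conditional expectation is an $L^1$-contraction; your diagonal Komlos argument simultaneously over all dyadic times is more laborious but not wrong.)

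The genuine gap is the final predictability step, which as written is circular. Uniqueness in the Doob--Meyer theorem holds \emph{within the class of predictable (equivalently, natural) compensators}: if $S=M_1+A_1=M_2+A_2$ with both $A_i$ predictable, increasing and null at $0$, then $A_1$ and $A_2$ are indistinguishable. It does \emph{not} say that any right-continuous increasing $A$ with $A_0=0$ and $S-S_0-A$ a martingale equals the predictable compensator; that statement is false. For instance, for a Poisson process $N$ on $[0,T]$ (a class~D submartingale), both $N_t=(N_t-t)+t$ and $N_t=0+N_t$ are decompositions into a martingale plus an increasing process vanishing at $0$, and only the first has a predictable finite-variation part. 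Since your $A^P$ is not yet known to be predictable, you cannot invoke uniqueness to identify it with the classical compensator --- that identification is exactly what has to be proved. The paper closes this point by following \cite[Section~2.3]{BeiglbockSchachermayerVeliyev.12}: exploiting that $A^P$ arises as a limit of the discrete compensators $A^{P,n}$, whose value at a grid point $jT/2^n$ is $\cF_{(j-1)T/2^n+}$-measurable, one shows directly that $A^P$ is $P$-indistinguishable from a $\cP$-measurable process (alternatively, one verifies that the limit is natural and applies Dol\'eans' theorem); this is where the specific structure of $A^P$ as a limit of discrete compensators enters, and it cannot be replaced by an abstract uniqueness argument. A further, minor, remark: your gluing over horizons, $A^P_t:=A^{(\lceil t\rceil),P}_t$, is right-continuous at integer times only $P$-a.s.\ (the paper's convention requires all paths right-continuous), and its consistency also presupposes the predictability just discussed; this is easily repaired once predictability is in hand.
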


\begin{proof}
  It suffices to consider a finite time horizon $T>0$; moreover, we may assume that $S_0^P=0$.
  For each $P \in \fP$ and $n \in \N$, consider the Doob decomposition of the process $(S^P_{jT/2^n})_{j=0,...,2^n}$, defined by  $A^{P,n}_0=0$ and
  \begin{align*}
    A^{P,n}_{kT/2^n}&= \sum_{j=1}^k E^P\Big[S^P_{jT/2^n}-S^P_{(j-1)T/2^n}\, \Big| \, \mathcal{F}_{(j-1)T/2^n +} \Big], \ \ \ \ 1\leq k\leq 2^n,\\
    M^{P,n}_{kT/2^n}&= S^P_{kT/2^n}- A^{P,n}_{kT/2^n}, \ \ \ \ 0\leq k\leq 2^n.
  \end{align*}
  Note that $(A^{P,n}_{jT/2^n})_{j=0,...,2^n}$ has $P$-a.s.\ increasing paths and that $(P,\omega)\mapsto A^{P,n}_{kT/2^n}(\omega)$ is measurable by Lemma \ref{le:BorelcondexpPP}. As a consequence, $(P,\omega)\mapsto M^{P,n}_T(\omega)$ is measurable as well.
  We deduce from \cite[Lemma~2.2]{BeiglbockSchachermayerVeliyev.12} that for each $P \in \fP$ the sequence $(M^{P,n}_T)_{n \in \N}\subseteq L^1(P)$ is uniformly integrable. Therefore, we can apply Proposition~\ref{pr:mblKomlos} to obtain an $L^1(P)$-convergent sequence of convex combinations
  \begin{equation*}
  \mathcal{M}^{P,n}_T:= \sum_{i=n}^{N^P_n} \lambda^{P,n}_i\,M^{P,i}_T
  \end{equation*}
  which are measurable in $(P,\omega)$. By Lemma \ref{le:L1messbar}, we can find a version $\mathcal{M}^P_T$ of the limit which is again jointly measurable in $(P,\omega)$.

  On the strength of Lemma~\ref{le:BorelcondexpPP} and Lemma~\ref{le:cadlagversionm}, we can find a measurable function $(P,\omega,t)\mapsto M^P_t(\omega)$ such that for each $P \in \fP$,  $(M^P_t)_{t \in [0,T]}$ is a right-continuous $P$-$\F_{+}$-martingale and a $P$-modification of $(E^P[\mathcal{M}^{P}_T \,| \, \mathcal{F}_{t+}])_{0 \leq t \leq T}$.
  We define $A^P$ by
  \begin{equation*}
  A^P_t:= S^P_t -M^P_t;
  \end{equation*}
  then $A^P$ is right-continuous and $\F_{+}$-adapted and $(P,\omega,t)\mapsto A^P_t(\omega)$ is measurable. Following the arguments in~\cite[Section~2.3]{BeiglbockSchachermayerVeliyev.12}, we see that $A^P$ is $P$-a.s.\ increasing and $P$-indistinguishable from a $\P$-measurable process, hence predictable with respect to $\F^P_{+}$.
\end{proof}

We can now construct the compensator of a process with integrable variation. We recall the filtration $\widehat{\F}$ on $\fP\times \Omega$ introduced in~\eqref{eq:defHatOmega}.

\begin{corollary}[Compensator]\label{co:Doobmeyer}
  Let $(P,\omega,t)\mapsto S^P_t(\omega)$ be a right-continuous $\widehat{\F}_+$-adapted process such that for all $P\in\fP$, $S^P$ is an $\F_+$-adapted process of $P$-integrable variation. There exists a measurable function $(P,\omega,t)\mapsto A^P_t(\omega)$ such that for all $P\in\fP$,
  \[
    S^P-S^P_0-A^P\quad \mbox{is a $P$-$\F^P_+$-martingale},
  \]
   $A^P$ is right-continuous, $\F_{+}$-adapted, $\F^P_{+}$-predictable and $P$-a.s.\ of finite variation.
\end{corollary}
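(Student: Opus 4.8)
The plan is to reduce the statement to the submartingale case already settled in Proposition~\ref{prop:Doobmeyer}, by writing $S^P$ as a difference of two increasing processes and compensating each of them separately. The key point that makes this a genuine contribution rather than a textbook reduction is that the Jordan-type decomposition must be carried out measurably in $P$; this is achieved through the variation process.

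First I would construct the total variation process of $S^P$ jointly measurably. Since the total variation of a right-continuous function of finite variation is the limit of the variations along refining dyadic partitions, I set
\[
  \Var(S^P)_t(\omega):= \lim_{n\to\infty} \sum_{j\geq1} \bigl| S^P_{t\wedge j2^{-n}}(\omega) - S^P_{t\wedge (j-1)2^{-n}}(\omega)\bigr|.
\]
For each fixed $t$ this is a countable combination of $\widehat{\cF}_{t+}$-measurable maps (using that $S^P$ is $\widehat{\F}_+$-adapted), hence $\widehat{\cF}_{t+}$-measurable, while $t\mapsto \Var(S^P)_t(\omega)$ is right-continuous and increasing. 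By the $\widehat{\cF}_{t+}$-version of Lemma~\ref{le:productm}, the map $(P,\omega,t)\mapsto \Var(S^P)_t(\omega)$ is therefore jointly measurable and $\widehat{\F}_+$-adapted. Because $S^P$ has $P$-integrable variation, $\Var(S^P)_\infty\in L^1(P)$ for every $P\in\fP$.

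Next I would set
\[
  U^P:= \tfrac12\bigl(\Var(S^P) + (S^P - S^P_0)\bigr),\qquad V^P:= \tfrac12\bigl(\Var(S^P) - (S^P - S^P_0)\bigr),
\]
so that $S^P - S^P_0 = U^P - V^P$. Both $U^P$ and $V^P$ are jointly measurable, right-continuous, $\F_+$-adapted and, by the Jordan decomposition of a finite-variation path, $P$-a.s.\ increasing with $U^P_0=V^P_0=0$; since $|S^P - S^P_0|\leq \Var(S^P)$ and $\Var(S^P)_\infty\in L^1(P)$, they are $P$-integrable. An increasing, integrable, right-continuous, $\F_+$-adapted process is a $P$-$\F^P_+$-submartingale, and the domination $U^P_\tau\leq U^P_\infty\in L^1(P)$ over stopping times $\tau$ shows it is of class D. Thus $U^P$ and $V^P$ meet the hypotheses of Proposition~\ref{prop:Doobmeyer}.

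Finally I would apply Proposition~\ref{prop:Doobmeyer} separately to $U^P$ and to $V^P$, obtaining measurable functions $A^{U,P}$ and $A^{V,P}$ that are right-continuous, $\F_+$-adapted, $\F^P_+$-predictable, $P$-a.s.\ increasing, and such that $U^P - A^{U,P}$ and $V^P - A^{V,P}$ are $P$-$\F^P_+$-martingales. Setting $A^P:= A^{U,P} - A^{V,P}$ yields a jointly measurable process with $S^P - S^P_0 - A^P = (U^P - A^{U,P}) - (V^P - A^{V,P})$ a difference of $P$-$\F^P_+$-martingales, hence itself a $P$-$\F^P_+$-martingale; moreover $A^P$ is right-continuous, $\F_+$-adapted, $\F^P_+$-predictable and $P$-a.s.\ of finite variation as a difference of two increasing processes. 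I do not expect a serious obstacle here, since the genuinely hard analytic content—the measurable passage to the continuous-time limit—has already been absorbed into Proposition~\ref{prop:Doobmeyer}; the only real work is establishing the joint measurability of $\Var(S^P)$, and the remainder is the routine bookkeeping of the Jordan decomposition.
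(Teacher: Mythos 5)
Your overall strategy coincides with the paper's own proof: build $\Var(S^P)$ by dyadic approximation, form the Jordan-type decomposition $\frac{1}{2}(\Var(S^P)\pm(S^P-S^P_0))$, and apply Proposition~\ref{prop:Doobmeyer} to each increasing part. However, there is a genuine gap at precisely the step you dismiss as routine bookkeeping. The hypothesis only gives \emph{$P$-integrable} variation, which by the paper's conventions means the variation is finite $P$-a.s.; the exceptional set depends on $P$ and, since the filtration is not augmented, it cannot be discarded. Hence there are in general paths $(P,\omega)$ on which $\Var(S^P)_\cdot(\omega)$ reaches $+\infty$ in finite time. On such paths your $U^P$ and $V^P$ take the value $+\infty$, so they are not real-valued right-continuous processes; indeed even $t\mapsto \Var(S^P)_t(\omega)$ can fail to be right-continuous at the last instant where it is finite (consider a path that behaves like $(s-t_0)\sin(1/(s-t_0))$ after time $t_0$: the variation is $0$ at $t_0$ and $+\infty$ immediately afterwards). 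Consequently your appeal to Lemma~\ref{le:productm} is not justified, since that lemma requires right-continuity of \emph{all} paths, and---more seriously---$U^P$ and $V^P$ do not satisfy the hypotheses of Proposition~\ref{prop:Doobmeyer}, which (again by the paper's all-paths convention for ``right-continuous'') asks for a real-valued process every path of which is right-continuous. The distinction between ``all paths'' and ``$P$-almost all paths, with the null set depending on $P$'' is exactly the difficulty this paper is organized around, so it cannot be waved away.

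The repair is a stopping-time truncation, which is what the paper does. Set $\sigma^P:=\inf\{t\geq0 \,|\, \Var(S^P)_t=\infty\}$; the identity $\{\sigma^P<t\}=\bigcup_{q\in\Q,\,q<t}\{\Var(S^P)_q=\infty\}$ shows that $(P,\omega)\mapsto\sigma^P(\omega)$ is an $\widehat{\F}_+$-stopping time, and $\sigma^P=\infty$ $P$-a.s.\ for each $P\in\fP$ because the variation is $P$-integrable. Then (normalizing $S^P_0=0$) define
\[
  S^{1,P}:= \frac{\Var(S^P)+S^P}{2}\,\1_{[\![0,\sigma^P[\![}, \qquad
  S^{2,P}:= \frac{\Var(S^P)-S^P}{2}\,\1_{[\![0,\sigma^P[\![}.
\]
These processes are real-valued with \emph{all} paths right-continuous (after $\sigma^P$ they are constant equal to $0$), still $\F_+$-adapted, jointly measurable by Lemma~\ref{le:productm}, $P$-a.s.\ increasing and $P$-integrable, and $P$-indistinguishable from your $U^P$ and $V^P$. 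With this modification, Proposition~\ref{prop:Doobmeyer} applies to each of them and the remainder of your argument goes through unchanged.
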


\begin{proof}
  We may assume that $S^P_0=0$. By Lemma~\ref{le:productm}, $(P,\omega,t)\mapsto S^P_t(\omega)$ is measurable. Thus, if $S^P$ is $P$-a.s.\ increasing for all $P\in\fP$, Proposition~\ref{prop:Doobmeyer} immediately yields the result. Therefore, it suffices to show that there exists a decomposition
  \[
    S^P=S^{1,P}-S^{2,P}\quad P\as
  \]
  into $\F_{+}$-adapted, $P$-integrable processes having right-continuous and $P$-a.s.\ increasing paths such that $(P,\omega,t)\mapsto S^{i,P}_t(\omega)$ is measurable.
  Let $\Var(S^P)$ denote the total variation process of $S^P$. By the right-continuity of $S^P$, we have
  \begin{equation*}
  \Var(S^P)_t(\omega)=  \lim_{n \to \infty} \sum_{k=1}^{2^n}  \big|S^P_{kt/2^n}(\omega)-S^P_{(k-1)t/2^n}(\omega) \big|\quad \mbox{for all $(P,\omega,t)$.}
  \end{equation*}
   In particular, $\Var(S^P)$ is $\F_{+}$-adapted and $(P,\omega,t)\mapsto \Var(S^P)_t(\omega)$ is $\widehat{\F}_+$-adapted. For each $P \in \fP$, we define
  \begin{equation*}
    \sigma^P:= \inf \big\{t \geq 0 \, \big| \, \Var(S^P)_t= \infty \big\}.
  \end{equation*}
  The identity
  \[
    \{\sigma^P<t\}= \bigcup_{q\in\Q,\, q<t} \{\Var(S^P)_q = \infty\}
  \]
  shows that $(P,\omega)\mapsto \sigma^P(\omega)$ is an $\widehat{\F}_+$-stopping time and in particular measurable.
  As $S^P$ is of $P$-integrable variation, we have $\sigma^P=\infty$ $P$-a.s. Using Lemma~\ref{le:productm} and the fact that $\Var(S^P)\1_{[\![0,\sigma^P[\![}$ is right-continuous, it follows that the processes
   \begin{equation*}
   S^{1,P}:= \frac{\Var(S^P)+S^P}{2}\,\1_{[\![0,\sigma^P[\![}, \ \ \ \  S^{2,P}:= \frac{\Var(S^P)-S^P}{2}\,\1_{[\![0,\sigma^P[\![}
  \end{equation*}
  have the required properties.
\end{proof}

In the second part of this section, we construct the canonical decomposition of a bounded semimartingale. Ultimately, this decomposition can be obtained from the discrete Doob decomposition, a compactness argument and the existence of the compensator for bounded variation processes. Hence, we will combine Proposition~\ref{pr:mblKomlos} and the preceding Corollary~\ref{co:Doobmeyer}.
The following lemma is an adaptation of the method developed in~\cite{BeiglbockSchachermayerVeliyev.11} to our needs; it contains the mentioned compactness argument. We fix a finite time horizon $T>0$.

\begin{lemma}\label{le:thm1.6B/S/V}
Let $S=(S_t)_{t\in[0,T]}$ be a c\`adl\`ag, $\F_+$-adapted process with $S_0=0$ and $\sup_{t \in [0,T]} |S_t| \leq 1$ such that $S$ is a $P$-$\F^P_+$-semimartingale for all $P\in\fP$. For all $\varepsilon>0$ and $P \in \fP$ there exist
  \begin{enumerate}[topsep=3pt, partopsep=0pt, itemsep=2pt,parsep=2pt]
    \item a $[0,T]\cup\{\infty\}$-valued $\F_{+}$-stopping time $\alpha^P$ such that $(P,\omega)\mapsto \alpha^P(\omega)$ is an $\widehat{\F}_+$-stopping time and
    \[
      P[\alpha^P<\infty] \leq\varepsilon,
    \]
    \item a constant $c^P$ and right-continuous, $\F_{+}$-adapted processes $\mathcal{A}^{P}$, $\mathcal{M}^{P}$ with $\mathcal{A}^P_0=\mathcal{M}^P_0=0$ such that $(P,\omega,t)\mapsto (\mathcal{A}^{P}_t(\omega),\mathcal{M}^{P}_t(\omega))$ is $\widehat{\F}_+$-adapted,
        \[
          \mathcal{M}^{P}\;\;\mbox{is a $P$-$\F_{+}$-martingale}\quad\mbox{and}\quad\Var(\mathcal{A}^{P})\leq c^P\;\; P\as
        \]
  \end{enumerate}
  such that
  \[
    \mathcal{M}^{P}_t + \mathcal{A}^{P}_t =S_{{\alpha^P}\wedge t},\quad t\in [0,T].
  \]
\end{lemma}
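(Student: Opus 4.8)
The plan is to produce, for each fixed $\varepsilon>0$, a measurable-in-$P$ version of the decomposition that underlies the proof of the Bichteler--Dellacherie theorem in \cite{BeiglbockSchachermayerVeliyev.11}. The starting point is the discrete Doob decomposition of $S$ sampled on the $n$-th dyadic grid of $[0,T]$ under $\F_+$: set $A^{P,n}_0=0$ and
\[
  A^{P,n}_{kT/2^n}=\sum_{j=1}^k E^P\big[S_{jT/2^n}-S_{(j-1)T/2^n}\,\big|\,\cF_{(j-1)T/2^n+}\big],\quad M^{P,n}_{kT/2^n}=S_{kT/2^n}-A^{P,n}_{kT/2^n}.
\]
By Lemma~\ref{le:BorelcondexpPP} each $A^{P,n}_{kT/2^n}$ is jointly measurable in $(P,\omega)$, and hence so is $M^{P,n}_T$. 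Since $S$ is a $P$-$\F^P_+$-semimartingale for every $P$ and is uniformly bounded by $1$, the quantitative ``no free lunch with vanishing risk and little investment'' estimate \cite[Proposition~3.1]{BeiglbockSchachermayerVeliyev.11} applies: for the given $\varepsilon$ there is a constant $c=c(P,\varepsilon)$ controlling, uniformly in $n$, the probability that the running variation of the discrete drift $A^{P,n}$ exceeds $c$. The \emph{first} task is to choose this constant \emph{measurably} in $P$, e.g.\ by setting
\[
  c^P=\min\{c\in\N\,|\,P[\tau_{P,n}(c)<\infty]<\varepsilon\text{ for all }n\},
\]
where $\tau_{P,n}(c)$ is the analogue of the stopping time from Proposition~\ref{prop:B/S/Vgen} (the first dyadic time at which $\Var(A^{P,n})$ reaches $c$); measurability of $P\mapsto c^P$ follows as in the proof of Corollary~\ref{co:P_sem}, again using Lemma~\ref{le:BorelcondexpPP}.

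Next I would define the stopping. Let $\alpha^P$ be the first dyadic time (on a suitable grid, or the infimum over $n$) at which either the discrete quadratic variation of $S$ or the discrete variation of $A^{P,n}$ exceeds the level fixed by $c^P$; this is the stopping mechanism of \cite[proof of Theorem~1.6]{BeiglbockSchachermayerVeliyev.11}, and the measurable choice of $c^P$ guarantees both $P[\alpha^P<\infty]\le\varepsilon$ and that $(P,\omega)\mapsto\alpha^P(\omega)$ is an $\widehat{\F}_+$-stopping time (an identity of the form $\{\alpha^P<t\}=\bigcup_{q<t}\{\cdots\}$, as in Corollary~\ref{co:Doobmeyer}, gives the joint measurability). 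Having stopped at $\alpha^P$, the stopped drifts $A^{P,n}_{\cdot\wedge\alpha^P}$ have $L^1(P)$-bounded total variation uniformly in $n$, so the stopped martingale terminal values $(M^{P,n}_{\alpha^P\wedge T})_n$ are bounded in $L^1(P)$; the key compactness step of \cite{BeiglbockSchachermayerVeliyev.11} shows they are in fact uniformly integrable. Here I invoke the measurable Koml\'os lemma, Proposition~\ref{pr:mblKomlos}(iii): it produces jointly measurable convex combinations
\[
  \widetilde{M}^{P,n}_T=\sum_{i=n}^{N^P_n}\lambda^{P,n}_i\,M^{P,i}_{\alpha^P\wedge T}
\]
converging in $L^1(P)$ for every $P$, with measurable weights.

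From the $L^1(P)$-limit I obtain, exactly as in the proof of Proposition~\ref{prop:Doobmeyer}, a jointly measurable terminal random variable via Lemma~\ref{le:L1messbar}, and then a jointly measurable right-continuous $P$-$\F_+$-martingale $\mathcal{M}^P$ by taking measurable conditional-expectation versions (Lemma~\ref{le:BorelcondexpPP}) and applying the regularization Lemma~\ref{le:cadlagversionm}. Defining
\[
  \mathcal{A}^P_t:=S_{\alpha^P\wedge t}-\mathcal{M}^P_t
\]
gives a right-continuous, $\widehat{\F}_+$-adapted process with $\mathcal{A}^P_0=\mathcal{M}^P_0=0$ and $\mathcal{M}^P+\mathcal{A}^P=S_{\alpha^P\wedge\cdot}$ by construction. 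The bound $\Var(\mathcal{A}^P)\le c^P$ $P$-a.s.\ follows because the same convex combinations applied to the discrete drifts $A^{P,n}$ inherit the uniform variation bound $c^P$ (convexity of total variation) and converge to $\mathcal{A}^P$; one passes to the limit along an a.s.-convergent subsequence supplied by the second part of Lemma~\ref{le:L1messbar}. \textbf{The main obstacle} I anticipate is not any single estimate but the bookkeeping required to keep \emph{every} object jointly measurable while faithfully reproducing the compactness argument of \cite{BeiglbockSchachermayerVeliyev.11}: in particular, ensuring that the level $c^P$, the stopping time $\alpha^P$, and the Koml\'os weights are chosen by the \emph{same} measurable recipe so that the martingale and the bounded-variation parts glue together into the claimed identity with the stated adaptedness. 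The analytic heart--uniform integrability of the stopped discrete martingales--is borrowed wholesale from \cite{BeiglbockSchachermayerVeliyev.11}; my contribution is to route it through Proposition~\ref{pr:mblKomlos} and Lemmas~\ref{le:BorelcondexpPP}--\ref{le:cadlagversionm} without losing measurability at any stage.
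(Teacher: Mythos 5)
Your proposal is correct and follows essentially the same route as the paper's own (sketched) proof: both reproduce the compactness argument of \cite{BeiglbockSchachermayerVeliyev.11} (the measurable analogue of their Proposition~3.1, the stopping construction, and the passage to convex combinations of the discrete Doob decompositions), with joint measurability in $P$ supplied by Lemma~\ref{le:BorelcondexpPP}, Lemma~\ref{le:L1messbar}, Proposition~\ref{pr:mblKomlos} and Lemma~\ref{le:cadlagversionm}. The only cosmetic difference is that you route the compactness step through uniform integrability and Proposition~\ref{pr:mblKomlos}(iii), whereas the paper works with the $L^2(P)$-bounds on the stopped discrete martingales (as in \cite[Proposition~3.1]{BeiglbockSchachermayerVeliyev.11}) and the $L^2$ version of the measurable Koml\'os lemma; both are adequate.
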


\begin{proof}
  This lemma is basically a version of~\cite[Theorem~1.6]{BeiglbockSchachermayerVeliyev.11} with added measurability in $P$; we  only give a sketch of the proof. The first step is to obtain a version of~\cite[Proposition~3.1]{BeiglbockSchachermayerVeliyev.11}: For $P\in\fP$ and $n\in\N$, consider the Doob decomposition of the discrete-time process $(S_{jT/2^n})_{j=0,...,2^n}$ with respect to $P$ and $\F_+$, defined by  $A^{P,n}_0=0$ and
  \begin{align*}
   A^{P,n}_{kT/2^n}&:= \sum_{j=1}^k E^P\Big[S_{jT/2^n}-S_{(j-1)T/2^n}\, \Big| \, \mathcal{F}_{(j-1)T/2^n +} \Big], \ \ \ \ 1 \leq k \leq 2^n,\\
   M^{P,n}_{kT/2^n}&:= S_{kT/2^n}- A^{P,n}_{kT/2^n}, \ \ \ \ 0\leq k \leq 2^n.
  \end{align*}
  By adapting the proof of~\cite[Proposition~3.1]{BeiglbockSchachermayerVeliyev.11} and using Lemma~\ref{le:BorelcondexpPP}, one shows that for all $\eps>0$ and $P\in\fP$ there exist a constant $c^P \in \N$ and a sequence of $\big\{T/2^n,...,(2^n-1)T/2^n,T\big\}\cup \{\infty\}$-valued $\F_{+}$-stopping times $(\rho_{P,n})_{n \in \N}$ such that $(P,\omega)\mapsto \rho_{P,n}(\omega)$ is an $\widehat{\F}_+$-stopping time,
  \[
    P[\rho_{P,n}<\infty]<\varepsilon
  \]
  and
  \[
  \sum_{j=1}^{\frac{2^n}{T}(\rho_{P,n}\wedge T)} \big| A^{P,n}_{jT/2^n}-A^{P,n}_{(j-1)T/2^n} \big| \leq c^P, \quad %\label{eq:prop3.1B/S/V2}
  \big\Vert M^{P,n}_{T \wedge \rho_{P,n}} \big\Vert^2_{L^2(P)} \leq c^P %\label{eq:prop3.1B/S/V3}.
  \]
  for all $n\in\N$. The second step is to establish the following assertion: for all $\eps>0$ and $P\in\fP$ there exist a constant $c^P \in \N$, a $[0,T]\cup\{\infty\}$-valued $\F_{+}$-stopping time $\alpha^P$ such that $(P,\omega)\mapsto \alpha^P(\omega)$ is an $\widehat{\F}_+$-stopping time, and a sequence of right-continuous, $\F_{+}$-adapted  processes $(\mathcal{A}^{P,k})_{k \in \N}$ and $(\mathcal{M}^{P,k})_{k \in \N}$ on $[0,T]$ which are measurable in $(P,\omega,t)$, such that $(\mathcal{M}^{P,k}_t)_{0 \leq t \leq T}$ is a $P$-$\F_{+}$-martingale and
  \begin{align*}
  \big(\mathcal{M}^{P,k}\big)_t^{\alpha^P} + \big(\mathcal{A}^{P,k}\big)_t^{\alpha^P} = S_t^{\alpha^P},\quad\quad
  &P\big[\alpha^P<\infty\big] \leq\varepsilon,  \\
   \sum_{j=1}^{2^k} \Big|\big(\mathcal{A}^{P,k}\big)_{jT/2^k}^{\alpha^P} - \big(\mathcal{A}^{P,k}\big)_{(j-1)T/2^k}^{\alpha^P} \Big| \leq c^P  \ P\mbox{-a.s.},\quad\quad &\Big\Vert \big(\mathcal{M}^{P,k}\big)_{t}^{\alpha^P} \Big\Vert^2_{L^2(P)} \leq c^P.
  \end{align*}
  Here the first equality holds for all $\omega$ rather than $P$-a.s.\ and the usual notation for the ``stopped process'' is used; for instance, $S_t^{\alpha^P}=S_{\alpha^P\wedge t}$. To derive this assertion from the first step, we combine the arguments in the proof of~\cite[Proposition~3.6]{BeiglbockSchachermayerVeliyev.11} with Lemma~\ref{le:BorelcondexpPP}, Lemma~\ref{le:L1messbar}, Proposition~\ref{pr:mblKomlos} and Lemma~\ref{le:cadlagversionm}.

  Finally, to derive Lemma~\ref{le:thm1.6B/S/V} from the preceding step, we adapt the proof of~\cite[Theorem~1.6]{BeiglbockSchachermayerVeliyev.11}, again making crucial use of Lemma~\ref{le:L1messbar} and Proposition~\ref{pr:mblKomlos}.
\end{proof}

\begin{proposition}[Canonical Decomposition]\label{prop:predFVbdd}
  Let $S$ be a c\`adl\`ag, $\F_+$-adapted process with $S_0=0$ and $\sup_{t\geq0} |S_t| \leq 1$ such that $S$ is a $P$-$\F^P_+$-semimartingale for all $P\in\fP$.
  There exists a measurable function $(P,\omega,t)\mapsto B^P_t(\omega)$ such that for all $P\in\fP$, %$S-S_0-A$ is a martingale.
  \[
    S-B^P\quad \mbox{is a $P$-$\F^P_{+}$-martingale},
  \]
  $B^P$ is right-continuous, $\F_{+}$-adapted, $\F^P_{+}$-predictable and $P$-a.s.\ of finite variation.
\end{proposition}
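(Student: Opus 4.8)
The plan is to obtain $B^P$ as a measurable limit of the canonical finite-variation parts of $S$ stopped along a localizing sequence produced by Lemma~\ref{le:thm1.6B/S/V}. As usual it suffices to work on a finite horizon $[0,T]$ and then patch the resulting processes over $T=N\in\N$ using uniqueness of the canonical decomposition on overlapping intervals; so fix $T>0$ (recall $S_0=0$).

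First I would apply Lemma~\ref{le:thm1.6B/S/V} with $\varepsilon=\varepsilon_k:=2^{-k}$ for each $k\in\N$. This yields $\widehat{\F}_+$-stopping times $\alpha^{P,k}$ with $P[\alpha^{P,k}<\infty]\le 2^{-k}$, together with jointly measurable, $\widehat{\F}_+$-adapted processes $\mathcal{M}^{P,k},\mathcal{A}^{P,k}$ such that $\mathcal{M}^{P,k}$ is a $P$-$\F_+$-martingale, $\Var(\mathcal{A}^{P,k})\le c^{P,k}$ $P$-a.s., and $\mathcal{M}^{P,k}_t+\mathcal{A}^{P,k}_t=S_{\alpha^{P,k}\wedge t}$. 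Since $S_{\alpha^{P,k}\wedge t}$ is constant after $\alpha^{P,k}$, replacing $\mathcal{A}^{P,k}$ by its stopped version $\hat{\mathcal{A}}^{P,k}:=(\mathcal{A}^{P,k})^{\alpha^{P,k}}$ one checks that $S^{\alpha^{P,k}}=(\mathcal{M}^{P,k})^{\alpha^{P,k}}+\hat{\mathcal{A}}^{P,k}$, a decomposition into a true martingale and a process of $P$-integrable variation that is constant after $\alpha^{P,k}$. Applying the measurable compensator result, Corollary~\ref{co:Doobmeyer}, to $\hat{\mathcal{A}}^{P,k}$ produces a jointly measurable $B^{P,k}$ which is $\F_+$-adapted, $\F^P_+$-predictable, $P$-a.s.\ of finite variation (and, being the compensator of a process constant after $\alpha^{P,k}$, itself $P$-a.s.\ constant after $\alpha^{P,k}$) with $\hat{\mathcal{A}}^{P,k}-B^{P,k}$ a $P$-martingale. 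Hence $S^{\alpha^{P,k}}-B^{P,k}$ is a $P$-$\F^P_+$-martingale and, $B^{P,k}$ being predictable of finite variation, this is the canonical decomposition of the bounded semimartingale $S^{\alpha^{P,k}}$.

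Next I would glue these in $k$. For indices $k,k'$ the stopped process $S^{\alpha^{P,k}\wedge\alpha^{P,k'}}=(S^{\alpha^{P,k}})^{\alpha^{P,k'}}$ has, by uniqueness of the canonical decomposition and the fact that stopping commutes with it, finite-variation part equal to both $(B^{P,k})^{\alpha^{P,k'}}$ and $(B^{P,k'})^{\alpha^{P,k}}$; consequently $B^{P,k}=B^{P,k'}$ on $[\![0,\alpha^{P,k}\wedge\alpha^{P,k'}]\!]$ up to a $P$-null set. Since $\sum_k P[\alpha^{P,k}<\infty]\le\sum_k 2^{-k}<\infty$, Borel--Cantelli gives, for each $P$, that $P$-a.s.\ $\alpha^{P,k}=\infty$ for all large $k$; hence the sequence $(B^{P,k})_k$ is $P$-a.s.\ eventually constant in $k$, uniformly on $[0,T]$. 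I would therefore set $B^P_t:=\limsup_k B^{P,k}_t$, which is jointly measurable in $(P,\omega,t)$ and $\F_+$-adapted as a $\limsup$ of such processes and which $P$-a.s.\ coincides with $B^{P,k}$ for all large $k$. After passing to an appropriately measurable right-continuous version (via Lemma~\ref{le:productm} and Lemma~\ref{le:cadlagversionm}), $B^P$ is right-continuous, $P$-a.s.\ of finite variation, and $P$-indistinguishable from the $\F^P_+$-predictable process $B^{P,k}$, hence $\F^P_+$-predictable.

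Finally, the consistency relation upgrades to $(B^P)^{\alpha^{P,k}}=B^{P,k}$ $P$-a.s.\ (recall $B^{P,k}$ is constant after $\alpha^{P,k}$), so that $(S-B^P)^{\alpha^{P,k}}=S^{\alpha^{P,k}}-B^{P,k}$ is a $P$-$\F^P_+$-martingale for every $k$; as $\alpha^{P,k}\to\infty$ $P$-a.s., this exhibits $S-B^P$ as a $P$-$\F^P_+$-local martingale, which together with the properties of $B^P$ is the asserted canonical decomposition in the sense of Definition~\ref{def:semimartingale}. The main obstacle is not any single estimate---those are delivered by Lemma~\ref{le:thm1.6B/S/V} and Corollary~\ref{co:Doobmeyer}---but rather the bookkeeping that keeps joint measurability in $P$ intact through the localization and the passage $k\to\infty$, and the verification that the locally defined canonical finite-variation parts are consistent enough to be glued into a single process; this is where uniqueness of the canonical decomposition and the measurability of $k\mapsto B^{P,k}$, of $\limsup_k$, and of the stopping times $\alpha^{P,k}$ are used.
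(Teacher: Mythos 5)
Your strategy is essentially the paper's: apply Lemma~\ref{le:thm1.6B/S/V} with $\varepsilon=2^{-k}$, compensate the bounded-variation part via Corollary~\ref{co:Doobmeyer} to obtain the canonical decomposition of the stopped process, send $k\to\infty$ by Borel--Cantelli, and glue using uniqueness of the canonical decomposition; your consistency relation between $B^{P,k}$ and $B^{P,k'}$ is correct. The genuine gap is in the passage from the family $(B^{P,k})_k$ to the single process $B^P$. Setting $B^P:=\limsup_k B^{P,k}$ yields right-continuous paths only on the $P$-full set where the sequence is eventually constant; off that set a $\limsup$ of right-continuous functions need not be right-continuous, while under the paper's conventions the proposition demands that \emph{every} path of $B^P$ be right-continuous (only the finite-variation property carries a ``$P$-a.s.'' qualifier), together with $\F_+$-adaptedness. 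Your proposed repair via Lemma~\ref{le:cadlagversionm} does not apply: that lemma guarantees that the regularized process is a $P$-modification only when $f(P,\cdot,\cdot)$ is a $P$-$\F_+$-supermartingale with right-continuous expectation, and your $B^P$ is a predictable finite-variation process, in general neither a super- nor a submartingale. The correct device---and the one the paper uses---is to pass to the nondecreasing stopping times $\beta^{P,k}:=\inf_{j\geq k}\alpha^{P,j}$ (still $\F_+$-stopping times, jointly measurable in $P$, tending to infinity $P$-a.s.\ by your Borel--Cantelli step) and glue pathwise over right-open stochastic intervals,
\begin{equation*}
  B^P:=\sum_{k\geq1} B^{P,k}\,\1_{[\![\beta^{P,k-1},\beta^{P,k}[\![},\qquad \beta^{P,0}:=0,
\end{equation*}
which is everywhere right-continuous, $\F_+$-adapted and jointly measurable by construction (here your pre-stopping of $\mathcal{A}^{P,k}$ at $\alpha^{P,k}$ pays off, since then $B^{P,k}$ itself can be glued); your uniqueness/consistency argument then identifies this process $P$-a.s.\ with the canonical finite-variation part, because $\beta^{P,k}\leq\alpha^{P,k}\wedge\alpha^{P,j}$ for all $j\geq k$ and $\beta^{P,k}\uparrow\infty$ $P$-a.s.

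Two smaller points. The same replacement is needed at the end: a localizing sequence must be nondecreasing, and the $\alpha^{P,k}$ produced by Lemma~\ref{le:thm1.6B/S/V} need not be, so localize $S-B^P$ with $(\beta^{P,k})_k$ rather than $(\alpha^{P,k})_k$. Second, you conclude that $S-B^P$ is a $P$-$\F^P_+$-\emph{local} martingale while the statement asserts a martingale; be aware of this discrepancy, although it is not specific to your route---the gluing argument (the paper's included) really only identifies the canonical decomposition, and for the use made of this proposition in Corollary~\ref{co:predFVm} only the predictable finite-variation part $B^P$ matters.
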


\begin{proof}
  We first fix $T>0$ and consider the stopped process $Y= S^T$. For each $n\in\N$, let $\alpha^{P,n}$, $\mathcal{M}^{P,n}$ and $\mathcal{A}^{P,n}$ be the stopping times and processes provided by Lemma~\ref{le:thm1.6B/S/V} for the choice $\eps=2^{-n}$; that is, $P[\alpha^{P,n}< \infty]<2^{-n}$ and
  \begin{equation*}
  Y^{\alpha^{P,n}} = \mathcal{M}^{P,n} + \mathcal{A}^{P,n}.
  \end{equation*}
  By Corollary~\ref{co:Doobmeyer}, we can construct the compensator of  $\mathcal{A}^{P,n}$ with respect to $P$-$\F^P_{+}$, denoted by $\{\mathcal{A}^{P,n}\}^P$, such that $\{\mathcal{A}^{P,n}\}^P$ is right-continuous, $\F_{+}$-adapted and $P$-a.s.\ of finite variation, and $(P,\omega,t)\mapsto \{\mathcal{A}^{P,n}\}^P_t(\omega)$ is measurable. We define the process $\overline{\mathcal{M}}^{P,n}$ by
  \begin{equation*}
  \overline{\mathcal{M}}^{P,n} = \mathcal{M}^{P,n} + \mathcal{A}^{P,n}- \{\mathcal{A}^{P,n}\}^P.
  \end{equation*}
  By construction, $\overline{\mathcal{M}}^{P,n}$ is a right-continuous, $\F_{+}$-adapted $P$-$\F^P_{+}$-martingale and $(P,\omega,t)\mapsto \overline{\mathcal{M}}^{P,n}_t(\omega)$ is measurable. Furthermore,
  \[%begin{equation}\label{eq:prop:predFVbdd1}
    Y^{\alpha^{P,n}} = \overline{\mathcal{M}}^{P,n} + \{\mathcal{A}^{P,n}\}^P
  \]%end{equation}
  is the canonical decomposition of the $P$-$\F^P_{+}$-semimartingale $Y^{\alpha^{P,n}}$.

  We have $\sum_{n \in \N} P\{\alpha^{P,n}< \infty\}<\infty$ for each $P \in \fP$. By the Borel--Cantelli Lemma, this implies that $\lim_{n \to \infty} \alpha^{P,n} = \infty$ $P$-a.s.
  Let
  \begin{equation*}
    \beta^{P,n}:= \inf_{k\geq n} \alpha^{P,k}.
  \end{equation*}
  Then $\beta^{P,n}$ are $\F_{+}$-stopping times increasing to infinity $P$-a.s.\ for each $P$ and $(P,\omega)\mapsto \beta^{P,n}(\omega)$ is an $\widehat{\F}_+$-stopping time for each $n$. As $\beta^{P,n+1}\wedge \alpha^{P,n}=\beta^{P,n}$, we have
  \begin{equation*}
    Y^{\beta^{P,n}} = \big(Y^{\alpha^{P,n}}\big)^{\beta^{P,n+1}} = \big(\overline{\mathcal{M}}^{P,n}\big)^{\beta^{P,n+1}} + \big(\{\mathcal{A}^{P,n}\}^P\big)^{\beta^{P,n+1}},
  \end{equation*}
  which is the canonical decomposition of $Y^{\beta^{P,n}}$. Thus, by uniqueness of the canonical decomposition,
  \begin{align*}
  Y&%= \sum_{n=1}^\infty Y^{\beta^{P,n}} \,\1_{[\![\beta^{P,n-1},\beta^{P,n}]\!]} \\
  &= \sum_{n=1}^\infty \big(\overline{\mathcal{M}}^{P,n}\big)^{\beta^{P,n+1}} \,\1_{[\![\beta^{P,n-1},\beta^{P,n}[\![} +  \sum_{n=1}^\infty \big(\{\mathcal{A}^{P,n}\}^P\big)^{\beta^{P,n+1}}  \,\1_{[\![\beta^{P,n-1},\beta^{P,n}[\![}
  \end{align*}
  is the canonical decomposition of $Y$, where we have set $\beta^{P,0}:=0$. Denote the two sums on the right-hand side by $M^{P,T}$ and $B^{P,T}$, respectively, and recall that $Y=S^T$. The decomposition of the full process $S$ is then given by
  \[
    S=M^P+B^P:=\sum_{T=1}^\infty M^{P,T} \,\1_{[\![T-1,T[\![}+\sum_{T=1}^\infty B^{P,T} \,\1_{[\![T-1,T[\![}.
  \]
  By construction, these processes have the required properties.
\end{proof}

\section{Measurable Semimartingale Characteristics}\label{se:characteristics}

In this section, we construct a measurable version of the characteristics $(B^P,C,\nu^P)$ of $X$ as stated in Theorem~\ref{th:charactMbl}. The conditions of that theorem are in force throughout; in particular, $X$ is a c\`adl\`ag, $\F$-adapted process. We recall that the set $\fP_{sem}$ of all $P\in\fP(\Omega)$ under which $X$ is a semimartingale is measurable (Corollary~\ref{co:P_sem}) and that a truncation function $h$ has been fixed. When we refer to the results of Section~\ref{se:auxResults}, they are to be understood with the choice $\fP=\fP_{sem}$.

As mentioned in the preceding section, the existence of the first characteristic $B^P$ is a consequence of Proposition~\ref{prop:predFVbdd}.

\begin{corollary}\label{co:predFVm}
  There exists a measurable function $\fP_{sem}\times \Omega\times \R_+ \to \R^d$, $(P,\omega,t)\mapsto B^P_t(\omega)$
  such that for all $P\in\fP_{sem}$, $B^P$ is an $\F_{+}$-adapted, $\F^P_{+}$-predictable process with right-continuous, $P$-a.s.\ finite variation paths, and $B^P$ is a version of the first characteristic of $X$ with respect to $P$.
\end{corollary}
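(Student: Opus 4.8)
The plan is to reduce the construction of the first characteristic $B^P$ to the Canonical Decomposition result (Proposition~\ref{prop:predFVbdd}), which already handles bounded semimartingales with measurable dependence on $P$. Recall that by definition the first characteristic is associated with the process
\[
  \widetilde{X}_t:= X_t-X_0-\sum_{0\leq s \leq t} \big(\Delta X_s- h(\Delta X_s)\big),
\]
which has bounded jumps since $h$ is a truncation function. Thus $\widetilde{X}$ is a special semimartingale under each $P\in\fP_{sem}$, and $B^P$ is precisely the predictable finite-variation part in its canonical decomposition. The first step is therefore to verify that $(P,\omega,t)\mapsto \widetilde{X}_t(\omega)$ is jointly measurable and $\F_+$-adapted; this follows from the measurability of $X$ together with the fact that the jump-sum $\sum_{0\leq s\leq t}(\Delta X_s - h(\Delta X_s))$ is a right-continuous, adapted functional of the path, so one applies Lemma~\ref{le:productm} on the extended space $\widehat{\Omega}$.

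The obstacle is that $\widetilde{X}$ itself need not be bounded, whereas Proposition~\ref{prop:predFVbdd} requires $\sup_{t}|S_t|\leq 1$. The remedy is the standard localization that is already set up in Section~\ref{se:semimartPropertyAndPsem}: I would introduce the $\F$-stopping times
\[
  T_m:= \inf\big\{ t\geq 0 \, \big| \, |\widetilde{X}_t|\geq m \ \mbox{or} \ |\widetilde{X}_{t-}|\geq m \big\}
\]
and the rescaled stopped processes $\widetilde{X}^m := (m+1)^{-1}\widetilde{X}_{T_m\wedge\,\cdot}$, which satisfy $\sup_t|\widetilde{X}^m_t|\leq 1$, start at $0$, and are $\F_+$-adapted $P$-$\F^P_+$-semimartingales for every $P\in\fP_{sem}$ by Proposition~\ref{prop:SemFunab}. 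Since $T_m$ is an $\F$-stopping time not depending on $P$, the map $(P,\omega,t)\mapsto \widetilde{X}^m_t(\omega)$ is jointly measurable. Applying Proposition~\ref{prop:predFVbdd} to each $\widetilde{X}^m$ produces a measurable $(P,\omega,t)\mapsto \overline{B}^{P,m}_t(\omega)$ which is the predictable finite-variation part of the canonical decomposition of $\widetilde{X}^m$, with all the stated adaptedness and predictability properties.

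It remains to patch the localized pieces into a single measurable process. On the stochastic interval $[\![0,T_m[\![$ the rescaled compensators are consistent by uniqueness of the canonical decomposition, so I would set
\[
  B^P := (m+1)\,\overline{B}^{P,m}\quad\text{on}\quad [\![0,T_m[\![,
\]
or equivalently define $B^P$ by glueing the increments over the intervals $[\![T_{m-1},T_m[\![$, in the same manner as the final gluing step in the proof of Proposition~\ref{prop:predFVbdd}. Because $T_m\uparrow\infty$ as $m\to\infty$ (the paths of $\widetilde{X}$ are c\`adl\`ag, hence locally bounded), this defines $B^P_t(\omega)$ for all $t$, and joint measurability is preserved since the stopping times $T_m$ are $\F$-measurable and the indicator processes $\1_{[\![T_{m-1},T_m[\![}$ are jointly measurable. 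The resulting $B^P$ inherits right-continuity, $\F_+$-adaptedness, $\F^P_+$-predictability, and $P$-a.s.\ finite variation on compacts from the $\overline{B}^{P,m}$, and by the uniqueness and filtration-invariance of the canonical decomposition established in Proposition~\ref{prop:SemFunab} it agrees $P$-a.s.\ with the predictable part of $\widetilde{X}$, i.e.\ it is a version of the first characteristic $B^P$ of $X$. The main point requiring care is ensuring that the gluing respects the $\F^P_+$-predictability across the (predictable) stopping times $T_m$ and that measurability in $P$ is not lost at this step; this is exactly the type of argument already carried out at the end of the proof of Proposition~\ref{prop:predFVbdd}, so it goes through verbatim with $T$ replaced by $T_m$.
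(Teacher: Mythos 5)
Your proposal is correct and follows essentially the same route as the paper: truncate the large jumps via $\widetilde{X}$, stop at the exit times $T_m$ to reduce to the bounded case, apply Proposition~\ref{prop:predFVbdd} to get measurable predictable parts $B^{m,P}$, and glue them along $[\![T_{m-1},T_m[\![$ using uniqueness of the canonical decomposition. The only slip is cosmetic: since the jumps of $\widetilde{X}$ are $h(\Delta X_s)$, the stopped process is bounded by $m+\|h\|_{\infty}$ rather than $m+1$, so the normalizing factor should be $(m+\|h\|_{\infty})^{-1}$ instead of $(m+1)^{-1}$.
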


\begin{proof}
  We may assume that $X_0=0$. Let
  \begin{equation*}
    \widetilde{X}_t:= X_t-\sum_{0\leq s \leq t} \big(\Delta X_s- h(\Delta X_s)\big),
  \end{equation*}
  $T_0=0$ and $T_m= \inf \{ t \geq 0 \, | \, |\widetilde{X}_t| >m \}$. As $\widetilde{X}$ has c\`adl\`ag paths, each $T_m$ is an $\F_{+}$-stopping time and $T_m\to\infty$. Define
  \begin{equation*}
    \widetilde{X}^m = \widetilde{X}_{\cdot\wedge T_m};
  \end{equation*}
  then $\widetilde{X}^m$ is a c\`adl\`ag, $\F_{+}$-adapted $P$-$\F^P_{+}$ semimartingale for each $P \in \fP_{sem}$ and $|\widetilde{X}^m|\leq m+\|h\|_{\infty}$. We use Proposition~\ref{prop:predFVbdd} to obtain the corresponding predictable finite variation process $B^{m,P}$ of the canonical decomposition of $\widetilde{X}^m$, and then
  \begin{equation*}
    B^P = \sum_{m\geq1} B^{m,P}  \,\1_{[\![T_{m-1},T_m[\![}
  \end{equation*}
  has the desired properties.
\end{proof}

The next goal is to construct the third characteristic of $X$, the compensator $\nu^P$ of the jump measure of $X$, and its decomposition as stated in Theorem~\ref{th:charactMbl}. (The second characteristic is somewhat less related to the preceding results and thus treated later on.) To this end, we first provide a variant  of the disintegration theorem for measures on product spaces. As it will be used for the decomposition of $\nu^P$, we require a version where the objects depend measurably on an additional parameter (the measure $P$). We call a kernel stochastic if its values are probability measures, whereas finite kernel refers to the values being finite measures. A Borel space is (isomorphic to) a Borel subset of a Polish space.

\begin{lemma}\label{le:decompkern}
  Let $(G,\cG)$ be a measurable space, $(Y,\mathcal{Y})$ a separable measurable space and $(Z,\B(Z))$ a Borel space. Moreover, let $\kappa\big(g,d(y,z)\big)$ be a finite kernel on $(Y \times Z, \mathcal{Y}\otimes \B(Z))$ given $(G,\cG)$ and let $\widehat{\kappa}(g,dy)$ be its marginal on~$Y$,
  \[
    \widehat{\kappa}(g,A):= \kappa(g,A\times Z),\quad A \in \mathcal{Y}.
  \]
  There exists a stochastic kernel $\alpha\big((g,y),dz\big)$ on $(Z,\B(Z))$ given $(G\times Y,\cG\otimes\mathcal{Y})$ such that
  \[
  \kappa(g,A\times B) = \int_{A} \alpha\big((g,y),B\big)\,\widehat{\kappa}(g,dy), \ \ \ A \in \mathcal{Y}, \ B \in \B(Z),\ g\in G.
  \]
\end{lemma}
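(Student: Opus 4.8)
The plan is to prove this parametrized disintegration by reducing it to the classical (non-parametrized) disintegration theorem applied on a suitably enlarged base space, so that the parameter $g$ is absorbed into the underlying measurable space. The key observation is that the parameter dependence is exactly the kind of joint measurability that the standard theorem can handle if we treat $G$ as part of the ``base.''

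First I would normalize: since $\kappa$ is a finite kernel, the total mass $g \mapsto \kappa(g, Y\times Z) = \widehat{\kappa}(g, Y)$ is a measurable function, and wherever it is strictly positive we may normalize $\kappa$ to a stochastic kernel (on the set where the mass vanishes, the disintegration identity holds trivially for any choice of $\alpha$, so we may set $\alpha$ to an arbitrary fixed probability measure there). Thus it suffices to treat the case where $\kappa(g,\cdot)$ is a probability measure for every $g$. Next I would view the data as a single stochastic kernel on the product target $Y\times Z$ given the base $(G,\cG)$ and invoke the existence of regular conditional distributions. Concretely, consider the measurable space $G\times Y\times Z$; the family $\kappa(g, d(y,z))$ together with a (arbitrary) reference probability on $G$ does \emph{not} quite give a single measure because $G$ may carry no measure, so instead I would apply a disintegration theorem that is itself stated for kernels rather than for a single measure.

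The cleanest route is to cite the kernel-valued disintegration available for Borel spaces. Since $(Z,\B(Z))$ is a Borel space and $(Y,\mathcal{Y})$ is separable, one can apply the existence theorem for regular conditional distributions in the form given, e.g., in Bertsekas--Shreve \cite[Proposition~7.27, p.\,139]{BertsekasShreve.78} or Dellacherie--Meyer: given a stochastic kernel $\kappa$ on $Y\times Z$ indexed by a measurable parameter, there exists a kernel $\alpha$ on $Z$ given $G\times Y$ satisfying the disintegration identity, with joint measurability in $(g,y)$. The separability of $\mathcal{Y}$ ensures $\mathcal{Y}\otimes\B(Z)$ is countably generated so that conditional probabilities with respect to the marginal $\widehat\kappa$ exist, and the Borel-space hypothesis on $Z$ is precisely what guarantees these conditional probabilities can be chosen as a genuine kernel (regular version) rather than merely $\widehat\kappa$-a.e.\ additive set functions. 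The identity
\[
  \kappa(g,A\times B)=\int_A \alpha\big((g,y),B\big)\,\widehat\kappa(g,dy)
\]
then holds by construction on product sets, which determine $\kappa$ by a monotone class argument.

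The main obstacle I anticipate is securing the \emph{joint} measurability of $\alpha$ in the pair $(g,y)$ rather than in $y$ alone for each fixed $g$. A fixed-$g$ disintegration is classical, but gluing these together measurably in $g$ requires that the construction be ``uniform'' in the parameter. This is exactly why the standard references are stated for kernels on the full base $(G\times Y, \cG\otimes\mathcal{Y})$: the parameter $g$ is simply adjoined to the conditioning variable. I would therefore frame the whole argument so that the disintegration theorem is applied once, with conditioning space $G\times Y$ and target $Z$, rather than pointwise in $g$; the separability assumptions in the hypotheses are used precisely to legitimize this single application. Apart from this measurability point, the remaining verifications (the normalization step, the monotone class extension from rectangles to $\mathcal{Y}\otimes\B(Z)$, and the a.e.-versus-everywhere bookkeeping for the null-mass set) are routine, and I would only sketch them.
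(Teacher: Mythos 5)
There is a genuine gap at the core step. The citation you lean on---Bertsekas--Shreve, Proposition~7.27 (or a Dellacherie--Meyer analogue)---does not cover the situation of the lemma: that result requires the conditioning space, i.e.\ the space $(Y,\mathcal{Y})$ whose marginal one integrates against, to be a \emph{Borel} space, not merely a separable measurable space. This distinction is precisely the point of the lemma. A separable measurable space has a countably generated $\sigma$-field but need not be countably separated; in the paper's application one takes $(Y,\mathcal{Y})=(\Omega\times\R_+,\cP)$ with $\cP$ the predictable $\sigma$-field, which in general fails to separate points of $\Omega\times\R_+$ (at time $t$ it only sees $\cF_{t-}$) and hence is not isomorphic to a Borel subset of a Polish space. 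So the statement you attribute to the literature---``given a kernel on $Y\times Z$ indexed by a measurable parameter, with $Y$ separable and $Z$ Borel, there is a jointly measurable disintegrating kernel''---\emph{is} the lemma itself, not an available theorem. Your reading of the hypotheses is also off: for fixed $g$, conditional probabilities (as $\widehat{\kappa}(g,\cdot)$-a.e.\ defined versions) exist with no separability assumption whatsoever, since $Z$ is Borel; separability of $\mathcal{Y}$ enters only to make the construction canonical enough to be jointly measurable in $(g,y)$. That joint measurability is exactly the obstacle you correctly single out, but you then delegate it to a citation whose hypotheses fail.

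What is needed instead is to rerun the proof of Proposition~7.27 in this generality, and that is what the paper does: by separability choose a refining sequence of finite partitions $(A_n^m)_m$ of $Y$ generating $\mathcal{Y}$, form the discrete disintegrations $\alpha_n((g,y),B)=\sum_m \kappa(g,A_n^m\times B)\,/\,\widehat{\kappa}(g,A_n^m)\,\1_{A_n^m}(y)$, which are jointly measurable by inspection because $g\mapsto\kappa(g,C)$ is measurable for each fixed set $C$, and pass to the limit by martingale convergence, using the Borel structure of $Z$ (via a countable generating algebra and regularity) to upgrade the limiting finitely additive set functions to an honest stochastic kernel. Alternatively, your reduction strategy could be salvaged, but it requires an ingredient you do not supply: map $Y$ into $[0,1]$ by the Marczewski function $\phi=\sum_n 2\cdot 3^{-n}\1_{A_n}$ of a countable generator $(A_n)$ of $\mathcal{Y}$, so that $\sigma(\phi)=\mathcal{Y}$; push $\kappa(g,\cdot)$ forward under $(\phi,\id_Z)$ to a kernel on $[0,1]\times Z$; apply the Borel-space disintegration there (checking, in addition, that its proof tolerates an arbitrary measurable parameter space $(G,\cG)$, since the published statement assumes a metrizable one); and pull the resulting kernel back by composing with $\phi$, using $\mathcal{Y}=\phi^{-1}(\B([0,1]))$ to recover the identity on all of $\mathcal{Y}\times\B(Z)$. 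Without one of these two ingredients, the proposal does not prove the lemma.
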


\begin{proof}
  This result can be found e.g.\ in \cite[Proposition~7.27, p.\,135]{BertsekasShreve.78}, in the special case where $Y$ is a Borel space (and $\kappa\big(g,d(y,z)\big)$ is a stochastic kernel). In that case, one can identify $Y$ with an interval and the proof of \cite[Proposition~7.27, p.\,135]{BertsekasShreve.78} makes use of dyadic partitions generating $\mathcal{Y}$. In the present case, we can give a similar proof where we use directly the separability of $\mathcal{Y}$; namely, we can find a refining sequence of finite partitions of $Y$ which generates $\mathcal{Y}$ and apply martingale convergence arguments to the corresponding sequence of finite $\sigma$-fields. The details are omitted.
\end{proof}

In order to apply the disintegration result with $(Y,\mathcal{Y})=(\Omega\times\R_+,\cP)$, we need the following observation.

\begin{lemma}\label{le:predictableSeparable}
  The predictable $\sigma$-field $\cP$ is separable.
\end{lemma}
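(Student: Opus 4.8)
The plan is to exhibit an explicit countable family of predictable sets (or processes) that generates $\cP$, exploiting the separability assumptions on the filtration $\F=(\cF_t)$ built into the standing hypotheses of Theorem~\ref{th:charactMbl}. Recall that $\cP$ is generated by the $\F_-$-adapted processes that are left-continuous on $(0,\infty)$; equivalently, and more usefully for a counting argument, $\cP$ is generated by the collection of ``predictable rectangles'' of the form $(s,t]\times A$ with $A\in\cF_s$, together with the sets $\{0\}\times A_0$ with $A_0\in\cF_0$. This reduction to a generating family of rectangles is the standard first move, and I would state it at the outset.

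Next I would use separability of each $\cF_s$. Since $\cF_s$ is separable, it is generated by a countable algebra (or a countable family of sets) for each fixed $s$; the task is to assemble these into a single countable family that still generates $\cP$. The key point is that one does not need \emph{all} left endpoints $s\in\R_+$: it suffices to let $s$ range over the rationals $\Q_+$. Concretely, I would take a countable generating family $(A^s_n)_{n\in\N}$ of $\cF_s$ for each $s\in\Q_+$, together with a countable generating family of $\cF_0$, and form the countable collection
\[
  \cC=\big\{(s,t]\times A^s_n \ :\ s<t,\ s,t\in\Q_+,\ n\in\N\big\}\cup\big\{\{0\}\times A^0_n\ :\ n\in\N\big\}.
\]
I would then argue that $\sigma(\cC)=\cP$. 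The inclusion $\sigma(\cC)\subseteq\cP$ is immediate since each generator is predictable. For the reverse inclusion, the point is to recover an arbitrary predictable rectangle $(s,t]\times A$ with $A\in\cF_s$, $s\in\R_+$ arbitrary, from the countable family: one approximates a general $s$ by rationals $s_k\downarrow s$ from above (using $(s,t]=\bigcup_k (s_k,t]$ together with right-continuity-type limiting arguments), and one approximates a general $A\in\cF_s$ using the monotone-class/Dynkin argument from the countable generators, noting that $\cF_s\subseteq\cF_{s_k}$ so the sets $A$ are available at the rational levels just above $s$.

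The step I expect to be the main obstacle is the passage from rational left endpoints back to arbitrary ones while keeping the $\sigma$-field generated by the countable family large enough, since $\cF_s$ may be strictly smaller than $\bigcap_{s_k\downarrow s}\cF_{s_k}$ only in the sense of right-continuity; here one must be careful that the filtration is \emph{not} assumed right-continuous, so $A\in\cF_s$ is automatically in $\cF_{s_k}$ for $s_k>s$, which is exactly what makes the approximation go through. Thus the correct direction of approximation (rationals decreasing to $s$ from above, using $A\in\cF_s\subseteq\cF_{s_k}$) is essential and I would emphasize it. A clean alternative, which I would mention as a fallback, is to invoke the standard fact that $\cP$ is the $\sigma$-field on $\Omega\times\R_+$ generated by the map $(\omega,t)\mapsto(\omega,t)$ into a countably generated predictable structure: one shows directly that the countably many left-continuous adapted processes $\1_{(s,\infty)}(t)\,\1_{A}(\omega)$ with $s\in\Q_+$ and $A$ ranging over countable generators of $\cF_s$ separate points of $\cP$ and generate it, which sidesteps delicate limiting arguments by reducing everything to a monotone-class verification. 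Either route yields a countable generating family, hence $\cP$ is separable, completing the proof.
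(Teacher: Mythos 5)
Your argument follows essentially the same route as the paper's: the paper simply short-circuits your rational-endpoint reduction by citing \cite[Theorem~IV.67, p.\,125]{DellacherieMeyer.78}, which states directly that $\cP$ is generated by the sets $\{0\}\times A$, $A\in\cF_{0-}$, and $(s,t]\times A$, $A\in\cF_{s-}$, $0<s<t\in\Q$, and then inserts countable generators of the (separable) $\sigma$-fields $\cF_{s-}$. Your hands-on verification---approximating a general left endpoint by rationals $s_k\downarrow s$ and using $A\in\cF_s\subseteq\cF_{s_k}$ together with a monotone class argument---is exactly the content of that citation, and that part of your proof is carried out correctly, including the correct direction of approximation.

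There is, however, one concrete slip relative to the paper's conventions, at $t=0$. The paper sets $\cF_{0-}=\{\emptyset,\Omega\}$ (Section~2.1; this convention is precisely what makes $\cP$ coincide with the predictable $\sigma$-field of $\F_+$), and $\cP$ is generated by the $\F_-$-adapted processes that are left-continuous on $(0,\infty)$. Consequently, the time-zero section of \emph{every} set in $\cP$ is either $\emptyset$ or $\Omega$, so a set $\{0\}\times A_0$ with $A_0\in\cF_0$ nontrivial is \emph{not} predictable here. Your family $\cC$ therefore generates a $\sigma$-field strictly larger than $\cP$ whenever $\cF_0$ is nontrivial, and the inclusion $\sigma(\cC)\subseteq\cP$, which you call immediate, fails. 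This cannot be waved away by saying that a larger $\sigma$-field is separable: sub-$\sigma$-fields of countably generated $\sigma$-fields need not be countably generated, so separability of $\sigma(\cC)$ does not transfer to $\cP$. The repair is one line: replace the generators $\{0\}\times A^0_n$, $A^0_n\in\cF_0$, by the single set $\{0\}\times\Omega$ (equivalently, take $A_0\in\cF_{0-}$), after which your construction yields exactly $\cP$, in agreement with the paper. Note that your fallback family of processes $\1_{(s,\infty)}\1_{A}$ with $s\in\Q_+$ avoids this issue automatically, since it contains no time-zero rectangles and still recovers $\{0\}\times\Omega$ as the complement of $\bigcup_{s\in\Q_+}\,(s,\infty)\times\Omega$.
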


\begin{proof}
  The $\sigma$-field $\cP$ is generated by the sets
  \[
    \{0\}\times A, \ A\in \cF_{0-}\quad\mbox{ and }\quad (s,t]\times A, \ \ A\in \cF_{s-}, \  0<s<t \in \Q;
  \]
  cf.\ \cite[Theorem~IV.67, p.\,125]{DellacherieMeyer.78}. Since $\cF_{0-}$ is trivial and $\cF_s$ is separable for $s\geq0$, it follows that each $\cF_{s-}$ is separable as well. Let $(A_s^n)_{n\geq1}$ be a generator for $\cF_{s-}$; then
  \[
    \{0\}\times A_0^n \quad\mbox{ and }\quad (s,t]\times A^n_s, \ \   0<s<t \in \Q,\ n\geq 1
  \]
  yield a countable generator for $\cP$.
\end{proof}

We can now construct the third characteristic and its decomposition. For the following statement, recall the set $\overline{\cL}$ from~\eqref{eq:defOverlineL} and that it has been endowed with its Borel $\sigma$-field.

\begin{proposition}\label{prop:mversioncomp}
  There exists a measurable function
  \begin{equation*}
    \fP_{sem}\times \Omega \to \overline{\cL}, \ \ \ (P, \omega) \mapsto \nu^P(\omega,dt,dx)
  \end{equation*}
  such that for all $P \in \fP_{sem}$, the $\F_+^P$-predictable random measure $\nu^P(\cdot,dt,dx)$ is the $P$-$\F_+^P$-compensator of $\mu^X$.
  Moreover, there exists a decomposition
  \begin{equation*}
    \nu^P(\cdot,dt,dx) = K^P(\cdot,t,dx)\,dA^P_t \quad P\as
  \end{equation*}
  where
  \begin{enumerate}[topsep=3pt, partopsep=0pt, itemsep=2pt,parsep=2pt]
    \item $(P,\omega,t)\mapsto A^P_t(\omega)$ is measurable and for all $P \in \fP_{sem}$, $A^P$ is an $\F_{+}$-adapted, $\F^P_+$-predictable, $P$-integrable process with right-continuous and $P$-a.s.\ increasing paths,
    \item $(P,\omega,t)\mapsto K^P(\omega,t,dx)$ is a kernel on $(\R^d,\B(\R^d))$ given $(\fP_{sem}\times \Omega\times\R_+, \B(\fP_{sem})\otimes \cF\otimes\cB(\R_+))$ and for all $P \in \fP_{sem}$, $(\omega,t)\mapsto K^P(\omega,t,dx)$ is a kernel on $(\R^d,\B(\R^d))$ given $(\Omega\times\R_+, \P^P)$.
  \end{enumerate}
\end{proposition}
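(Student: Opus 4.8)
The plan is to build the decomposition $\nu^P=K^P\,dA^P$ first, from compensators of concrete jump functionals, and then to \emph{define} $\nu^P$ by this formula and verify that it is both $\overline{\cL}$-measurable and the compensator. Start from the observable, law-independent increasing process $V_t:=\big((|x|^2\wedge 1)*\mu^X\big)_t=\sum_{s\le t}(|\Delta X_s|^2\wedge 1)$, which is $\F_+$-adapted, c\`adl\`ag and of locally integrable variation. A standard localization together with Corollary~\ref{co:Doobmeyer} yields its $P$-$\F^P_+$-compensator; after a routine reweighting (to obtain genuine $P$-integrability rather than only local integrability) this produces a measurable $(P,\omega,t)\mapsto A^P_t(\omega)$ with the properties in~(i). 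Fix a countable family $\mathcal{D}\subseteq C_b(\R^d)$ that is measure-determining and contains the constant function $1$. For each $\varphi\in\mathcal{D}$, the process $V^\varphi:=\big((|x|^2\wedge 1)\varphi\big)*\mu^X$ is again of locally integrable variation, and Corollary~\ref{co:Doobmeyer} gives its compensator $A^{\varphi,P}$, measurable in $(P,\omega,t)$. Since the variation of $V^\varphi$ is dominated by $\|\varphi\|_\infty V$, comparison of compensators yields $\Var(A^{\varphi,P})\le\|\varphi\|_\infty A^P$, so $A^{\varphi,P}\ll A^P$ and there is a predictable Radon--Nikodym density $\theta^{\varphi,P}=dA^{\varphi,P}/dA^P$; this can be produced measurably in $P$ via the measurable-limit tools of Section~\ref{se:auxResults}.

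Next I would assemble the jump kernel by a measurable disintegration. For $(dA^P\otimes P)$-a.e.\ $(\omega,t)$ the map $\varphi\mapsto\theta^{\varphi,P}_t(\omega)$ is a positive, normalized linear functional on $\mathcal{D}$, hence the integral against a probability measure $\rho^P(\omega,t,dx)$ on $\R^d$ (morally the normalization of $(|x|^2\wedge 1)\cdot\nu^P$ by $A^P$). To turn this into a genuine kernel, measurably in $P$ and predictable in $(\omega,t)$, and to dispose of the $P$-dependent exceptional sets, I would invoke the measurable disintegration Lemma~\ref{le:decompkern} with $G=\fP_{sem}$, with $Y=\Omega\times\R_+$ equipped with the predictable $\sigma$-field $\cP$---which is separable by Lemma~\ref{le:predictableSeparable}, exactly the hypothesis required---and with $Z=\R^d$. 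To avoid circularity, the input finite kernel is built from the observable jump measure: on $[0,N]$ (and then glued over $N$ with weights $2^{-N}$) set
\[
  \kappa(P, A\times B):=E^P\Big[\big((\1_A\,(|x|^2\wedge 1)\,\1_B)*\mu^X\big)_N\Big],\quad A\in\cP,\ B\in\B(\R^d),
\]
whose measurability in $P$ follows from Lemma~\ref{le:BorelcondexpPP}. By the dual predictable projection its $Y$-marginal coincides with the measure induced by $A^P$, and the disintegrating kernel $\alpha\big((P,\omega,t),dx\big)=\rho^P(\omega,t,dx)$ is $\B(\fP_{sem})\otimes\cP$-measurable, hence jointly measurable and $\cP^P$-predictable in $(\omega,t)$. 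Setting $K^P(\omega,t,dx):=(|x|^2\wedge 1)^{-1}\rho^P(\omega,t,dx)$ on $\{x\ne 0\}$ gives~(ii).

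It then remains to define $\nu^P$ and verify its properties. Put $\nu^P(\omega,dt,dx):=K^P(\omega,t,dx)\,dA^P_t(\omega)$. For each $(P,\omega)$ its $(|x|^2\wedge 1)$-mass on $[0,N]$ equals $A^P_N(\omega)<\infty$, so $\nu^P(\omega)\in\overline{\cL}$; measurability of $(P,\omega)\mapsto\nu^P(\omega)\in\overline{\cL}$ then follows from Lemma~\ref{le:kernlevy}, since $\int(|x|^2\wedge 1)f\,d\nu^P=\int\!\int f(t,x)\,\rho^P(\omega,t,dx)\,dA^P_t(\omega)$ is measurable in $(P,\omega)$ for every measurable $f$. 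To identify $\nu^P$ as the $P$-$\F^P_+$-compensator of $\mu^X$, observe that for $\varphi\in\mathcal{D}$ one has $\big((|x|^2\wedge 1)\varphi\big)*\nu^P=\int\theta^{\varphi,P}\,dA^P=A^{\varphi,P}$, which by construction is the compensator of $\big((|x|^2\wedge 1)\varphi\big)*\mu^X$. A monotone-class argument over integrands of product form $H(\omega,t)(|x|^2\wedge 1)\varphi(x)$, combined with functional monotone convergence and uniqueness of the compensator, upgrades this to all nonnegative predictable $W$ and identifies $\nu^P$.

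The crux, I expect, is the second step: producing the kernel $K^P$ with \emph{simultaneous} joint measurability in the law $P$ and predictability in $(\omega,t)$, while reconciling the pathwise (for-all-$\omega$) form of the decomposition with the merely $P$-a.s.\ defined densities and their $P$-dependent null sets. This is precisely what the measurable disintegration (Lemma~\ref{le:decompkern}) and the separability of $\cP$ (Lemma~\ref{le:predictableSeparable}) are designed to furnish; the remaining analytic points---the localization and reweighting for $A^P$, the measurable Radon--Nikodym step, and the monotone-class verification---are routine given the tools of Section~\ref{se:auxResults}.
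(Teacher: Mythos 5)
Your overall architecture coincides with the paper's: construct a scalar increasing process $A^P$ by the measurable Doob--Meyer machinery of Section~4, disintegrate a jump-measure-induced kernel over $(\Omega\times\R_+,\cP)$ (separable by Lemma~\ref{le:predictableSeparable}) via Lemma~\ref{le:decompkern} with the law $P$ as the parameter, invert the weight to obtain $K^P$, and identify $\nu^P=K^P\,dA^P$ as the compensator by a monotone class argument. However, there is a genuine gap at the start which propagates through the proof: the weight $|x|^2\wedge 1$ is not adequate. The process $(|x|^2\wedge 1)\ast\mu^X$ has jumps bounded by $1$, hence is locally integrable, but it need not be $P$-integrable at any fixed time: take $K\in\cF_0$ integer-valued with $E^P[K]=\infty$ and let $X$ make $K$ unit jumps at deterministic times in $[0,1]$; then $\big((|x|^2\wedge 1)\ast\mu^X\big)_1=K$. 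Consequently, (a)~the compensator of this process, however you localize, is in general not $P$-integrable, so it cannot serve as the $A^P$ of assertion~(i); and (b)~your input kernel $\kappa(P,A\times B)=E^P\big[\big((\1_A\,(|x|^2\wedge 1)\,\1_B)\ast\mu^X\big)_N\big]$ can be infinite already for $A=\Omega\times[0,N]$, $B=\R^d$, so it is not a finite kernel and Lemma~\ref{le:decompkern} does not apply to it. The ``routine reweighting'' you defer is precisely the one non-routine ingredient: one needs a \emph{strictly positive, $\cP\otimes\cB(\R^d)$-measurable weight $V(\omega,t,x)$ depending on $(\omega,t)$, not on $x$ alone}, such that $V\ast\mu^X$ is bounded by $1$; this is the paper's Lemma~\ref{le:mversioncomp}, proved by a stopping-time construction. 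Moreover, this single weight must be used \emph{consistently} in both places: in the definition of $A^P$ (so that $V\ast\mu^X$ is of class~D, Proposition~\ref{prop:Doobmeyer} applies without localization, and $A^P$ is $P$-integrable) and in the kernel fed to the disintegration (so that it is a finite --- indeed sub-probability --- kernel whose $\cP$-marginal is $dA^P\otimes P$; with your mismatched choices the marginal identity you invoke fails). The weight is then undone by integrating $V^{-1}$ in the definition of $K^P$, exactly as you undo $(|x|^2\wedge 1)^{-1}$, which is why strict positivity of $V$ matters.

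Two smaller points. First, your intermediate paragraph constructing densities $\theta^{\varphi,P}$ and a Riesz-type functional is redundant --- you discard it yourself in favor of the disintegration, which is the right move. Second, for the map $(P,\omega)\mapsto\nu^P(\omega,\cdot)$ to take values in $\overline{\cL}$ \emph{everywhere} (not merely $P$-a.s.), you must, as the paper does, first replace $A^P$ by $A^P\1_G$ where $G$ is the measurable set of $(P,\omega)$ with increasing paths, and then cut the kernel by the measurable set where the resulting measure satisfies the defining conditions of $\overline{\cL}$ (finite $|x|^2\wedge1$-mass on compact time intervals and no mass on $\{0\}\times\R^d$ or $\R_+\times\{0\}$); your assertion that the $(|x|^2\wedge 1)$-mass on $[0,N]$ ``equals $A^P_N(\omega)<\infty$ for each $(P,\omega)$'' silently uses both corrections.
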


\begin{proof}
  We use the preceding results to adapt the usual construction of the compensator, with $P\in\fP_{sem}$ as an additional parameter.
  By a standard fact recalled in Lemma~\ref{le:mversioncomp} below, there is a $\mathcal{P}\otimes \mathcal{B}(\R^d)$-measurable function $V>0$ such that $0\leq V\ast \mu^X \leq 1$; recall the notation $V\ast \mu^X:=\int_0^\cdot \int_{\R^d} V(s,x)\,\mu^X(ds,dx)$. Define $A:= V \ast \mu^X$. We observe that $A$ is a c\`adl\`ag, $\F_+$-adapted process, uniformly bounded and increasing; thus, it is a $P$-$\F^P_+$-submartingale of class D for any $P \in \fP_{sem}$. By Proposition~\ref{prop:Doobmeyer}, we can construct the predictable process of the Doob-Meyer decomposition of $A$ with respect to $P$ and $\F^P_{+}$, denoted by $A^P$, such that $A^P$ is  $P$-integrable, $\F^P_+$-predictable, $\F_{+}$-adapted with right-continuous, $P$-a.s.\ increasing paths and $(P,\omega,t)\mapsto A^P_t(\omega)$ is measurable. Define a kernel on $(\Omega\times\R_+\times \R^d, \cP\otimes\B(\R^d))$ given $\big(\fP_{sem}, \B(\fP_{sem})\big)$ by
  \begin{equation*}
    m^P(C):= E^P \big[ \big(V\,\1_C \ast \mu^X\big)_\infty \big], \ \ \ C \in \cP\otimes\B(\R^d).
  \end{equation*}
  Note that each measure $m^P(\cdot)$ is a sub-probability. Consider the set
  \begin{align*}
    G&:=\big\{ (P,\omega) \in \fP_{sem} \times \Omega \, \big| \, t\mapsto A^P_t(\omega) \mbox{ is increasing}\big\}\\
       &\,= \bigcap_{s<t\in\Q} \big\{ (P,\omega) \in \fP_{sem} \times \Omega \, \big| \, A^P_s(\omega)<A^P_t(\omega)\big\};
  \end{align*}
  the second equality is due to the right-continuity of $A^P$ and shows that $G \in \B(\fP_{sem})\otimes\cF$. Moreover, the sections of $G$ satisfy
  \[
    P\{ \omega \in \Omega \, \big| \, (P,\omega) \in G\}=1,\quad P\in \fP_{sem}.
  \]
  Thus, the (everywhere increasing, but not $\F_+$-adapted) process
  \[
    \bar{A}^P_t(\omega):=A^P_t(\omega) \1_{G}(P,\omega)
  \]
  is $P$-indistinguishable from $A^P$ and in particular $\F^P_+$-predictable, while the map $(P,\omega,t)\mapsto \bar{A}^P_t(\omega)$ is again measurable.
  We define another finite kernel on  $(\Omega\times\R_+, \mathcal{P})$ given $\big(\fP_{sem}, \B(\fP_{sem})\big)$ by
  \begin{equation*}
  \widehat{m}^P(D)= E^P\bigg[\int_0^\infty \1_D(t,\omega) \, d\bar{A}^P_t(\omega)\bigg], \ \ \ D \in \mathcal{P}.
  \end{equation*}
  %note that $P\mapsto \widehat{m}^P(D)$ is indeed measurable due to the properties of $P\mapsto \bar{A}^P$  and Lemma~\ref{le:BorelcondexpPP}.
  As in the proof of \cite[Theorem II.1.8, p.\,67]{JacodShiryaev.03}, we have $\widehat{m}^P(D) =m^P(D\times \R^d)$ for any $D \in \mathcal{P}$; that is, $\widehat{m}^P(d\omega,dt)$ is the marginal of $m^P(d\omega,dt,dx)$ on $(\Omega\times\R_+, \mathcal{P})$.

  Since  $(\Omega\times\R_+,\P)$ is separable by Lemma~\ref{le:predictableSeparable}, we may apply the disintegration result of Lemma~\ref{le:decompkern} to obtain a stochastic kernel $\alpha^P(\omega,t,dx)$ on  $(\R^d,\mathcal{B}(\R^d))$ given
  $(\fP_{sem} \times \Omega\times\R_+, \B(\fP_{sem}) \otimes \mathcal{P})$  such that
  \begin{equation*}
    m^P(d\omega,dt,dx)=\alpha^P(\omega,t,dx) \,\widehat{m}^P(d\omega,dt).
  \end{equation*}
  Define a kernel $\widetilde{K}^P(\omega,t,dx)$ on $(\R^d,\mathcal{B}(\R^d))$ given
  $(\fP_{sem} \times \Omega\times\R_+, \mathcal{B}(\fP_{sem}) \otimes \mathcal{P})$  by
  \begin{equation}\label{eq:defKtildeKernel}
    \widetilde{K}^P(\omega,t,E) :=\int_E V(\omega,t,x)^{-1}\,\alpha^P(\omega,t,dx), \ \ \ E \in \B(\R^d).
  \end{equation}
  Moreover, let $\widetilde{\nu}^P(\omega,dt,dx):=  \widetilde{K}^P(\omega,t,dx) \,d\bar{A}^P_t(\omega)$ and define the set
  \begin{multline*}
    G':= \bigg\{ (P,\omega) \in G \, \Big| \, \int_0^N \int_{\R^d} |x|^2 \wedge 1 \, \widetilde{\nu}^P(\omega,dt,dx)<\infty\,\forall\, N \in \N, \\
   \; \widetilde{\nu}^P(\omega,\R_+,\{0\})= 0 = \widetilde{\nu}^P(\omega,\{0\},\R^d) \bigg\}.
  \end{multline*}
  We observe that  $G' \in \B(\fP_{sem})\otimes \cF$. Moreover, by \cite[Theorem~II.1.8, p.\,66]{JacodShiryaev.03} and its proof,
  \begin{equation}\label{eq:sectionsNullG2}
    P\{ \omega \in \Omega \, | \, (P,\omega) \in G'\}=1,\quad P\in \fP_{sem}.
  \end{equation}
  Define the kernel $K^P(\omega,t,dx)$ on $(\R^d,\mathcal{B}(\R^d))$ given
  $(\fP_{sem} \times \Omega\times\R_+, \mathcal{B}(\fP_{sem}) \otimes \cF\otimes\cB(\R_+))$  by
  \begin{equation*}
    K^P(\omega,t,E) :=\widetilde{K}^P(\omega,t,E) \, \1_{G'}(P,\omega), \ \ \ \ E \in \B(\R^d).
  \end{equation*}
  We see from~\eqref{eq:sectionsNullG2} that for fixed $P \in \fP_{sem}$, $K^P(\omega,t,dx)$ is also a kernel on $(\R^d,\mathcal{B}(\R^d))$ given $(\Omega\times\R_+,\P^P)$. Finally, we set
  \begin{equation*}
    \nu^P(\omega,dt,dx) := K^P(\omega,t,dx) \,d\bar{A}^P_t(\omega),
  \end{equation*}
  which clearly entails that $\nu^P(\cdot,dt,dx)= K^P(\cdot,t,dx) \,dA^P_t$ $P$-a.s.
  By construction, $\nu^P(\omega,dt,dx) \in \overline{\cL}$ for each $(P,\omega)\in \fP_{sem}\times \Omega$. Moreover, we deduce from \cite[Theorem~II.1.8, p.\,66]{JacodShiryaev.03} that $\nu^P(\omega,dt,dx)$ is the $P$-$\F^P_{+}$-compensator of $\mu^X$ for each $P \in \fP_{sem}$. It remains to show that $(P,\omega)\mapsto \nu^P(\omega,dt,dx)$ is measurable. By Lemma \ref{le:kernlevy}, it suffices to show that given a Borel function $f$ on $\R_+ \times \R^d$,  the map
  \[
    (P,\omega) \mapsto f(t,x)\ast \nu^P(\omega,dt,dx)
  \]
  is measurable. Suppose first that $f$ is of the form $f(t,x)=g(t)\,h(x)$, where $g$ and $h$ are measurable functions. Then
   \begin{align*}
  f(t,x)\ast \nu^P(\omega,dt,dx) &=  \int_0^\infty\int_{\R^d} f(t,x)\,K^P(\omega,t,dx)\,d\bar{A}^P_t(\omega)\\
   &=\int_0^\infty g(t) \, \int_{\R^d} h(x)\,K^P(\omega,t,dx)\,d\bar{A}^P_t(\omega)
  \end{align*}
  is measurable in $(P,\omega)$. The case of a general function $f$ follows by a monotone class argument, which completes the proof.
\end{proof}

The following standard fact was used in the preceding proof.

\begin{lemma}\label{le:mversioncomp}
  Let $S$ be a c\`adl\`ag, $\F_+$-adapted process. There exists a strictly positive $\mathcal{P}\otimes \mathcal{B}(\R^d)$-measurable function $V$ such that $0\leq V\ast \mu^S \leq 1$.
\end{lemma}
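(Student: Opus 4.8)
The plan is to construct $V$ explicitly, weighting each jump of $S$ by a factor that depends only on the \emph{size-class} of the jump and on the number of earlier jumps in that same size-class, arranged so that the weights of all jumps sum to at most $1$ along every path. The key realization is that for a general c\`adl\`ag path the quantity $\sum_{s\le t}(|\Delta S_s|^2\wedge 1)$ may be infinite, so $V$ cannot be chosen as a function of $x$ alone; instead I let it depend predictably on $(\omega,t)$ through a counting process, which is exactly what the predictable $\sigma$-field permits.

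First I would partition the jump sizes into layers. Set $L_0=\{x\in\R^d:|x|>1\}$ and $L_m=\{x\in\R^d:\frac1{m+1}<|x|\le\frac1m\}$ for $m\ge1$, so that the $L_m$ partition $\R^d\setminus\{0\}$. For each $m$ define the counting process
\[
  N^m_t(\omega):=\sum_{0<s\le t}\1_{L_m}\big(\Delta S_s(\omega)\big).
\]
Since $S$ is c\`adl\`ag, every interval $[0,t]$ contains only finitely many jumps of size exceeding $\frac1{m+1}$, so each $N^m$ is a finite, integer-valued, nondecreasing, c\`adl\`ag process; it is $\F_+$-adapted because $S$ is. Consequently the left-limit process $(\omega,t)\mapsto N^m_{t-}(\omega)$ (with $N^m_{0-}:=0$) is left-continuous and $\F_+$-adapted, hence $\cP$-measurable, using that $\cP$ is the predictable $\sigma$-field of both $\F$ and $\F_+$.

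Next I would set
\[
  V(\omega,t,x):=\1_{\{0\}}(x)+\sum_{m\ge0}\1_{L_m}(x)\,2^{-(m+1)}\,2^{-(N^m_{t-}(\omega)+1)}.
\]
For each $x\neq0$ exactly one indicator $\1_{L_m}(x)$ is nonzero and the corresponding summand is strictly positive, while the extra term $\1_{\{0\}}(x)$ makes $V$ strictly positive on $\{x=0\}$ as well; thus $V>0$ everywhere, and $V$ is $\cP\otimes\B(\R^d)$-measurable by the predictability established above. Since $\mu^S$ does not charge $\R_+\times\{0\}$ and $\Delta S_s\in L_m$ for a unique $m$ whenever $\Delta S_s\neq0$,
\[
  (V\ast\mu^S)_\infty=\sum_{m\ge0}2^{-(m+1)}\sum_{s:\,\Delta S_s\in L_m}2^{-(N^m_{s-}+1)}.
\]
For fixed $m$ the values $N^m_{s-}$ taken at the successive layer-$m$ jump times run through $0,1,2,\dots$, so the inner sum is at most $\sum_{j\ge0}2^{-(j+1)}=1$; therefore $0\le V\ast\mu^S\le\sum_{m\ge0}2^{-(m+1)}=1$, as required.

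The only genuine subtlety, and the step I would treat most carefully, is the predictability of $V$: it is essential to weight the jump at time $t$ by a factor built from $N^m_{t-}$ rather than $N^m_t$, since it is precisely the passage to the left limit that turns the adapted counting process into a predictable one. The finiteness of each $N^m_t$ (hence the well-definedness of the left limits and the convergence of the geometric sums) is guaranteed path by path by the c\`adl\`ag property, and everything else is a routine verification.
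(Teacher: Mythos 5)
Your proof is correct, and it rests on the same basic idea as the paper's --- an explicit $V$ that stratifies jumps by size and assigns geometrically decaying weights both in the size index and in the count of earlier large jumps --- but the implementation is genuinely different. The paper never counts jumps directly: it introduces stopping times $T_{n,m}$ recording the successive times at which the path \emph{moves} by more than $2^{-(n+1)}$, uses the nested sets $H_n=\{|x|>2^{-n}\}$, and takes $V$ to be a weighted sum of indicators of the predictable rectangles $G_{n,m}=[\![0,T_{n,m}]\!]\times H_n$. There, $\cP\otimes\cB(\R^d)$-measurability is immediate (stochastic intervals are predictable sets), and the bound $V\ast\mu^S\le 1$ follows from the observation that a jump of size $>2^{-n}$ can only occur at one of the times $T_{n,1},T_{n,2},\dots$, so $G_{n,m}$ carries at most $m$ atoms of $\mu^S$. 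Your construction instead counts the jumps themselves over a \emph{disjoint} partition of size layers and weights the jump at time $t$ by $2^{-(N^m_{t-}+1)}$. This buys a cleaner, exact estimate: each jump is weighted exactly once and the inner sum is a plain geometric series, with no need for the ``at most $m$ jumps before $T_{n,m}$'' argument. The price is that the measurability burden shifts to the counting processes: your one-line claim that $N^m$ is $\F_+$-adapted ``because $S$ is'' conceals the standard but not completely trivial fact that the successive times of jumps of size exceeding $1/(m+1)$ of a c\`adl\`ag, $\F_+$-adapted process are $\F_+$-stopping times, and that the jump sizes at those times are appropriately measurable (cf.\ e.g.\ \cite[Proposition~I.1.32]{JacodShiryaev.03}); this is precisely the step that the paper's oscillation-based stopping times render automatic. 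With that fact cited, the rest of your argument --- in particular the passage to the left limit $N^m_{t-}$ to obtain $\cP$-measurability (correctly identified as the crux), the pathwise finiteness of $N^m_t$ from the c\`adl\`ag property, and the summation giving $V\ast\mu^S\le 1$ --- is complete and correct.
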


\begin{proof}
  Let $H_n:=\{ x \in \R^d \, | \, |x|> 2^{-n} \}$ for $n\in\N$; then $\cup_{n} H_n = \R^d\setminus\{0\}$. Define $T_{n,0}=0$ and
  \begin{equation*}
  T_{n,m}:= \inf \big \{t \geq T_{n,m-1} \, \big | \, |S_t - S_{T_{n,m-1}}|>2^{-(n+1)} \big\}.
  \end{equation*}
  As $S$ is c\`adl\`ag, each $T_{n,m}$ is an $\F_{+}$-stopping time. Set $G_{n,0}:= \Omega\times\R_+ \times \{0\}$ and
  \begin{equation*}
  G_{n,m}:= [\![0,T_{n,m}]\!] \times  H_n \in \mathcal{P}\otimes \mathcal{B}(\R^d);
  \end{equation*}
  recall that the predictable $\sigma$-field associated with $\F_{+}$ coincides with $\cP$. Then, 
  $\cup_{n,m} G_{n,m}= \Omega\times\R_+ \times \R^d$ and
  \begin{equation*}
  V(\omega,t,x):= \sum_{n \geq 1} 2^{-n} \bigg(\1_{G_{n,0}}(\omega,t,x) + \sum_{m \geq 1} \frac{2^{-m}}{m} \1_{G_{n,m}}(\omega,t,x) \bigg)
  \end{equation*}
  has the required properties.
\end{proof}

The final goal of this section is to establish an aggregated version of the second characteristic; that is, a single process $C$ rather than a family $(C^P)_{P\in\fP_{sem}}$. By its definition, $C$ is the quadratic variation of the continuous local martingale part of $X$ under each $P\in\fP_{sem}$; however, the martingale part depends heavily on $P$ and thus would not lead to an aggregated process $C$. Instead, we shall obtain $C$ as the continuous part of the (optional) quadratic variation $[X]$ which is essentially measure-independent. For future applications, we establish two versions of $C$: one is $\F$-predictable but its paths are irregular on an exceptional set; the other one, denoted $\bar{C}$, has regular paths and is predictable for the augmentation of $\F$ by the collection of $\cF$-measurable $\fP_{sem}$-polar sets. More precisely, we let
\[
  \cN_{sem}=\{A\in \cF\,|\, P(A)=0\mbox{ for all }P\in\fP_{sem}\}
\]
and consider the filtration $\F\vee \cN_{sem}=(\cF_t\vee \cN_{sem})_{t\geq0}$. Note that this is still much smaller than the augmentation with all $\fP_{sem}$-polar sets (or even the $P$-augmentation for some $P\in\fP_{sem}$), because we are only adding sets already included in $\cF$. In particular, all elements of $\cF_t\vee \cN_{sem}$ are Borel sets and an $\F\vee \cN_{sem}$-progressively measurable process is automatically $\cF\otimes \cB(\R_+)$-measurable. For the purposes of the present paper, both versions are sufficient.

\begin{proposition}\label{prop:quadvarunabP}
  (i)  There exists an $\F$-predictable, $\S^d_+$-valued process $C$ such that
  \begin{equation*}
    C=\langle X^{c,P}\rangle^{(P)} \ \ \ P\mbox{-a.s.}\ \ \mbox{for all} \ \ P \in \fP_{sem},
  \end{equation*}
  where $X^{c,P}$ denotes the continuous local martingale part of $X$ under $P$ and $\langle X^{c,P} \rangle^{(P)}$ is its predictable quadratic variation under $P$. In particular, the paths of $C$ are $P$-a.s.\ increasing and continuous for all $P \in \fP_{sem}$.

  (ii) There exists an $\F\vee \cN_{sem}$-predictable, $\S^d_+$-valued process $\bar{C}$ with continuous increasing paths such that
  \begin{equation*}
    \bar{C}=\langle X^{c,P}\rangle^{(P)} \ \ \ P\mbox{-a.s.}\ \ \mbox{for all} \ \ P \in \fP_{sem}.
  \end{equation*}
\end{proposition}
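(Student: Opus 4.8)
The plan is to obtain both $C$ and $\bar C$ from the continuous part of the \emph{optional} quadratic variation of $X$, using that this object can be constructed pathwise and is therefore the same for every $P\in\fP_{sem}$, while only its identification with $\langle X^{c,P}\rangle^{(P)}$ involves $P$. First I would record the pathwise sum of squared jumps
\[
  J_t(\omega):=\sum_{0<s\le t}\Delta X_s(\omega)\,\Delta X_s(\omega)^\top,
\]
which, since $X$ is c\`adl\`ag, is $\S^d_+$-valued, $\F$-adapted with c\`adl\`ag increasing paths, finite on a Borel $\fP_{sem}$-full set, and measurable by Lemma~\ref{le:productm}. Next I would introduce the dyadic Riemann sums
\[
  Q^n_t(\omega):=\sum_{k\ge1}\big(X_{k2^{-n}\wedge t}-X_{(k-1)2^{-n}\wedge t}\big)\big(X_{k2^{-n}\wedge t}-X_{(k-1)2^{-n}\wedge t}\big)^\top,
\]
which are $\S^d_+$-valued, right-continuous, $\F$-adapted (hence measurable by Lemma~\ref{le:productm}) and which approximate $[X]_t$ under each $P$. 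I would then set $\widetilde C:=\limsup_{n}(Q^n-J)$ (componentwise), a single measure-independent $\F$-adapted process, and let $\Pi$ denote the measurable projection onto $\S^d_+$.

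The crux is to show that, $P$-a.s.\ for every $P\in\fP_{sem}$, $\widetilde C_t=[X]^c_t=\langle X^{c,P}\rangle^{(P)}_t\in\S^d_+$, where the second identity is the standard fact that the continuous part of the optional quadratic variation equals the predictable quadratic variation of the continuous local martingale part (see \cite[Proposition~I.4.52]{JacodShiryaev.03}). Since $[X]=[X]^c+J$, it suffices to prove that $Q^n_t\to[X]_t$ \emph{$P$-a.s.\ along the full dyadic sequence}, rather than merely in probability. After localizing $X$ by the stopping times used in Corollary~\ref{co:predFVm} to reduce to bounded, hence $L^2$, processes, this follows from a Borel--Cantelli argument: the dyadic mesh $2^{-n}$ is summable, so the $L^2(P)$-errors $\|Q^n_t-[X]_t\|_{L^2(P)}$ are summable and a.s.\ convergence holds. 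Because $\widetilde C$ is defined pathwise, the very same process realizes this limit simultaneously under all (possibly mutually singular) $P\in\fP_{sem}$; this measure-independence is exactly the main obstacle, and it is what makes aggregation into a single $C$ possible. For part~(i) it then remains to produce an $\F$-predictable representative: $\widetilde C$ is $P$-a.s.\ continuous and increasing, so I would pass to the left-continuous, $\F_-$-adapted regularization $C_t:=\Pi\big(\liminf_{s\uparrow t,\,s\in\Q}\widetilde C_s\big)$ (with $C_0:=0$), which is $\F$-predictable, $\S^d_+$-valued, and agrees with $\widetilde C$ wherever the paths are continuous, i.e.\ $P$-a.s.; this gives $C=\langle X^{c,P}\rangle^{(P)}$ $P$-a.s.\ for all $P$, with $P$-a.s.\ continuous increasing paths.

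For part~(ii) I would localize the exceptional set. Let
\[
  \Gamma:=\big\{\omega\,\big|\,t\mapsto C_t(\omega)\ \text{is continuous and increasing in }\S^d_+\big\}.
\]
By the previous paragraph $\Gamma$ is Borel with $P(\Gamma)=1$ for all $P\in\fP_{sem}$, so $\Omega\setminus\Gamma\in\cN_{sem}$ and $\1_\Gamma$ is $\cN_{sem}$-measurable. Setting $\bar C:=C\,\1_\Gamma$ yields a process all of whose paths are continuous and increasing in $\S^d_+$ (it equals $C$ on $\Gamma$ and vanishes off $\Gamma$); being continuous and $\F\vee\cN_{sem}$-adapted, it is $\F\vee\cN_{sem}$-predictable, and $\bar C=C=\langle X^{c,P}\rangle^{(P)}$ $P$-a.s.\ for every $P$.

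In summary, the genuinely delicate point is the second paragraph: securing that one pathwise-defined limit recovers the quadratic variation $P$-a.s.\ simultaneously for every $P\in\fP_{sem}$ (via a.s., not just ucp, convergence along the nested partitions after localization). Once this measure-independence is in hand, both the $\F$-predictable version $C$ with irregular exceptional paths and the $\F\vee\cN_{sem}$-predictable version $\bar C$ with everywhere-regular paths follow from the elementary left-continuous regularization and the cut-off by $\Gamma$ described above.
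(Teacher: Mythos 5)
The overall architecture of your proposal matches the paper's: build a single, measure-independent version of the optional quadratic variation $[X]$, subtract the pathwise jump sum, identify the continuous part with $\langle X^{c,P}\rangle^{(P)}$ via \cite[Theorem~4.52, p.\,55]{JacodShiryaev.03}, then produce the $\F$-predictable version by a left-limit regularization and the $\F\vee\cN_{sem}$-predictable version by cutting off a polar set. However, the step you yourself flag as the crux contains a genuine gap. You approximate $[X]$ by Riemann sums $Q^n$ along \emph{deterministic dyadic time partitions} and claim that, after localization, the errors $\Vert Q^n_t-[X]_t\Vert_{L^2(P)}$ are summable ``because the dyadic mesh $2^{-n}$ is summable.'' This implication is false for general c\`adl\`ag semimartingales. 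By It\^o's formula the error is $Q^n_t-[X]_t=2\sum_k\int_{t_k\wedge t}^{t_{k+1}\wedge t}(X_{s-}-X_{t_k})\,dX_s$, so any $L^2$ bound is governed by the size of $X_{s-}-X_{t_k}$ on the partition intervals; with a deterministic time grid this quantity is the path oscillation over the interval, which is \emph{not} small (jumps of order one can occur in any fixed interval), and even for continuous martingales a time change can concentrate all the variation in an arbitrarily short interval. What the general theory gives along deterministic partitions is only convergence in probability (ucp), and upgrading to a.s.\ convergence requires a subsequence that depends on $P$ --- which is exactly the measure dependence you must avoid in order to aggregate. (The summable-rate computation you have in mind is the classical one for Brownian motion; it does not survive for general $X$. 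Your localization by the stopping times of Corollary~\ref{co:predFVm} also does not repair this: boundedness of $X$ gives no rate, and it does not even make $[X]$ integrable, since the finite-variation part contributes $\mathrm{Var}(A)^2$-type terms.)

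This is precisely why the paper does not use time partitions: it uses Bichteler/Karandikar pathwise integration, with stopping times $\tau^n_{l+1}=\inf\{t\geq\tau^n_l\,|\,|X^i_t-X^i_{\tau^n_l}|\geq 2^{-n}\mbox{ or }|X^i_{t-}-X^i_{\tau^n_l}|\geq 2^{-n}\}$ defined by \emph{spatial} $2^{-n}$-crossings of the path. By construction the integrand approximation error satisfies $|X^i_{s-}-X^i_{\tau^n_l}|\leq 2^{-n}$ uniformly, so the Burkholder--Davis--Gundy inequalities yield an $L^2$ (or $L^1$) error bound of order $2^{-n}$ after localization; this \emph{is} summable, Borel--Cantelli applies, and one gets locally uniform a.s.\ convergence of the same pathwise-defined sequence $I^n$ under every $P\in\fP_{sem}$ simultaneously --- the key measure-independence. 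If you replace your dyadic sums $Q^n$ by these crossing-time sums (equivalently, construct $\int X^i_-\,dX^j$ pathwise and use integration by parts, as in the paper), the rest of your argument (subtracting the jump sum, the left-liminf regularization for (i), and the cut-off by the polar set $\Gamma$ for (ii)) goes through essentially as you wrote it, up to one minor point: you should first restrict to the Borel set where the approximating sequence is Cauchy for the locally uniform metric, so that the aggregated process has c\`adl\`ag paths everywhere; otherwise the measurability of your set $\Gamma$ (continuity of paths of a non-regular process) requires a separate justification.
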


\begin{proof}
  We begin with (ii). As a first step, we show that there exists an $\F\vee \cN_{sem}$-optional process $[X]$ with values in $\S^d_+$, having all paths c\`adl\`ag and of finite variation, such that
  \begin{equation*}
    [X]=[X]^{(P)} \ \ \ P\mbox{-a.s.}\ \ \mbox{for all} \ \ P \in \fP_{sem},
  \end{equation*}
  where $[X]^{(P)}$ is the usual quadratic covariation process of $X$ under $P$. To this end, we first apply Bichteler's pathwise integration \cite[Theorem~7.14]{Bichteler.81}, see also~\cite{Karandikar.95} for the same result in modern notation, to $\int X^i_-\,dX^j$, for fixed $1\leq i,j\leq d$. This integration was also used in \cite{NutzSoner.10,SonerTouziZhang.2010bsde,SonerTouziZhang.2010dual} in the context of continuous martingales; however, we have to elaborate on the construction to find a Borel-measurable version.

  Define for each $n\geq1$ the sequence $\tau^n_0:=0$,
  \[
    \tau^n_{l+1}:=\inf\big\{t\geq \tau^n_l\,\big|\, |X^i_t-X^i_{\tau^n_l}|\geq 2^{-n}\mbox{ or }|X^i_{t-}-X^i_{\tau^n_l}|\geq 2^{-n}\big\},\quad l\geq0.
  \]
  Since $X$ is c\`adl\`ag, each $\tau^n_l$ is an $\F$-stopping time and $\lim_l \tau^n_l(\omega)=\infty$ holds for all $\omega\in\Omega$.
  In particular, the processes defined by
  \[
    I^n_t:= X^i_{\tau^n_k}\big(X^j_t-X^j_{\tau^n_k}\big) + \sum_{l=0}^{k-1} X^i_{\tau^n_l}\big(X^j_{\tau^n_{l+1}}-X^j_{\tau^n_l}\big)
    \quad\mbox{for}\quad\tau^n_k< t\leq\tau^n_{k+1},\quad k\geq0
  \]
  are $\F$-adapted and c\`adl\`ag, thus $\F$-optional. Finally, we define
  \[
    I_t(\omega):=\limsup_{n\to\infty} I^n_t(\omega);
  \]
   then $I$ is again $\F$-optional. Moreover, it is a consequence of the Burkholder-Davis-Gundy inequalities that
  \begin{equation}\label{eq:BichtelerConv}
    \sup_{0\leq t\leq N}\bigg| I^n_t-\sideset{^{(P)\hspace{-7pt}}}{}{\int_0^t}X^i_{s-}\,dX^j_s\bigg|\to 0\quad P\as,\quad N\geq1
  \end{equation}
  for each $P\in\fP_{sem}$, where the integral is the usual It\^o integral under $P$. For two c\`adl\`ag functions $f,g$ on $\R_+$, let
  \[
    d(f,g)=\sum_{N\geq1} 2^{-N} (1\wedge \|f-g\|_N),
  \]
  where $\|\cdot\|_N$ is the uniform norm on $[0,N]$. Then $d$ metrizes locally uniform convergence and a sequence of c\`adl\`ag functions is $d$-convergent if and only if it is $d$-Cauchy. Let
  \[
    G=\{\omega\in \Omega\,|\, I^n(\omega)\mbox{ is $d$-Cauchy}\}.
  \]
  It is elementary to see that $G\in\cF$. Since~\eqref{eq:BichtelerConv} implies that $P(G)=1$ for all $P\in\fP_{sem}$, we conclude that the complement of $G$ is in $\cN_{sem}$. On the other hand, we note that the $d$-limit of a sequence of c\`adl\`ag functions is necessarily c\`adl\`ag. Hence,
  \[
    J^{ij}:=I\1_{G}
  \]
  defines an $\F\vee\cN_{sem}$-optional process with c\`adl\`ag paths. Define the $\R^{d\times d}$-valued process $Q=(Q^{ij})$ by
  \[
    Q^{ij}:=X^iX^j - J^{ij}-J^{ji}.
  \]
  Then $Q^{ij}=X^iX^j - {}^{(P)\hspace{-5pt}}\int X^i_-\,dX^j - {}^{(P)\hspace{-5pt}}\int X^j_-\,dX^i = ([X]^{(P)})^{ij}$ holds $P$-a.s.\ for all $P\in\fP_{sem}$; this is simply the integration-by-parts formula for the It\^o integral. In particular, $Q$ has increasing paths in $\S^d_+$ $P$-a.s.\ for all $P\in\fP_{sem}$. Since $Q$ is c\`adl\`ag, the set $G'=\{\omega\in\Omega\,|\, Q(\omega)$ is increasing in $\S^d_+\}$ is $\cF$-measurable and we conclude that
  \[
    [X]:=Q\1_{G'}
  \]
  is an $\F\vee\cN_{sem}$-optional process having c\`adl\`ag, increasing paths and satisfying $[X]=[X]^{(P)}$ $P$-a.s.\ for all $P\in\fP_{sem}$.

  The second step is to construct $\bar{C}$ from $[X]$. Recall that a c\`adl\`ag function $f$ of finite variation can be (uniquely) decomposed into the sum of a continuous part $f^c$ and a discontinuous part $f^d$; namely,
  \begin{equation*}
    f^d_t:= \sum_{0\leq s \leq t} (f_s-f_{s-}), \ \ \ \ f^c_t:=f_t-f^d_t,
  \end{equation*}
  where $f_{0-}:=0$. Since all paths of $[X]$ are c\`adl\`ag and of finite variation, we can define $\bar{C}:=[X]^c$. Then $\bar{C}$ is $\F\vee \cN_{sem}$-optional (e.g., by \cite[Proposition~1.16, p.\,69]{JacodShiryaev.03}), $\bar{C}_0=0$ and all paths of $\bar{C}$ are increasing in $\S^d_+$ and continuous. Hence, $\bar{C}$ is also $\F\vee \cN_{sem}$-predictable.
  Let $P\in\fP_{sem}$ and recall (see \cite[Theorem~4.52, p.\,55]{JacodShiryaev.03}) that
  \begin{equation*}
    [X]^{(P)} =\langle X^{c,P}\rangle^{(P)}  + \sum_{0\leq s \leq \cdot} (\Delta X_s)^2\quad P\as
  \end{equation*}
  By the uniqueness of this decomposition, we have that $\bar{C}=\langle X^{c,P}\rangle^{(P)}$ $P$-a.s., showing that $\bar{C}$ is indeed a second characteristic of $X$ under $P$.

  For the $\F$-predictable version~(i), we construct $[X]$ as above but with $G=G'=\Omega$; then $[X]$ is $\F$-optional (instead of $\F\vee\cN_{sem}$-optional) while lacking the path properties. On the other hand, all paths of $X$ are c\`adl\`ag and hence the process
  \[
     C' := [X] - \sum_{0\leq s \leq \cdot} (\Delta X_s)^2
  \]
  is well-defined and $\F$-optional. Next, define $C'_0:=0$ and (componentwise)
  \[
    C''_t:=\limsup_{n\to\infty} C'_{t-1/n},\quad t>0;
  \]
  then $C''$ is $\F$-predictable. Finally, the process $C:=C''\1_{C''\in\S^d_+}$ has the required properties, because for given $P\in\fP_{sem}$ the paths of $C'$ are already continuous $P$-a.s.\ and thus $C=C'=C''=\langle X^{c,P}\rangle^{(P)}$ $P$-a.s.
\end{proof}

\section{Differential Characteristics}\label{se:diffCharacteristics}

In this section, we prove Theorem~\ref{th:diffCharactMbl} and its corollary. The conditions of Theorem~\ref{th:diffCharactMbl} (which are the ones of Theorem~\ref{th:charactMbl}) are in force.
We recall the set of semimartingale measures under which $X$ has absolutely continuous characteristics,
\[
  \fP^{ac}_{sem}=\big\{P\in \fP_{sem}\,\big|\, \mbox{$(B^P,C,\nu^P)\ll dt$, $P$-a.s.}\big\}.
\]

\begin{lemma}
  The set $\fP^{ac}_{sem}\subseteq \fP(\Omega)$ is measurable.
\end{lemma}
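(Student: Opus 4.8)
The plan is to reduce the absolute continuity of the whole triplet to that of a single scalar increasing process, and then to characterize absolute continuity of the latter by a countable family of measurable conditions. Using the objects produced in Theorem~\ref{th:charactMbl}---the first characteristic $B^P$ (Corollary~\ref{co:predFVm}), the common second characteristic $C$ (Proposition~\ref{prop:quadvarunabP}) and the increasing process $A^P$ from the decomposition $\nu^P=K^P\,dA^P$ (Proposition~\ref{prop:mversioncomp})---I would set
\[
  \check{A}^P_t := \Var(B^P)_t + \tr(C_t) + A^P_t.
\]
Since $B^P$, $C$ and $A^P$ are jointly measurable in $(P,\omega,t)$ and $\Var(B^P)$ is a dyadic $\limsup$ exactly as in the proof of Corollary~\ref{co:Doobmeyer}, the map $(P,\omega,t)\mapsto \check A^P_t(\omega)$ is measurable; moreover $\check A^P$ is right-continuous with $\check A^P_0=0$, and $P$-a.s.\ increasing and finite for every $P\in\fP_{sem}$. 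The key claim is the equivalence
\[
  (B^P,C,\nu^P)\ll dt\ \ P\as \quad\Longleftrightarrow\quad \check A^P\ll dt\ \ P\as.
\]

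For this equivalence I would argue componentwise. Absolute continuity of $B^P$ is equivalent to that of $\Var(B^P)$, and since $C$ is $\S^d_+$-valued and increasing, the Kunita--Watanabe inequality $|C^{ij}_t-C^{ij}_s|\le ((C^{ii}_t-C^{ii}_s)(C^{jj}_t-C^{jj}_s))^{1/2}$ shows that $C\ll dt$ is equivalent to $\tr(C)\ll dt$. For the jump part I would use the two structural relations from the construction in Proposition~\ref{prop:mversioncomp}: on the one hand $\nu^P=K^P\,dA^P$, so $A^P\ll dt$ forces $\nu^P\ll dt$; on the other hand $A^P$ is the $P$-$\F^P_+$-compensator of $V\ast\mu^X$ (equivalently $\int_{\R^d}V(t,x)\,K^P(t,dx)=1$), whence $A^P=V\ast\nu^P$ $P$-a.s., so that $\nu^P\ll dt$ forces $A^P\ll dt$. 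Thus $\nu^P\ll dt\Leftrightarrow A^P\ll dt$, and since a finite sum of increasing processes is $\ll dt$ iff each summand is, the displayed equivalence follows.

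It then remains to establish a measurable version of Lebesgue's criterion: for a jointly measurable family $\check A^P$ as above, the set $\{P\in\fP_{sem}\mid \check A^P\ll dt\ P\as\}$ is measurable. I would define a measurable density
\[
  D^P_t:=\liminf_{n\to\infty} 2^n\big(\check A^P_{t+2^{-n}}-\check A^P_t\big),
\]
which is jointly measurable as a countable $\liminf$ of measurable functions. At every point of differentiability $D^P_t$ equals the pointwise derivative of $t\mapsto\check A^P_t$, so by the differentiation theorem for monotone functions $D^P$ agrees $dt$-a.e.\ with the density of the absolutely continuous part of $\check A^P$; together with Fatou's lemma and right-continuity this gives $\int_0^N D^P_s\,ds\le \check A^P_N<\infty$, with equality for all $N$ if and only if $\check A^P$ has no singular part, i.e.\ $\check A^P\ll dt$. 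Hence the nonnegative random variable $R^P_N:=\check A^P_N-\int_0^N D^P_s\,ds\ge 0$ (the singular mass on $[0,N]$) vanishes $P$-a.s.\ for all $N$ precisely when $\check A^P\ll dt$ $P$-a.s. Using a bounded strictly increasing $\Phi$ with $\Phi(0)=0$ (say $\Phi(x)=1\wedge x$) to sidestep integrability of $\check A^P_N$, this reads
\[
  \fP^{ac}_{sem}=\bigcap_{N\in\N}\Big\{P\in\fP_{sem}\;\Big|\; E^P\big[\Phi(R^P_N)\big]=0\Big\}.
\]
Each set on the right is measurable by Lemma~\ref{le:BorelcondexpPP} (joint measurability of $(P,\omega)\mapsto R^P_N(\omega)$ via Fubini, then measurability of $P\mapsto E^P[\,\cdot\,]$), and $\fP_{sem}$ is measurable by Corollary~\ref{co:P_sem}; therefore $\fP^{ac}_{sem}$ is measurable.

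The main obstacle is the first equivalence: one needs the single process $\check A^P$ to be \emph{both} dominated by and dominating the three characteristics in the sense of absolute continuity, and the only delicate point is matching $\nu^P$ with a scalar process in both directions. This is exactly what forces the use of the two identities $\nu^P=K^P\,dA^P$ and $A^P=V\ast\nu^P$ coming from the explicit compensator construction; once these are available, the rest is the routine measurable rendering of Lebesgue's absolute-continuity criterion.
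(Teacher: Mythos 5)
Your proposal is correct and follows essentially the same route as the paper: the paper likewise aggregates everything into a single scalar increasing process $R^P:=\sum_i\Var(B^{P,i})+\sum_{i,j}\Var(C^{ij})+|A^P|$, builds a jointly measurable candidate density via dyadic difference quotients (citing Dellacherie--Meyer, Theorem~V.58, where you invoke the Lebesgue differentiation theorem for monotone functions), characterizes $\fP^{ac}_{sem}$ by a countable family of $P$-a.s.\ identities, and concludes with Lemma~\ref{le:BorelcondexpPP}. The only differences are cosmetic --- $\tr(C)$ with the Kunita--Watanabe bound instead of componentwise variations of $C$, and your explicit derivation of the equivalence $\nu^P\ll dt\Leftrightarrow A^P\ll dt$ via $A^P=V\ast\nu^P$, which the paper leaves implicit in the compensator construction.
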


\begin{proof}
  Let $(B^P, C, \nu^P)$ and $A^P$ be as stated in Theorem~\ref{th:charactMbl}.
  For all $P \in \fP_{sem}$, let $R^P$ be the $[0,\infty]$-valued process
  \begin{equation*}
  R^P:= \sum_{1\leq i\leq d}\Var(B^{P,i}) + \sum_{1\leq i,j\leq d}\Var(C^{ij}) + |A^P|,
  \end{equation*}
  where the indices $i,j$ refer to the components of the $\R^d$- and $\R^{d\times d}$-valued processes $B^P$, $C$ and
  \[
    \Var(f)_t:=  \lim_{n \to \infty} \sum_{k=1}^{2^n}  \big|f_{kt/2^n}-f_{(k-1)t/2^n}\big|
  \]
  for any real function $f$ on $\R_+$. (If $f$ is right-continuous, this is indeed the total variation up to time $t$, as the notation suggests.) This definition and the properties stated in Theorem~\ref{th:charactMbl} imply that $(P,\omega,t)\mapsto R^P_t(\omega)$ is measurable and that for each $P \in \fP_{sem}$, $R^P$ is finite valued $P$-a.s.\ and has $P$-a.s.\ right-continuous paths. Moreover, we have $P$-a.s.\ that (componentwise)
  \[
    dA^P \ll dR^P, \quad dB^P \ll dR^P \quad  \mbox{and} \quad dC \ll dR^P.
  \]
  Let
  \begin{equation*}
  \varphi^{P,n}_t(\omega)
  := \sum_{k\geq0} 2^n \big(R^P_{(k+1)2^{-n}}(\omega) - R^P_{k2^{-n}}(\omega)\big)
  \,\1_{(k 2^{-n}, (k+1) 2^{-n}]}(t)
  \end{equation*}
  for all $(P,\omega,t) \in \fP_{sem}\times\Omega\times\R_+$ and
  \begin{equation*}
  \varphi^{P}_t(\omega):= \limsup_{n \to \infty} \varphi^{P,n}_t(\omega), \ \ \ \ (P,\omega,t) \in \fP_{sem}\times\Omega\times\R_+.
  \end{equation*}
  Clearly $(P,\omega,t)\mapsto \varphi^{P}_t(\omega)$ is measurable. Moreover, $\varphi^P$ is $P$-a.s.\ the density of the absolutely continuous part of $R^P$ with respect to the Lebesgue measure; cf.\ \cite[Theorem V.58, p.\,52]{DellacherieMeyer.82} and the subsequent remark. That is, there is a decomposition
  $
  R^P_t(\omega)= \int_0^t \varphi^{P}_s (\omega) \,ds + \psi^P_t(\omega)$, $t\in\R_+$
  for $P$-a.e.\ $\omega \in \Omega$, with a function $t\mapsto \psi^P_t(\omega)$ that is singular with respect to the Lebesgue measure.
  In particular, $\fP^{ac}_{sem}$ can be characterized as
  \begin{equation*}
    \fP^{ac}_{sem}=\big\{ P \in \fP_{sem} \, \big| \, E^P[\1_{G}(P,\cdot)] =1 \big\}
  \end{equation*}
  with the set
  \begin{equation*}
    G:= \bigg\{ (P,\omega) \in \fP_{sem} \times \Omega \, \bigg| \, R^P_t(\omega) = \int_0^t \varphi^{P}_s(\omega) \,ds \ \mbox{for all } t \in \Q_{+} \bigg\}.
  \end{equation*}
  As $G$ is measurable, we conclude by Lemma~\ref{le:BorelcondexpPP} that $\fP^{ac}_{sem}$ is measurable.
\end{proof}

Next, we prove the remaining statements of Theorem~\ref{th:diffCharactMbl}.

\begin{proof}[Proof of Theorem~\ref{th:diffCharactMbl}]
  Let $B^P,C,\nu^P, K^P, A^P$ be as in Theorem~\ref{th:charactMbl} and let $P\in \fP^{ac}_{sem}$. Let
  \[
    \widehat{A}^P_t:= \limsup_{n\to\infty} A^P_{(t-1/n)\vee 0};
  \]
  then $\widehat{A}^P$ is $\F_{-}$-adapted and hence $\F$-predictable. Moreover, since we know a priori that
  $A^P$ has continuous paths $P$-a.s., we have $\widehat{A}^{P}=A^P$ $P$-a.s. Consider
  \[
    \widetilde{a}^P_t:= \limsup_{n\to\infty} n (\widehat{A}^P_{t} - \widehat{A}^P_{(t-1/n)\vee 0}).
  \]
  If we define $a^P:=\widetilde{a}^P\1_{\R_+}(\widetilde{a}^P)$, then $(P,\omega,t)\mapsto a^P_t(\omega)$ is measurable and $a^P$ is an $\F$-predictable process for every $P\in\fP^{ac}_{sem}$. Moreover, since $A^P_t\ll dt$ $P$-a.s., we also have $a^P_t\,dt=dA_t^P$ $P$-a.s.
  We proceed similarly with $B^P$ and $C$ to define processes $b^P$ and $c$ with values in $\R^d$ and $\S^d_+$, respectively, having the properties stated in Theorem~\ref{th:diffCharactMbl}.

  Let $\widetilde{K}^P(\omega,t,dx)$ be the $\mathcal{B}(\fP_{sem}) \otimes \mathcal{P}$-measurable kernel
  from~\eqref{eq:defKtildeKernel} and let
  $\widetilde{F}^P_{\omega,t}(dx)$ be the  kernel on $\R^d$ given $\fP^{ac}_{sem}\times \Omega \times\R_+$
  defined by
  \begin{equation*}
    \widetilde{F}^P_{\omega,t}(dx):= \widetilde{K}^P(\omega,t,dx)\,a^P_t(\omega).
  \end{equation*}
  It follows from Fubini's theorem that $\widetilde{F}^P_{\omega,t}(dx) \in \cL$ holds $P\times dt$-a.e.\ for all $P \in \fP^{ac}_{sem}$. To make this hold everywhere, let
  \begin{equation*}
  G= \Big\{ (P,\omega,t) \in \fP^{ac}_{sem}\times \Omega \times \R_+ \, \Big|  \int_{\R^d} |x|^2\wedge 1 \, \widetilde{F}^P_{\omega,t}(dx)<\infty \mbox{ and } \widetilde{F}^P_{\omega,t}(\{0\})=0 \Big\}.
  \end{equation*}
  Then $G \in \B(\fP^{ac}_{sem})\otimes \cF\otimes B(\R_+)$ and the complements of its sections,
  \[
    G^{P}:=\big\{(\omega,t) \in \Omega \times \R_+ \, \big| \, (P,\omega,t) \notin G \big\},
  \]
  satisfy
  \[
    G^{P}\in\cP   \quad\mbox{and}\quad (P\otimes dt)(G^{P})=0,\quad P \in \fP^{ac}_{sem}.
  \]
  Thus, if we define the kernel $F^P_{\omega,t}(dx)$ on $\R^d$ given $\fP^{ac}_{sem}\times\Omega\times\R_+$
  by
  \begin{equation*}
    F^P_{\omega,t}(E):= \1_{G}(P,\omega,t)\,\widetilde{F}^P_{\omega,t}(E), \quad E \in \B(\R^d);
  \end{equation*}
  then $F^P_{\omega,t}(dx) \in \cL$ for all $(P,\omega,t) \in \fP^{ac}_{sem}\times \Omega \times\R_+$, while
  $(\omega,t)\mapsto F^P_{\omega,t}(dx)$ is a kernel on $(\R^d,\B(\R^d))$ given $(\Omega\times\R_+, \P)$ for all $P \in \fP^{ac}_{sem}$ and
  \[
    F^P_{\omega,t}(dx) \,dt = \widetilde{K}^P(\omega,t,dx)\,dA^P_t(\omega)=K^P(\omega,t,dx)\,dA^P_t(\omega)=\nu^P(\omega,dt,dx)
  \]
  $P$-a.s.\ for all $P \in \fP^{ac}_{sem}$. Moreover, $(P,\omega,t) \mapsto \int_{E} |x|^2 \wedge 1 \, F^P_{\omega,t}(dx)$ is measurable for any $E \in \B(\R^d)$. Thus, by Lemma \ref{le:kernlevy}, the map $(P,\omega,t) \mapsto F^P_{\omega,t}(dx)$ is measurable with respect to $\cB(\cL)$. Finally, it is clear from the construction that $(b^P,c,F^P)$ are indeed differential characteristics of $X$ under $P$ for all $P\in \fP^{ac}_{sem}$.
\end{proof}

It remains to prove the measurability of the sets $\fP^{ac}_{sem}(\Theta)$.

\begin{proof}[Proof of Corollary~\ref{co:PThetaMbl}]
  Let $\Theta\subseteq \R^d \times \S^d_+\times\cL$ be a Borel set and let $(b^P,c,F^P)$ be a measurable version of the differential characteristics for $P\in\fP^{ac}_{sem}$ as in Theorem~\ref{th:diffCharactMbl}; then
  \[
    G:=\big\{(P,\omega,t)\,\big|\,(b^P_t,c_t,F^P_t)(\omega)\notin \Theta\big\}\,\in\, \cB(\fP^{ac}_{sem})\otimes \cF \otimes \cB(\R_+).
  \]
  Thus, by Fubini's theorem, $G':=\{(P,\omega)\,|\,\int_0^\infty \1_G(P,\omega,t)\,dt=0\}$ is again measurable. Since $G'$ consists of all $(P,\omega)$ such that $(b^P_t,c_t,F^P_t)(\omega)\in \Theta$ holds $P\otimes dt$-a.e., we have
  \begin{equation*}
    \fP^{ac}_{sem}(\Theta)= \big\{ P \in \fP^{ac}_{sem} \, \big| \, E^P[\1_{G'}(P,\cdot)]=1 \big\},
  \end{equation*}
  and the set on the right-hand side is measurable due to Lemma~\ref{le:BorelcondexpPP}.
\end{proof}

%\bibliography{stochfin}
%\bibliographystyle{plain}

\newcommand{\dummy}[1]{}

\end{document}